\documentclass{amsart} 
\usepackage{amssymb}
\usepackage{amsmath}
\usepackage{stmaryrd} 
\usepackage{xcolor}

\textheight 8in
\textwidth 5.5 in
\voffset -0.3in
\hoffset -0.6in

\setcounter{MaxMatrixCols}{10}

\newtheorem{theorem}{Theorem}[section]

\newtheorem{lemma}[theorem]{Lemma}

\newtheorem{proposition}[theorem]{Proposition}
\newtheorem{remark}[theorem]{Remark}

\numberwithin{equation}{section}


\begin{document}

\title{Conformal Bach flow}

\author{Jiaqi Chen, Peng Lu, and Jie Qing} 
\date{}

\begin{abstract}
In this article we introduce conformal Bach flow and establish its well-posedness on closed manifolds. 
We also obtain its backward uniqueness. To give an attempt to study the long-time behavior of conformal 
Bach flow, assuming that the curvature and the pressure function are bounded, global and local
Shi's type $L^2$-estimate  of derivatives of curvatures are derived. Furthermore using the 
$L^2$-estimate and based on an idea from \cite{St13} 
we show Shi's pointwise-estimate of derivatives of curvatures
without assuming Sobolev constant bound.
\end{abstract}

\keywords{Conformal Bach flow, short-time existence, backward uniqueness, Shi's type $L^2$ and 
pointwise-estimate of derivatives of curvatures, and finite-time singularity}

\subjclass[2000]{Primary 53C43; Secondary 35K41, 58J35}

\address{Jiaqi Chen, School of Math Sciences, Xiamen University, Xiamen, Fujian 361005, China}
\email{chenjiaqi@xmu.edu.cn}

\address{Peng Lu, Dept of Math, University of Oregon, Eugene, OR 97403, USA}
 \email{penglu@uoregon.edu}
 
\address{Jie Qing, Dept of Math, University of California, Santa Cruz, CA 95064, USA}
\email{qing@ucsc.edu}

\thanks{P.L. is partially supported by Simons Foundation through Collaboration Grant  229727.
J.Q. is partially supported by NSF DMS-1608782.}

\maketitle

\section{Introduction}  \label{sect introduction}

It is well known in the field of conformal geometry that it is very desirable to make use of Bach tensor 
(cf. (\ref{eq def Bach tensor}) below) to generate curvature flow. One obvious reason is that Bach tensor 
is the $L^2$-gradient of Weyl curvature in dimension 4. 
Note that some versions of 
Bach flows have been proposed, for example, in \cite[(7.7)]{BB10}. In 2011 Bahuaud and Helliwell 
(\cite[\S 5]{BH11}) found a well-posed curvature flow by Bach tensor as follows: 
 \begin{equation} \label{eq Bauquad Hell Bach f}
 \partial_t g = B + \frac{1}{2(n-1)(n-2)} (\Delta S) g,
 \end{equation}
where $B=B(g)$ is the Bach tensor, $S=S_g$ is the scalar curvature, and $\Delta=\Delta_g$ is the 
Laplace-Beltrami operator on a manifold of dimensions $n$.
 The additional term $(\Delta S) g$ with the appropriate coefficient is important
  and needed for the well-posedness of the flow.

Ricci flow has become the highlight of achievements in geometric analysis since tremendous
 developments of the use of Ricci flow reignited by Perelman in the early 2000 unfolded.
  One thing we would like to mention is conformal Ricci flow (CRF in short), which was proposed
   by Fischer (\cite{Fi04}) as a modified Ricci flow that keeps the scalar curvature as a unchanged 
   constant by adding a pressure term as follows:
\[
\partial_t g  =- 2 \left (\operatorname{Rc}_{g} - \frac{s_0}{n} \right )g  - 2pg.
\]
This resembles the Navier-Stokes equations in fluid dynamics and hopefully it produces more efficient
 approach to find Einstein metrics if possible. Particularly CRF seems to be transverse
  to classes of conformal metrics, which attracts attentions from researchers in conformal geometry 
  (cf. \cite{FM02, Fi04, LQZ14, SZ18, LQZ19}). 

Inspired by the idea behind CRF, here we propose the following conformal Bach flow
(CBF in short).  We always assume dimension $n \geq 4$ below.
 Suppose that $(M^n,g_0)$ is an $n$-dimensional  Riemann manifold with constant 
scalar curvature $s_0$. CBF is a family of metrics $\{ g(t) \}_{t \in [0, T]}$ 
which satisfies
\begin{equation} \label{eq cbf ist time scalar const}
    \begin{cases}
    \partial_t g = 2(n-2) \left ( B(g) + pg \right ) \quad \text{on}~ M \times [0,T], \\
    S_{g(t)}= s_0 \quad \text{for }~  t \in [0,T],
    \end{cases}
\end{equation}
where $p=p(t)$ is a family of functions on $M$. This is a fourth order evolution equation
of $g(t)$. 
The pressure function $p(t)$ at each time induces the conformal change to keep the scalar 
curvature as a unchanged constant. Notice that, in contrast to \eqref{eq Bauquad Hell Bach f}, 
the term $(\Delta  S)g$ does not make any appearance in \eqref{eq cbf ist time scalar const}. 
Similar to CRF, the system (\ref{eq cbf ist time scalar const}) is equivalent to the following 
coupled weakly-parabolic and elliptic equations:
\begin{equation} \label{eq cbf eq p ellipt}
    \begin{cases}
    \partial_t g = 2(n-2) \left ( B(g) + pg \right ) \quad\text{on}~ M \times [0,T],   \\
\left ( (n-1)\Delta_{g(t)} + s_0 \right ) p(t) = - (n-2)A(g(t)) \cdot B(g(t))+\nabla_{g(t)}^2 
\cdot B(g(t))
 \end{cases}
\end{equation}
 for each  $t \in [0,T]$. Here $A$ is the Schouten tensor, and we adopt the notations 
 $A \cdot B = A_{ij} B_{ij}$, $\nabla^2 \cdot B = \nabla_i \nabla_j  B_{ij}$, 
 and the covariant derivative $\nabla = \nabla_{g(t)}$. 
 In light of Lemma \ref{eq div B is fourth order} below,  $\nabla^2 \cdot B$ is in 
 fact of order 4 instead of 6 in term of derivatives of metrics. Nevertheless CBF 
 \eqref{eq cbf eq p ellipt} is a fourth order geometric flow of $g$. There are indeed growing 
 interests in higher order geometric flows lately, for instance, among others: Calabi flow
 (\cite{Ca85, Ch01}), Yang-Mills flow (\cite{Str94, SST98, HT04, W19}), Willmore flow
 (\cite{KS01, KS02}),  $L^2$-Riemann curvature flow (\cite{St08, St11, St12a, St12b, St13}), 
   and Ambient obstruction flow (\cite{BH11, Lop18}).

\vskip .1cm
In this article we initiate the study of CBF. Let us describe the main results of this article. 
First we adopt the DeTurck's trick in Lemma \ref{lem the equi before after DeTurck} 
below to eliminate 
the degeneracy from the invariance under diffeomophisms of the system \eqref{eq cbf eq p ellipt}. 
To prove the short-time existence, following the approach used in \cite{LQZ14}, we first solve $p$
from the 
second equation in (\ref{eq cbf eq p ellipt}) and then the existence of a short-time solution to
 the mixed differential-integral evolution equation (\ref{eq decoupled DeTur CBF}) can be
  established by applying a version of Newton's method based on the existence theory of systems of
   linear parabolic differential equations (see Proposition \ref{prop short time sol decoupled DeTurck CBF}).

\begin{theorem} \label{thm main short time exist cbf}
Let $M^n$ be a closed manifold. Assume that $g_0$ is a $C^{4+\alpha}$ Riemannian metric 
on $M$ with constant scalar curvature $S_{g_0} = s_0$ and that the elliptic operator $(n-1)
 \Delta_{g_0} + s_0$ is invertible. Then there exist a unique $C^{4+\alpha, 1
 + \frac {1}{4}\alpha}$-family of Riemannian metrics $g(t)$ and a $C^{2+\alpha, 0}$-family
 of pressure functions $p(t)$ 
  which solve CBF (\ref{eq cbf eq p ellipt}) for some small $T>0$. 
\end{theorem}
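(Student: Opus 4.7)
The plan is to follow the strategy suggested by the authors themselves: apply DeTurck's trick to remove the diffeomorphism-induced degeneracy of the fourth-order operator $g\mapsto 2(n-2)B(g)$, then use the assumed invertibility of $(n-1)\Delta_{g_0}+s_0$ to eliminate the pressure $p$ algebraically, producing a single nonlocal but strongly parabolic evolution of $g$ alone. Short-time existence for the latter is then obtained by a Newton iteration based on the Schauder theory for linear fourth-order parabolic systems.

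First I would invoke Lemma \ref{lem the equi before after DeTurck} to replace \eqref{eq cbf eq p ellipt} by its DeTurck-modified version, formed by adding a Lie derivative $\mathcal{L}_{W(g,g_0)}g$ with $W$ built from the difference of the Levi-Civita connections of $g$ and $g_0$. A principal-symbol computation for the Bach tensor then shows that the resulting operator is strongly elliptic of order four, so the modified flow is a strictly parabolic fourth-order system. Because the two flows differ by pullback along the flow of $W$, this reduces Theorem \ref{thm main short time exist cbf} to short-time existence for the DeTurck-modified version.

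Next I would decouple $p$ from $g$. The assumed invertibility of $(n-1)\Delta_{g_0}+s_0$ together with elliptic regularity gives invertibility of $(n-1)\Delta_g+s_0$ from $C^{4+\alpha}$ to $C^{2+\alpha}$ for every $g$ in a small $C^{4+\alpha}$-neighborhood of $g_0$. Hence the second equation of \eqref{eq cbf eq p ellipt} defines a smooth nonlocal map $\mathcal{P}\colon g\mapsto p$, and substitution into the first equation produces the mixed differential-integral evolution \eqref{eq decoupled DeTur CBF}. Since the term $\mathcal{P}(g)\,g$ becomes lower order after the elliptic smoothing $\bigl((n-1)\Delta_g+s_0\bigr)^{-1}$, it does not disturb the strong parabolicity established above. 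I would then set up a Newton iteration in the parabolic Hölder class $C^{4+\alpha,1+\alpha/4}$: linearize about the current iterate, solve the resulting linear fourth-order parabolic system by the classical Solonnikov--Eidelman theory to obtain the next iterate, and verify quadratic convergence on a sufficiently short interval $[0,T]$. This is the content of Proposition \ref{prop short time sol decoupled DeTurck CBF}.

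Finally, I would pull the modified solution back along the diffeomorphisms generated by $-W(g(t),g_0)$ to produce a solution of the original CBF \eqref{eq cbf eq p ellipt}; the preservation $S_{g(t)}\equiv s_0$ then follows because the elliptic equation for $p$ was derived precisely as the condition $\partial_t S_{g(t)}=0$ along evolutions of the required form, and pullback preserves the scalar curvature. Uniqueness comes from applying DeTurck in reverse: two solutions of \eqref{eq cbf eq p ellipt} with the same initial data gauge to two solutions of the strictly parabolic modified flow with the same initial data, which must coincide by the uniqueness part of the Newton scheme. The main technical obstacle I anticipate is the symbol computation verifying strong parabolicity of the DeTurck-modified Bach operator, which depends crucially on Lemma \ref{eq div B is fourth order} (so that $\nabla^2\cdot B$ is genuinely of order four rather than six), together with checking that $\mathcal{P}$ is Lipschitz in the relevant Hölder norms uniformly along the iterates.
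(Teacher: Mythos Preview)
Your overall strategy matches the paper's---DeTurck gauge, decouple $p$, Newton iteration, pull back---but two substantive pieces are either missing or oversimplified.

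First, the DeTurck modification for a fourth-order flow is not the one you describe. A vector field built merely from the difference of Levi-Civita connections gives a second-order correction $\mathcal{L}_W g$, which cannot cancel the fourth-order degeneracy in $\frac{1}{n-2}\Delta R_{ij}$. The paper instead uses the fourth-order field \eqref{eq vector field w def}, namely $W^k = -g^{ij}\Delta_g(\Gamma^k_{ij}(g)-\Gamma^k_{ij}(g_0)) + \frac{n-2}{2(n-1)}(\nabla S)^k$, where the $\Delta_g$ in front is essential and the extra $\nabla S$ term is needed to absorb the Hessian $-\frac{1}{2(n-1)}\nabla_i\nabla_j S$ in \eqref{eq Bach definition}. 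Even then, the remaining piece $-\frac{1}{2(n-1)(n-2)}(\Delta S)g_{ij}$ of the Bach principal part is \emph{not} absorbed by any Lie derivative; the paper adds $\frac{1}{2(n-1)(n-2)}(\Delta S)g$ to the flow by hand, first passing through the modified CBF \eqref{eq cbf ellp Baqua and Helli} via Lemma \ref{lem CBF equiv forms} and only then applying DeTurck. Without both of these adjustments the principal symbol is not elliptic and your strong-parabolicity claim fails. (Lemma \ref{eq div B is fourth order}, which you invoke, controls the order of the \emph{right-hand side} of the pressure equation; it plays no role in the symbol of the evolution of $g$.)

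Second, the Newton scheme of Proposition \ref{prop short time sol decoupled DeTurck CBF} requires $g_0\in C^{8+\alpha}$, not $C^{4+\alpha}$: the starting approximate solution is $\bar g(t)=g_0+t\bigl(\mathcal{F}(g_0)+2(n-2)\mathcal{P}(g_0)g_0\bigr)$, and one needs $\bar g\in C^{4+\alpha,1}$, whereas $\mathcal{F}(g_0)$ loses four derivatives. To reach the stated hypothesis $g_0\in C^{4+\alpha}$ the paper approximates $g_0$ in $C^{4+\alpha}$ by $C^{8+\alpha}$ constant-scalar-curvature metrics (via the Yamabe problem) and passes to the limit using continuous dependence of the fixed point on the data. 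Your proposal omits this approximation step, so as written it only proves the theorem under the stronger regularity assumption.
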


The metrics $g(t)$ and the functions $p(t)$ are shown to be smooth when the initial metric 
is smooth (cf. Theorem \ref{thm main cbf better regu}). We also show that the backward uniqueness
 holds for CBF on closed manifolds. Readers are referred to \cite{Ko10} for the history and significance
of the backward uniqueness of geometric flows.

\begin{theorem} \label{thm backward uniqu CBF}
Let $M^n$ be a closed manifold and let $(g(t), p(t))$ and $(\tilde{g}(t), \tilde{p}(t))$, $t \in [0,T]$,
be two smooth solutions of (\ref{eq cbf eq p ellipt}) on $M$. We assume that operator 
$(n-1)\Delta_{g(t)} + s_0 $ is invertible for each $t$. If $g(T) =\tilde{g}(T)$,  then 
$(g(t), p(t)) =(\tilde{g}(t), \tilde{p}(t))$ for all $t$.
\end{theorem}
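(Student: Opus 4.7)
The plan is to adapt the Carleman-estimate strategy developed by Kotschwar \cite{Ko10} for backward uniqueness of geometric flows, extended to the fourth-order parabolic-elliptic system \eqref{eq cbf eq p ellipt}. Reversing time via $\tau = T - t$, the differences $h = g - \tilde g$ and $q = p - \tilde p$ vanish at $\tau = 0$, and the aim is to show they stay identically zero on $[0,T]$.

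Subtracting the two metric equations yields
\begin{equation*}
\partial_\tau h = - 2(n-2)\bigl( B(g) - B(\tilde g) \bigr) - 2(n-2)\bigl( p\, h + q\, \tilde g \bigr),
\end{equation*}
and since $B$ is a fourth-order functional of $g$ whose linearization has principal symbol proportional to $\Delta^2$ modulo tangential terms from diffeomorphism invariance, the first term expands as a fourth-order elliptic operator in $h$ plus expressions polynomial in $h, \nabla h, \nabla^2 h, \nabla^3 h$ with coefficients bounded by derivatives of $g$, $\tilde g$ and their curvatures. To overcome the weak parabolicity caused by diffeomorphism invariance, I would follow Kotschwar by enlarging the unknown to
\begin{equation*}
X = \bigl( h,\, \nabla h,\, \nabla^2 h,\, \nabla^3 h,\, \operatorname{Rm}(g) - \operatorname{Rm}(\tilde g),\, \Gamma(g) - \Gamma(\tilde g),\ldots \bigr),
\end{equation*}
chosen so that $X$ satisfies a closed differential inequality
\begin{equation*}
\bigl| \partial_\tau X - \mathcal{P}(X) \bigr| \leq C\sum_{j \leq 3} \bigl| \nabla^j X \bigr| + C|q|,
\end{equation*}
with $\mathcal{P}$ strictly fourth-order elliptic. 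For the pressure, subtracting the two elliptic equations in \eqref{eq cbf eq p ellipt} and collecting $q$-terms gives
\begin{equation*}
\bigl((n-1)\Delta_{g(\tau)} + s_0\bigr) q = \mathcal R\bigl(h, \nabla h, \ldots, \nabla^4 h\bigr),
\end{equation*}
where $\mathcal R$ is polynomial in its arguments with bounded coefficients (using Lemma \ref{eq div B is fourth order} to keep the $h$-derivatives at most fourth order). The assumed invertibility of $(n-1)\Delta_{g(\tau)} + s_0$ and standard elliptic regularity on the closed $M$ then produce the pointwise bound $|q| + |\nabla q| + |\nabla^2 q| \leq C\sum_{j \leq 4}|\nabla^j h|$, closing the system purely in terms of $X$.

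With $X(0) = 0$ and the closed fourth-order inequality in hand, the proof concludes by invoking a Carleman inequality for fourth-order parabolic systems on tensor bundles: $X \equiv 0$ on a small interval $[0,\tau_0]$, hence $h \equiv 0$ and $q \equiv 0$ there, and a standard open-closed continuation extends this to $[0, T]$. The main obstacle is the Carleman estimate itself: Kotschwar's version is formulated for second-order operators, so one must either adapt its proof (replacing the standard heat-kernel-like weight by one suited to the biharmonic operator and tracking additional derivative terms via repeated integration by parts) or invoke an existing fourth-order Carleman inequality from the parabolic PDE literature. Bookkeeping for the dependence of constants on bounds for $(\tilde g, \tilde p)$ and their derivatives, as well as verifying that the enlarged system closes at the declared order, is routine once the coupled inequality is set up.
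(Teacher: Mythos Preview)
Your proposal follows the Carleman-estimate route of \cite{Ko10}, whereas the paper instead uses the Agmon--Nirenberg energy (frequency-function) method of \cite{Ko16}, combined with the pressure-handling idea of \cite{SZ18}. Concretely, the paper splits the differences into two groups: $X = X^{(0)}\oplus\cdots\oplus X^{(4)}$ built from $\nabla^k\operatorname{Rm}_g - \nabla^k_{\tilde g}\operatorname{Rm}_{\tilde g}$, which satisfies a genuine fourth-order parabolic inequality $|(\partial_\tau+\Delta^2)X|\le C(|X|+|Y|+|X^{(5)}|+|X^{(6)}|)+C\sum|q^{(b)}|$; and $Y = Y^{(0)}\oplus\cdots\oplus Y^{(4)}$ built from $g-\tilde g$, $\nabla-\tilde\nabla$ and their derivatives, which satisfies only an ODE-type bound $|\partial_\tau Y|\le\cdots$ with \emph{no} elliptic part. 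The pressure differences $q^{(k)}$ are controlled only in $L^2$ by $X$ and $Y$ via elliptic estimates. One then sets $E=\int(|X|^2+|Y|^2)$, $F=\int|\Delta X|^2$, $N=F/E$, and derives $dE/d\tau\le C(N+1)E$ together with a lower bound on $dN/d\tau$; the Agmon--Nirenberg argument forces $E\equiv 0$. This entirely sidesteps the need for a fourth-order Carleman inequality, which you correctly flag as the main obstacle in your approach.

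There is also a structural issue in your setup that would need repair even if you had the Carleman estimate in hand. You assert that the enlarged vector $X$ can be chosen so that $|\partial_\tau X-\mathcal P(X)|$ is bounded with $\mathcal P$ \emph{strictly} fourth-order elliptic on all components. This is not how Kotschwar's scheme works: the metric and connection differences (your $h,\nabla h,\Gamma(g)-\Gamma(\tilde g),\ldots$) never acquire a strictly elliptic principal part---they obey only ODE-type inequalities, with the parabolic smoothing living solely on the curvature-difference components. Both the Carleman approach of \cite{Ko10} and the energy approach of \cite{Ko16} are designed precisely for such mixed parabolic/ODE systems, so your proposal would need to be reformulated in that shape. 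Once that is done, your route is in principle viable, but the paper's choice buys you a considerably shorter argument: no Carleman weight to construct, and the higher-order terms $|X^{(5)}|,|X^{(6)}|$ are absorbed by elementary interpolation against $\int|\Delta X|^2$.
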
  

For the study of large time behaviors of solutions to CBF, we follow the approach from \cite{St08, St11, 
St12a, St12b, St13} and establish the integral version of Shi's type estimate of derivatives of  
curvature (cf. Theorem \ref{thm int Shi curvature derivative est cbf}) assuming that the curvature 
and the pressure function are uniformly bounded in space and time. Moreover, a local version of
 integral Shi's type estimate (Theorem \ref{thm improved Shi curvature derivative conseq}) is 
 also established. Finally, based on an argument in the proof of \cite[Theorem 1.3]{St13}, 
 we obtain the  Shi's pointwise-estimate assuming only the  boundedness of  curvature and pressure function
  (without uniform Sobolev constant). 

\begin{theorem}\label{thm ptwise Shi est for CBF-intro} 
Let $M^n$ be a complete manifold and let $(g(t), p(t))$ be a smooth solution to (\ref{eq cbf eq p ellipt})
 in $M \times [0, T]$ with constant scalar curvature $s_0$. 
 We assume that for some constant $K>0$
\begin{equation} \label{eq curv bdd improv ptwise shi assump}
\sup_{(x,t) \in M \times [0,T]} \left ( |\operatorname{Rm}(x, t)| +|p(x,t)| \right ) \leq  K.
\end{equation}
Then, for any $m \in \mathbb{N}$ there exists a constant $C = C(n, m)$ such that 
\begin{equation*}
    |\nabla_{g(t)}^m \operatorname{Rm}(x, t)|_{g(t)} \leq C\left ( K + t^{-\frac{1}{2}}
    \right )^{1+ \frac{m}{2}} 
  ~~~  \text{ for } (x,t) \in M \times (0,T].
\end{equation*}
\end{theorem}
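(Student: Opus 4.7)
The plan is to adapt the strategy of \cite[Theorem 1.3]{St13}, combining the local $L^2$ integral Shi-type estimate of Theorem~\ref{thm improved Shi curvature derivative conseq} with a parabolic rescaling and a blow-up contradiction that sidesteps any a priori bound on the Sobolev constant.

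First, reduce to a unit scale. The system (\ref{eq cbf eq p ellipt}) is invariant under the parabolic rescaling $\tilde g(\tilde t) := c^{2} g(t_0 + c^{-4}\tilde t)$, $\tilde p(\tilde t) := c^{-4} p(t_0 + c^{-4}\tilde t)$, and both sides of the claimed inequality scale by $c^{-(m+2)}$. It therefore suffices to establish the following unit-scale statement: for every $m \in \mathbb{N}$ there is $C = C(n,m)$ such that any smooth solution of (\ref{eq cbf eq p ellipt}) on $M \times [0,1]$ with $\sup(|\operatorname{Rm}| + |p|) \le 1$ satisfies $|\nabla^m \operatorname{Rm}|(\cdot, 1) \le C$. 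Applying this unit-scale bound to the rescaled flow on the window $[t - \min(t, K^{-2}),\, t]$ with $c = \max(K^{1/2}, t^{-1/4})$ then delivers the full estimate of the theorem.

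Proceed by induction on $m$. The case $m = 0$ is the hypothesis (\ref{eq curv bdd improv ptwise shi assump}); assume the full theorem at all orders $m' < m$. If the unit-scale statement fails at order $m$, produce pointed solutions $(M_k, g_k, p_k; x_k)$ with $\sup(|\operatorname{Rm}_k| + |p_k|) \le 1$ on $[0,1]$ but $Q_k := |\nabla^m \operatorname{Rm}_k|(x_k, 1) \to \infty$. Set $\lambda_k := Q_k^{1/(m+2)}$ and rescale
\[
 \hat g_k(s) := \lambda_k^{2}\, g_k\bigl(1 + \lambda_k^{-4} s\bigr), \qquad \hat p_k(s) := \lambda_k^{-4}\, p_k\bigl(1 + \lambda_k^{-4} s\bigr),
\]
for $s \in [-\lambda_k^{4}, 0]$. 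Then $(\hat g_k, \hat p_k)$ solves CBF with $|\widehat{\operatorname{Rm}}_k| \le \lambda_k^{-2} \to 0$, $|\hat p_k| \le \lambda_k^{-4} \to 0$, and $|\hat\nabla^m \widehat{\operatorname{Rm}}_k|(x_k, 0) = 1$ by construction. The full inductive hypothesis, transported through the rescaling, forces $|\hat\nabla^{m'} \widehat{\operatorname{Rm}}_k| \le C(n, m')\lambda_k^{-(m'+2)} \to 0$ for every $m' < m$, uniformly on any fixed parabolic cylinder about $(x_k, 0)$.

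Feed the vanishing curvature and pressure bounds on the rescaled flows into the local integral Shi estimate (Theorem~\ref{thm improved Shi curvature derivative conseq}) at every order $j \ge 0$, giving $L^2$-bounds on $|\hat\nabla^j \widehat{\operatorname{Rm}}_k|$ uniform in $k$ on any fixed parabolic cylinder about $(x_k, 0)$. Combined with the pointwise vanishing of lower-order derivatives, this yields enough regularity to extract a smooth pointed Cheeger--Gromov limit $(M_\infty, g_\infty(s), x_\infty)$ solving CBF with $\operatorname{Rm}_{g_\infty} \equiv 0$, hence flat and stationary --- contradicting $|\hat\nabla^m \widehat{\operatorname{Rm}}|(x_\infty, 0) = 1$. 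The main obstacle is precisely this last compactness step, executed without any a priori injectivity-radius bound. Following \cite{St13}, the crux is that the uniformly vanishing sectional curvatures after rescaling, together with Bishop--Gromov volume comparison, produce almost-Euclidean volume growth on geodesic balls of a definite radius about $x_k$; this supplies a local Sobolev inequality with a constant depending only on $n$, which upgrades the $L^2$ Shi bounds to uniform $C^0$ bounds via parabolic Moser iteration on those balls, thereby producing the smooth limit and closing the argument.
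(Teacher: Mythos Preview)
Your overall blow-up strategy is close to the paper's (both follow \cite{St13}), but you diverge in two ways, one harmless and one fatal.

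The harmless difference: the paper does not induct on $m$. Instead it packages all derivatives into the single scale-homogeneous quantity $f_m(x,t,g) = \sum_{j=1}^m |\nabla^j \operatorname{Rm}|^{2/(2+j)}$ and picks $(x_i, t_i)$ so that $f_m/(K + t^{-1/2})$ is within a factor $2$ of its supremum. After rescaling by $\lambda_i = f_m(x_i, t_i, g_i)$, this point-picking forces $f_m(\cdot, \cdot, \tilde g_i) \le 3$ globally on $M_i \times [-1,0]$, giving simultaneous control on \emph{all} derivatives up to order $m$ without induction. Your inductive scheme is a legitimate alternative (at least for $m \ge 4$, so that the hypotheses of Theorem~\ref{thm improved Shi curvature derivative conseq} via Remark~\ref{rk about part t nabla g bound} are supplied by lower-order bounds; the small-$m$ cases would need separate handling), and it buys the same lower-order control by a different route.

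The fatal difference is your last paragraph. Bishop--Gromov volume comparison gives \emph{upper} bounds on ball volumes from a lower Ricci bound, not lower bounds; vanishing sectional curvature alone does not produce almost-Euclidean volume growth (a thin flat cylinder $S^1_\epsilon \times \mathbb{R}^{n-1}$ has zero curvature and arbitrarily small unit balls). So your local Sobolev inequality is unjustified, and with it the Moser iteration and the passage to a smooth limit. The paper resolves this --- and this is really the decisive idea from \cite{St13} --- by pulling back via the exponential map $\exp_{x_i}^{\tilde g_i(0)}: B(0,2r) \subset T_{x_i} M_i \to M_i$. Since the rescaled curvature tends to zero, the conjugate radius goes to infinity, so the pullback metric $\tilde h_i$ is a genuine Riemannian metric on the Euclidean ball $B(0,2r)$ regardless of any injectivity-radius or volume lower bound on $M_i$. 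The uniform curvature-derivative bounds then force $\tilde h_i$ to be uniformly equivalent to the Euclidean metric on this ball, so the Sobolev constant is bounded for free; the local integral Shi estimates upgrade to pointwise bounds, and one extracts a flat limit with $f_m(0,0,\tilde h_\infty) = 1$, the desired contradiction. Your argument can be repaired by inserting exactly this lifting step in place of the Bishop--Gromov claim.
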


Theorem \ref{thm ptwise Shi est for CBF-intro} provides a criterion of the finite singular time of CBF
 and possibly facilitates further studies of long-time behaviors of CBF.

\vskip .1cm
Now we give an outline of the article. In \S \ref{sect short time cbf} we introduce CBF and prove 
Theorem \ref{thm main short time exist cbf} using Newton's method, near the end of the section
 we give a quick proof of the uniqueness and the improved regularity.
In \S \ref{sec backward unique CBF} we prove the backward uniqueness
 (Theroem \ref{thm backward uniqu CBF}). 
In \S \ref{sect Shi conformal Bach f} we prove a  Shi's type $L^2$-estimate
 of derivatives of curvatures for CBF. We also give two quick consequences. One is a characterization of
  when a singularity develops in CBF, and the other is a compactness theorem for a family of CBF. 
  In \S \ref{sect Improved Shi conf Bach flow} we first prove a local version of Shi's type $L^2$-estimate 
  of derivatives of curvatures for CBF, then we  prove a Shi's type pointwise-estimate of derivatives of
   curvatures without assuming uniform Sobolev constant bound 
(Theorem \ref{thm ptwise Shi est for CBF-intro}).

\section{Well-posedness of CBF: existence, uniqueness and regularity}  \label{sect short time cbf}

In this section we prove Theorem \ref{thm main short time exist cbf}. This is done in 
steps. The first step consists of a sequence of conversions. 
We convert solving (\ref{eq cbf eq p ellipt})
into solving modified CBF (\ref{eq cbf ellp Baqua and Helli}),
then we use the DeTurck's trick to further convert the solving into solving
 DeTurck modified CBF system (\ref{eq mod cbf DeTurck flow}) 
 (Lemma \ref{lem the equi before after DeTurck}),
  the last conversion is to use an idea in \cite{LQZ14}
to reduce to solve the decoupled DeTurck modified CBF (\ref{eq decoupled DeTur CBF}).
The second step is to solve the linearized equation of (\ref{eq decoupled DeTur CBF}) 
by  contraction mapping theorem from the solvability of linear parabolic  system 
(see Lemma \ref{lem sola standard parabolic linear system high order})
in \S \ref{subsect lineard DeTurck mod-cbf}.
In the last step we solve (\ref{eq decoupled DeTur CBF}) by Newton's method
in \S \ref{subsect short time decouple DeTurck cbf} 
(Proposition \ref{prop short time sol decoupled DeTurck CBF}).

In the last two subsections we prove the uniqueness and the
 improved regularity. We begin  this section with some preliminaries of CBF.

\subsection{Conformal Bach flow} \label{subsect cbf}
Let $(M^n, g)$ be a Riemannian manifold.  The Schouten tensor $A =A(g)$ is defined by
\begin{equation} \label{eq def Shouten tensor}
    A_{ij} (g) \doteqdot \frac {1}{n-2} \left(R_{ij} - \frac{S}{2(n-1)} g_{ij}\right),
\end{equation} 
where $R_{ij}$ is the Ricci tensor and $S$ is the scalar curvature of metric $g$. 
Let $W$ be the Weyl tensor.
The Cotton tensor is defined by $C_{ijk} \doteqdot \nabla_k A_{ij} - \nabla_j A_{ik}
 = \frac{n-2}{n-3} \nabla_l W_{lijk}$. 
  The Bach tensor, which is well-known in general relativity, is defined by
\begin{equation}\label{eq def Bach tensor}
    B_{ij} (g)  \doteqdot \frac{1}{n - 3} \nabla_k \nabla_l W_{iklj} + \frac{1}{n - 2}
    R_{kl}W_{iklj} =\nabla_k \nabla_k A_{ij} - \nabla_k \nabla_i A_{jk} +A_{kl}W_{iklj}.
\end{equation}
Metric $g$ is called  Bach flat  if $B(g) =0$.
Note that we adopt the convention that Ricci tensor $R_{ij} =g_{kl}R_{iklj}$. 

Now we list a few basic facts about Bach tensor which will be used later. 
Bach tensor is trace-free. 
In dimension $4$ Bach tensor is conformally invariant.  We have (\cite[Lemma 5.1]{CC13}) 
\begin{lemma} \label{eq div B is fourth order}
For  Riemannian manifold $(M^n, g)$, the divergence of Bach tensor
\begin{equation}
    \nabla_j B_{ij} = - \frac{n-4}{(n-2)^2} \, C_{jki}R_{jk}.
\end{equation}
Hence $\nabla^2 \cdot B(g)= \nabla_i \nabla_j  B_{ij}(g)$ is a differential operator 
of $g$ up to fourth derivatives.
In particular, in dimension 4 Bach tensor is divergence-free.
\end{lemma}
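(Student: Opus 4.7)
The plan is to verify the divergence identity by a direct tensor calculation starting from the second expression for the Bach tensor in \eqref{eq def Bach tensor},
\[
B_{ij} = \nabla_k\nabla_k A_{ij} - \nabla_k\nabla_i A_{jk} + A_{kl}\,W_{iklj}.
\]
First I would rewrite the two second-derivative terms as a single derivative of the Cotton tensor. Since $C_{jik} = \nabla_k A_{ji} - \nabla_i A_{jk}$, the first two terms combine to $\nabla_k C_{jik}$, so
\[
B_{ij} = \nabla_k C_{jik} + A_{kl}\,W_{iklj}.
\]
Applying $\nabla^j$ then splits $\nabla^j B_{ij}$ into a ``derivative-of-Cotton'' piece and an ``algebraic-derivative'' piece.

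For the derivative-of-Cotton piece, the strategy is to commute $\nabla^j$ past $\nabla^k$ so as to produce a divergence of the Cotton tensor, picking up a Ricci-identity commutator $[\nabla^j,\nabla^k]C_{jik}$ that is algebraic in the Riemann tensor. The resulting $\nabla^j C_{jik}$ is then simplified using the antisymmetry $C_{jik} = -C_{jki}$ together with the contracted second Bianchi identity $\nabla^j A_{ij} = \tfrac{1}{2(n-1)}\nabla_i S$, which removes another layer of derivatives. For the algebraic-derivative piece, distribute to get $(\nabla^j A^{kl})W_{iklj} + A^{kl}\nabla^j W_{iklj}$; the second summand is converted by the Weyl-divergence identity $\nabla^l W_{iklj} = \tfrac{n-3}{n-2}C_{jki}$ (a direct consequence of the second Bianchi identity applied to $W$) into an algebraic Cotton-Schouten contraction, while the first summand is reorganized using the definition $C_{jik} = \nabla_k A_{ji} - \nabla_i A_{jk}$ into two pieces, one of which is designed to cancel a corresponding term arising from the commutator above.

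The key structural point driving the proof is that all third-derivatives-of-metric terms in $\nabla^j B_{ij}$ must cancel, leaving a purely algebraic expression. After cancellation, the only nontrivial quadratic curvature contraction that survives the symmetries of $W$ and the trace-freeness of $C$ is $C_{jki}R^{jk}$, and the combinatorial factor $-\tfrac{n-4}{(n-2)^2}$ emerges from tracking the $n$-dependent constants contributed by the Schouten normalization $\tfrac{1}{n-2}$ in \eqref{eq def Shouten tensor} and the $\tfrac{n-3}{n-2}$ in the Weyl-divergence identity, combined with the $(n-4)$ ``anomaly'' that reflects the conformal weight of $B$ away from dimension four. In particular, when $n=4$ the prefactor vanishes identically, recovering the classical divergence-freeness of the Bach tensor in dimension four.

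The main obstacle is the bookkeeping: every Riemann contraction produced by a Ricci-identity commutator and every Bianchi reorganization contributes several terms with subtle sign and symmetry dependences, and one must verify that all the purely third-derivative-of-$g$ contributions cancel in pairs. Once the divergence formula is established, the second conclusion follows immediately: $C_{jki}$ is of order three in the metric and $R^{jk}$ of order two, so $C_{jki}R^{jk}$ involves at most three derivatives of $g$, and hence $\nabla^2\cdot B = \nabla^i\nabla^j B_{ij} = -\tfrac{n-4}{(n-2)^2}\nabla^i(C_{jki}R^{jk})$ involves at most four derivatives of $g$, rather than the six that a naive count in \eqref{eq def Bach tensor} would suggest.
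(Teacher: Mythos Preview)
The paper does not give its own proof of this lemma; it simply quotes the result from \cite[Lemma~5.1]{CC13}. Your proposed route---rewriting $B_{ij}=\nabla_k C_{jik}+A_{kl}W_{iklj}$, taking the divergence, commuting derivatives via the Ricci identity, and closing up with the contracted second Bianchi identity and the Weyl--divergence relation $\nabla_l W_{lijk}=\tfrac{n-3}{n-2}C_{ijk}$---is precisely the standard calculation carried out in that reference, so in substance your approach matches what the paper relies on. The outline is sound; as you yourself flag, the only genuine labor is the sign and index bookkeeping (for instance, your stated form of the Weyl--divergence identity has the last two Cotton indices transposed relative to the paper's convention $C_{ijk}=\nabla_k A_{ij}-\nabla_j A_{ik}$, which would cost a sign), but nothing in the strategy is missing.
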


Examples of 4 dimensional Bach flat 
metrics are: (i) those locally conformal to Einstein metrics, and (ii) those half-conformally 
flat (\cite[Proposition 4.78]{Be87}).

\vskip .1cm
Now we give a proof of the equivalence of  systems (\ref{eq cbf ist time scalar const}) and 
(\ref{eq cbf eq p ellipt}).
\begin{lemma} \label{lem CBF equiv forms}
Let $M^n$ be a closed manifold. We define a modified CBF by 
\begin{equation} \label{eq cbf ellp Baqua and Helli}
    \begin{cases}
        & \partial_t g = 2(n-2) \left ( B(g) + \frac{1}{2(n-1)(n-2)} (\Delta S_g) g +pg \right ), \\
        & \left ( (n-1)\Delta_{g(t)} +s_0 \right )p(t)= - (n - 2) A(g(t)) \cdot B(g(t)) + 
        \nabla^2_{g(t)} \cdot B(g(t)).
    \end{cases}
\end{equation}
Then the flows defined by  (\ref{eq cbf ist time scalar const}),
(\ref{eq cbf eq p ellipt}),  and (\ref{eq cbf ellp Baqua and Helli}) with initial metric $g(0) =g_0$ 
of constant scalar curvature $s_0$, are all equivalent.
\end{lemma}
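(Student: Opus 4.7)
The plan is to reduce all three equivalences to the standard evolution formula for the scalar curvature under a general metric flow $\partial_t g = v$, namely
\[
\partial_t S_g = -\Delta_g(\operatorname{tr}_g v) + \nabla^i \nabla^j v_{ij} - v_{ij} R^{ij}.
\]
Applying this with $v = 2(n-2)(B + pg)$, the right-hand side shared by \eqref{eq cbf ist time scalar const} and \eqref{eq cbf eq p ellipt}, and using that $B$ is trace-free (so $\operatorname{tr}_g v = 2n(n-2)p$) together with $\operatorname{Rc} = (n-2)A + \frac{S}{2(n-1)}g$ to get $v \cdot \operatorname{Rc} = 2(n-2)^2 A \cdot B + 2(n-2)pS$, a direct computation collapses to
\[
\partial_t S = -2(n-2)\bigl[(n-1)\Delta p - \nabla^2 \cdot B + (n-2) A \cdot B + pS\bigr].
\]

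For the equivalence of \eqref{eq cbf ist time scalar const} and \eqref{eq cbf eq p ellipt}: if $(g, p)$ solves \eqref{eq cbf ist time scalar const} then $S \equiv s_0$ forces $\partial_t S = 0$, and the displayed formula rearranges immediately into the elliptic equation for $p$ in \eqref{eq cbf eq p ellipt}. Conversely, if $(g, p)$ solves \eqref{eq cbf eq p ellipt}, substituting its elliptic equation into the displayed formula yields the pointwise-in-$x$ linear ODE
\[
\partial_t (S - s_0) = -2(n-2)\, p\, (S - s_0),
\]
so the initial condition $S(0) = s_0$ forces $S \equiv s_0$, which is \eqref{eq cbf ist time scalar const}.

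For the equivalence with \eqref{eq cbf ellp Baqua and Helli}: the two right-hand sides differ only by $\frac{1}{2(n-1)(n-2)}(\Delta S)g$, a term that vanishes whenever $S$ is spatially constant, so \eqref{eq cbf ist time scalar const} immediately implies \eqref{eq cbf ellp Baqua and Helli}. For the converse, the same type of computation applied to $v = 2(n-2)B + \frac{1}{n-1}(\Delta S)g + 2(n-2)pg$, together with the same elliptic equation for $p$, gives
\[
\partial_t S = -\Delta^2 S - \frac{S \Delta S}{n-1} - 2(n-2)\, p\, (S - s_0).
\]
Setting $u = S - s_0$ turns this into a quasi-linear fourth-order parabolic equation $\partial_t u = -\Delta^2 u - \frac{s_0 + u}{n-1}\Delta u - 2(n-2)\, p\, u$ with smooth, bounded coefficients; since $u \equiv 0$ solves it with the same zero initial data, a standard $L^2$ energy estimate (multiply by $u$, integrate by parts so that $-2\int u \Delta^2 u = -2\int (\Delta u)^2$, absorb the $u\Delta u$ cross-term into $\int(\Delta u)^2$ via Young's inequality) combined with Gr\"onwall forces $u \equiv 0$, so \eqref{eq cbf ellp Baqua and Helli} reduces to \eqref{eq cbf ist time scalar const}.

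The main obstacle is the backward direction \eqref{eq cbf ellp Baqua and Helli}$\Rightarrow$\eqref{eq cbf ist time scalar const}, which requires a uniqueness argument for a nonlinear fourth-order parabolic equation on $S$ rather than the pointwise first-order ODE that suffices for \eqref{eq cbf eq p ellipt}$\Rightarrow$\eqref{eq cbf ist time scalar const}; both $\partial_t S$ computations, while routine, rely crucially on the trace-freeness of the Bach tensor and on the Schouten decomposition of the Ricci tensor to eliminate the Weyl-type term.
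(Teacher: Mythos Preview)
Your proposal is correct and follows essentially the same route as the paper: compute $\partial_t S$ via the first-variation formula, reduce the equivalence of \eqref{eq cbf ist time scalar const} and \eqref{eq cbf eq p ellipt} to the pointwise ODE $\partial_t(S-s_0)=-2(n-2)p(S-s_0)$, and reduce \eqref{eq cbf ellp Baqua and Helli}$\Rightarrow$\eqref{eq cbf ist time scalar const} to uniqueness for the fourth-order parabolic equation $\partial_t S = -\Delta^2 S - \tfrac{1}{n-1}S\Delta S + (s_0-S)\cdot\text{const}\cdot p$. The only difference is cosmetic: the paper invokes uniqueness for this equation as a black box (treating $\tfrac{1}{n-1}S_{g(t)}$ as a known coefficient to make it linear), whereas you spell out the $L^2$ energy estimate explicitly.
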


\begin{proof}
We prove the equivalences through three parts. Part 1. Suppose that $g(t)$ and the associated 
$p(t)$ is a solution of  (\ref{eq cbf ist time scalar const}), 
we prove that the pair $(g(t), p(t))$ is a solution of (\ref{eq cbf eq p ellipt}) 
and (\ref{eq cbf ellp Baqua and Helli}) by showing that $p(t)$ satisfies 
the second equation in (\ref{eq cbf eq p ellipt}). 
As usual, in the computation below $\Delta, \nabla, R_{ij}$, et al. are defined by metric $g(t)$. 

We apply the first variation formula of scalar curvature (see, for example, \cite[p.109]{CLN6} 
using $v_{ij}(t) = 2(n-2) \left ( B_{ij}+ pg_{ij} \right )$ and $V(t) = g_{ij}(t) v_{ij}(t)$).
 We simplify to get
\begin{align}
\partial_t S_{g(t)} 
&= 2(n - 2) \left [ - \Delta \left ( g_{ij}(B_{ij} + pg_{ij}) \right )+ \nabla_i \nabla_j \left ( B_{ij}
 + pg_{ij} \right ) -R_{ij} \left ( B_{ij} + pg_{ij} \right ) \right ] 
\label{eq S(t) deriv formu} \\
& = 2(n- 2) \left [ - (n-1) \Delta p - s_0 p - (n-2)A_{ij}B_{ij} + \nabla_i \nabla_j
     B_{ij} \right ],  \notag
\end{align}
where we have used the facts that Bach tensor is trace-free and $S_{g(t)}=s_0$,
 to get the last equality.
  Now the  second equation in (\ref{eq cbf eq p ellipt}) follows from $\partial_t S_{g(t)} =0$.
  
\vskip .1cm
Part 2. Suppose that  pair $(g(t), p(t))$ is a solution of (\ref{eq cbf eq p ellipt}), 
we prove that $g(t)$ associated with $p(t)$ is a solution of  (\ref{eq cbf ist time scalar const}) 
and also a solution of (\ref{eq cbf ellp Baqua and Helli}) by showing $S_{g(t)} =s_0$.
 Combining the first equality in (\ref{eq S(t) deriv formu}) and the second equation in
  (\ref{eq cbf eq p ellipt}) we have
\[
\partial_t S_{g(t)} =2(n-2) \left (s_0 - S_{g(t)}  \right ) p (t)
\]
with initial data $S_{g(0)} =s_0$. Hence $S_{g(t)} =s_0$.

\vskip .1cm
Part 3. Suppose that $(g(t), p(t))$ is a solution of (\ref{eq cbf ellp Baqua and Helli}), 
we prove that $(g(t), p(t))$ is a solution of  (\ref{eq cbf ist time scalar const}) by 
showing $S_{g(t)} =s_0$.
Calculating as in Part 1, 
we have
\begin{equation*}
\partial_t S_{g(t)}
=  \left (s_0 - S_{g(t)} \right ) p  - \left ( \Delta^2S_{g(t)}+ \frac{1}{n-1} S_{g(t)} 
     \Delta S_{g(t)} \right )
\label{eq scalar curv evol Baq Hell asso},
\end{equation*} 
where we have used the equations in (\ref{eq cbf ellp Baqua and Helli}) 
to get the equality. 
By the uniqueness of solutions of the 4th order linear (by treating $\frac{1}{n-1} S_{g(t)}$ 
as a known function) parabolic equation on closed manifolds we conclude that $S_{g(t)} =s_0$.
\end{proof}

\begin{remark} \label{rem term with key observ}
In dimension $n = 4$,  since divergence $\nabla_i B_{ij} =0$, (\ref{eq cbf eq p ellipt})
simplifies to
\begin{equation*}
\begin{cases}
 \partial_t g= 4 \left (B+pg \right ),  \\
( 3 \Delta_{g(t)}  + s_0) p(t)  = -2A(g(t)) \cdot B (g(t)). 
\end{cases}
\end{equation*}
Note that also in dimension 4 Bach tensor is the negative of the gradient of the $L^2$-norm functional 
of Weyl tensor (see \cite[p.135]{Be87}), 
we conclude that $\int _M |W_g|_g^2 d \mu_g$  is non-increasing under the flow above.
\end{remark}

\subsection{DeTurck modified CBF}  \label{subsect DeTurck cbf}
First we give a little motivation for our route of 
using  (\ref{eq cbf ellp Baqua and Helli}) to solve  (\ref{eq cbf eq p ellipt}) described
at the beginning of this section. 
Note that Bach tensor can be written schematically as
\begin{equation}\label{eq Bach definition}
    B_{ij} = \frac{1}{n-2} \Delta R_{ij} -   \frac{1}{2(n-1)(n-2)} (\Delta S) g_{ij} -
     \frac{1}{2(n-1)} \nabla_i \nabla_j S + \text{lower order terms},
\end{equation}
hence we may modify the standard choice of the vector field in the DeTurck's trick for 
Ricci flow by adding the action of $\Delta_g$ (see the first term in (\ref{eq vector field w def})
below) to eliminate the degeneracy in $\frac{1}{n-2} \Delta R_{ij}$ 
caused by the symmetry of diffeomophisms,
 the term  $ -  \frac{1}{2(n-1)(n-2)} (\Delta S) g$ has to be taken care of as in \cite{BH11}
  which partly explains why we use (\ref{eq cbf ellp Baqua and Helli}) rather than
   (\ref{eq cbf eq p ellipt}), and lastly the term $- \frac{1}{2(n-1)} \nabla_i \nabla_j S$ 
   is of Hessian-type and can be taken care off by modifying the choice of the vector field 
    (see the second term in (\ref{eq vector field w def})).

Fixing a metric $\tilde{g}$ with enough smoothness as a background metric and following 
the choice of the vector field for DeTurck's trick as in \cite[\S 5.2]{BH11}, we define vector field
\begin{equation} \label{eq vector field w def}
W_g^k \doteqdot - g^{ij}  \Delta_g  \left ( \Gamma_{ij}^k(g) - \Gamma_{ij}^k(\tilde{g})  
\right ) + \frac{n-2}{2(n-1)} (\nabla_g S_g)^k.
\end{equation}
Recall that Lie derivative  $(\mathcal{L}_W g)_{ij} \doteqdot \nabla_i W_j + \nabla_j W_i$. 
We define the DeTurck modified CBF as
\begin{equation} \label{eq mod cbf DeTurck flow}
    \begin{cases}
\partial_t g = 2(n-2) \left (B(g) + \frac{1}{2(n-1)(n-2)} (\Delta S_g) g + pg \right ) 
             + \mathcal{L}_{W_g}g, \\
\left ( (n-1)\Delta_{g(t)} + s_0 \right ) p(t) =- (n-2) A(g(t)) \cdot B(g(t)) + \nabla_{g(t)}^2 \cdot B(g(t)).
    \end{cases}
\end{equation}
We will consider its initial value problem $g(0) = g_0$, assuming scalar curvature $S_{g_0} = s_0$. 

If we have a solution $(g(t), p(t))$ of (\ref{eq mod cbf DeTurck flow}), using the vector field $W_g$ 
we may define an one-parameter family of diffeomorphism $\varphi_t: M \to M, \, t \in [0,T]$, 
by solving ordinary differential equations for each $x \in M$
\begin{equation} \label{eq one parame diffeom}
\partial_t \varphi_t(x) = -W_{g(t)} (\varphi_t(x)) \quad \text{with} \quad \varphi_0(x) =x. 
\end{equation}
\begin{lemma} \label{lem the equi before after DeTurck}
(i) Let $(g(t), p(t))$ be a complete solution of (\ref{eq mod cbf DeTurck flow}) on manifold $M^n$ 
with initial metric $g_0$ whose  scalar curvature  $S_{g_0} = s_0$. Let $\varphi_t$ be the solution 
of (\ref{eq one parame diffeom}). We define $\hat{g}(t)= \varphi_t^*g(t) $ and $\hat{p}(x,t) =
p (\varphi_t(x), t)$. Then $(\hat{g}(t), \hat{p}(t))$ is a solution of the modified CBF (\ref{eq cbf 
ellp Baqua and Helli}) with initial condition $g(0) = g_0$.

\noindent  (ii)  Let $(\hat{g}(t), \hat{p}(t))$ be a solution of  
(\ref{eq cbf ellp Baqua and Helli}) with initial metric $g_0$ whose  scalar curvature  $S_{g_0} = s_0$.
As in \cite[p.117]{CLN6} we conclude that equation (\ref{eq one parame diffeom}) is equivalent to
fourth order parabolic equation of maps $\phi_t: M \to M $
\begin{equation} \label{detruck harmonic map cbf}
\partial_t \phi_t =-\Delta_{\hat{g}(t)} \Delta_{\hat{g}(t), \tilde{g}} \phi_t + \frac{n-2}{2(n-1)}
\nabla_{\hat{g}(t)} S_{\hat{g}(t)},
\end{equation}
where $\Delta_{\hat{g}(t), \tilde{g}}$ is the map Laplacian. 
If we have a solution of (\ref{detruck harmonic map cbf}) with $\phi_0=\operatorname{Id}$, 
then one can verify that $((\phi_t^{-1})^* \hat{g}(t),\hat{p}(\phi_t^{-1}(x),t) )$ 
is a solution of  (\ref{eq mod cbf DeTurck flow}) with initial metric $g_0$.
\end{lemma}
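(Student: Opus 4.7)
The plan is to prove the two parts of Lemma \ref{lem the equi before after DeTurck} by standard DeTurck-type pullback computations. The crux in both parts is the naturality of the Bach tensor, the Schouten tensor, the scalar curvature, the Laplacian, and the Hessian under diffeomorphisms, together with the identification of the extra piece in $W_g$ with a higher-order tension field of the identity map.

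For part (i), the plan is to begin from the pullback identity
\[
\partial_t(\varphi_t^* g(t)) \;=\; \varphi_t^* \bigl( \partial_t g(t) + \mathcal{L}_{-W_{g(t)}} g(t) \bigr),
\]
which follows directly from $\partial_t \varphi_t = -W_{g(t)} \circ \varphi_t$. Substituting the first equation of (\ref{eq mod cbf DeTurck flow}) cancels the Lie derivative term and yields
\[
\partial_t \hat g(t) \;=\; 2(n-2)\, \varphi_t^* \Bigl( B(g) + \tfrac{1}{2(n-1)(n-2)} (\Delta_g S_g)\, g + p\, g \Bigr).
\]
Diffeomorphism invariance of each factor on the right then produces the first equation of (\ref{eq cbf ellp Baqua and Helli}) with $\hat g = \varphi_t^* g$ and $\hat p = p \circ \varphi_t$. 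The elliptic equation for $\hat p$ is obtained by pulling the second equation of (\ref{eq mod cbf DeTurck flow}) back through $\varphi_t$; all the differential operators commute with pullback, the constant $s_0$ is preserved, and $\hat g(0) = g_0$ follows from $\varphi_0 = \operatorname{Id}$.

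For part (ii), the plan is to reinterpret the ODE (\ref{eq one parame diffeom}) for $\varphi_t$ as a PDE for an inverse family of diffeomorphisms $\phi_t$. The classical observation, applied at the identity, is that
\[
g^{ij}\bigl(\Gamma_{ij}^k(g) - \Gamma_{ij}^k(\tilde g)\bigr) \;=\; -(\Delta_{g,\tilde g} \operatorname{Id})^k,
\]
so that the first term of $W_g$ in (\ref{eq vector field w def}), evaluated along $\phi_t = \operatorname{Id}$, equals $(\Delta_g \Delta_{g,\tilde g} \operatorname{Id})^k$. The argument from \cite[p.\,117]{CLN6} (adapted by applying $\Delta_{\hat g}$ one extra time because our flow is fourth order) lifts this identity to a time-dependent family of diffeomorphisms, while the gradient term $\frac{n-2}{2(n-1)} \nabla_g S_g$ is a pure target gradient and is transported as-is. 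Given a solution $\phi_t$ of (\ref{detruck harmonic map cbf}) with $\phi_0 = \operatorname{Id}$, the remaining step is to reverse the computation of part (i): set $g(t) = (\phi_t^{-1})^* \hat g(t)$ and $p(x,t) = \hat p(\phi_t^{-1}(x), t)$, verify that $\varphi_t := \phi_t^{-1}$ satisfies (\ref{eq one parame diffeom}), and pull (\ref{eq cbf ellp Baqua and Helli}) back through $\varphi_t$ to recover (\ref{eq mod cbf DeTurck flow}).

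The hard part will be the conversion in part (ii) between the target-living ODE (\ref{eq one parame diffeom}) and the geometric PDE (\ref{detruck harmonic map cbf}) for maps into a fixed background $(M, \tilde g)$. In particular, one must carefully track how $\Delta_g$ acting on $\Gamma(g) - \Gamma(\tilde g)$ transforms when $\operatorname{Id}$ is replaced by a genuine nontrivial map, and how passing from $\varphi_t$ to $\phi_t = \varphi_t^{-1}$ interchanges the source and target metrics in the map Laplacian. Once this naturality is in place, everything reduces to bookkeeping with the naturality of Bach, Schouten, and scalar curvature that already drives part (i).
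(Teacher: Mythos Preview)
Your proposal is correct and follows essentially the same approach as the paper's proof: for part (i) you use the pullback identity $\partial_t(\varphi_t^* g) = \varphi_t^*(\partial_t g - \mathcal{L}_{W_g} g)$ together with naturality of the geometric operators, exactly as the paper does, and for part (ii) you spell out the standard identification of $g^{ij}(\Gamma_{ij}^k(g) - \Gamma_{ij}^k(\tilde g))$ with the map Laplacian and reverse the computation of (i), which is precisely what the paper means by ``calculations similar to those in (i).'' If anything, your treatment of part (ii) is more explicit than the paper's one-line dismissal.
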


\begin{proof} 
(i) A direct computation gives us:
\begin{align*}
& \partial_t \hat{g}(t)= \varphi_t^* (\partial_t g(t)) + \partial_s|_{s=0} 
\left ( \varphi_{t+s}^*g(t)  \right )  \\
= &  2(n-2)  \varphi_t^* \left ( B(g)+ \frac{1}{2(n-1)(n-2)} (\Delta_g S_g) g 
+p g \right) +  \varphi_t^* ( \mathcal{L}_{W_{g}}g ) - \mathcal{L}_{(
\varphi_t^{-1})_* W_{g}} (\varphi_t^* g )  \\
=& 2(n-2)   \left ( B(\varphi_t^*g)+ \frac{1}{2(n-1)(n-2)} (\Delta_{\varphi_t^*g} 
S_{\varphi_t^*g})  \varphi_t^*g+\hat{p} \varphi_t^*g \right ).
\end{align*}
This verifies the first equation in (\ref{eq cbf ellp Baqua and Helli}). 
The second equation in (\ref{eq cbf ellp Baqua and Helli}) for $ \hat{p}(t)$  follows from
 the second equation in (\ref{eq mod cbf DeTurck flow}) for $p(t)$ directly.

(ii) This follows from calculations which are similar to those in (i).
\end{proof}

\vskip .2cm
We need some preparations before we engage into the actual proof of the short-time 
existence of CBF  (\ref{eq cbf eq p ellipt}).
Below the H\"{o}lder exponent $\alpha \in (0,1)$.
 We have the following inequality (see \cite[Lemma 3.5]{LQZ14} along with the definitions of 
  H\"{o}lder spaces and Sobolev spaces for tensors).
  
\begin{lemma} \label{lem h time t lip property cbf}
Assume $k \geq 2$. There is a constant $C$ independent of $T$ such that for 
any $t_1, t_2 \in [0,T]$, we have
\begin{equation*}
\|h(\cdot, t_1)- h( \cdot, t_2) \|_{C^{k-2 +\alpha}(M)} \leq C|t_1 -t_2| \cdot \| h \|_{C^{k+\alpha,1}}
\end{equation*}
for any tensor $h \in C^{k +\alpha,1}(M \times [0,T])$.
\end{lemma}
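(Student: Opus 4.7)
The plan is to prove the inequality directly via the fundamental theorem of calculus in the time variable, exploiting the fact that the parabolic H\"{o}lder norm $\|h\|_{C^{k+\alpha,1}(M\times[0,T])}$ by definition controls $\partial_t h$ in the spatial H\"{o}lder norm $C^{k-2+\alpha}(M)$ uniformly in $t$ (this is the convention in \cite{LQZ14}, corresponding to a second-order parabolic scaling in which one time derivative costs two spatial derivatives, and explains the hypothesis $k\ge 2$).

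Concretely, for fixed $x\in M$ and $t_1, t_2 \in [0,T]$ I would write
\begin{equation*}
h(x, t_1) - h(x, t_2) = \int_{t_2}^{t_1} \partial_t h(x, s)\, ds.
\end{equation*}
Since $\partial_t h(\cdot, s)$ is continuous in $s$ with values in $C^{k-2+\alpha}(M)$, the $k-2$ spatial covariant derivatives and the residual $\alpha$-H\"{o}lder seminorm all commute with the time integral, and Minkowski's inequality yields
\begin{equation*}
\|h(\cdot, t_1) - h(\cdot, t_2)\|_{C^{k-2+\alpha}(M)} \leq \int_{t_2}^{t_1} \|\partial_t h(\cdot, s)\|_{C^{k-2+\alpha}(M)}\, ds \leq |t_1-t_2| \sup_{s\in[0,T]} \|\partial_t h(\cdot, s)\|_{C^{k-2+\alpha}(M)}.
\end{equation*}
The last supremum is dominated by $\|h\|_{C^{k+\alpha,1}}$ with a constant depending only on the definition of that norm and not on $T$, which completes the argument.

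I do not anticipate any substantive obstacle: the estimate collapses to Minkowski's inequality once the parabolic H\"{o}lder convention is unpacked. The only items worth verifying are (i) the legitimacy of commuting finitely many spatial covariant derivatives with the time integral, which is immediate from the $C^{k+\alpha,1}$ regularity of $h$, and (ii) the $T$-independence of the constant, which is automatic because both sides of the key bound on $\partial_t h$ are sup-type norms over $[0,T]$ and no duration-dependent interpolation is invoked.
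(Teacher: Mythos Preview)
Your argument is correct and is exactly the standard proof; the paper itself does not supply a proof but simply cites \cite[Lemma 3.5]{LQZ14}, whose argument is the one you have written down. Your identification of the second-order parabolic convention (so that $\partial_t h\in C^{k-2+\alpha,0}$ is part of the $C^{k+\alpha,1}$-norm) is also consistent with how the lemma is applied later in the paper, where it is used with $k=6$ on $v\in C^{6+\alpha,1}$ to obtain a $C^{4+\alpha}$ Lipschitz estimate in time.
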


As the starting point to solve the CBF  (\ref{eq cbf eq p ellipt}) we need
the solvability of linear parabolic  system (see \cite{Ha75}, \cite[p.424]{LQZ14},
 or \cite[Proposition 3.2 and 3.3]{BH11}).

\begin{lemma} \label{lem sola standard parabolic linear system high order}
(i) Let $L$ be a linear elliptic differential operator of order $2m$ with $C^{\alpha,0}$-coefficients 
on a closed manifold $M^n$ acting on tensors (see \cite[p.2195]{BH11} for the local expression
 of $L$). Then for every $f \in C^{\alpha, 0} (M \times [0,T] )$ there is a unique solution
  $ u \in C^{2m+\alpha, 1} (M \times [0,T] )$ of 
\[
\partial_t u =Lu + f, \quad u(0, \cdot) =0.
\]
And there is a constant $C$ such that 
\[
\| u \|_{C^{2m+\alpha,1}} \leq C \|f \|_{C^{\alpha,0}}
\]
for all $f \in C^{\alpha, 0} (M \times [0,T] )$.

(ii) Moreover if the coefficient functions of operator $L$ and $f$ in (i) have better 
regularity of $C^{k+\alpha,0}$, then $ u \in C^{2m+k+\alpha, 1} (M \times [0,T] )$
 and we have Schauder estimate 
\[
\| u \|_{C^{2m+k+\alpha,1}} \leq C \|f \|_{C^{k+\alpha,0}}.
\]
\end{lemma}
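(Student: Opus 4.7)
The plan is to prove the lemma by the classical combination of a priori Schauder estimates with the continuity method, adapted to higher-order parabolic systems for tensor-valued unknowns on a closed manifold, in the spirit of Solonnikov's treatment of Douglis--Nirenberg systems. My first goal would be the a priori Schauder estimate
\[
\|u\|_{C^{2m+\alpha,1}} \leq C\bigl(\|f\|_{C^{\alpha,0}} + \|u\|_{C^0}\bigr)
\]
for any smooth solution of $\partial_t u = Lu + f$ with $u(\cdot,0)=0$. Choose a finite atlas of $M$ with subordinate partition of unity; in each chart the equation becomes a parabolic system of order $2m$ on a Euclidean slab. Freeze the coefficients of $L$ at a reference point to reduce to a constant-coefficient operator, whose fundamental solution can be built via Fourier transform and satisfies the classical parabolic Schauder bounds. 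The variable-coefficient estimate then follows from the usual freezing-plus-perturbation scheme, with interpolation inequalities absorbing lower-order terms, and patching against the partition of unity produces the global bound. The $\|u\|_{C^0}$ term is removed by a Gr\"onwall argument on the evolution of an appropriate energy, using that $u(\cdot,0)=0$ and that the coefficients of $L$ are bounded on $[0,T]$.

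For existence I would run the continuity method. Connect $L$ to a self-adjoint model operator $L_0 := -(-\Delta_{\tilde g})^m$ acting diagonally on the tensor bundle (with $\tilde g$ a fixed smooth background metric) via the homotopy $L_s := (1-s)L_0 + sL$, $s \in [0,1]$. Each $L_s$ is elliptic of order $2m$ with $C^{\alpha,0}$ coefficients, so the estimate above applies uniformly in $s$. Solvability at $s=0$ follows from Duhamel's formula with the analytic semigroup $e^{tL_0}$ (or equivalently from spectral theory for the self-adjoint $L_0$ on the closed manifold). Openness of the solvable set $\mathcal{S} \subset [0,1]$ is a contraction-mapping argument in $C^{2m+\alpha,1}$ using the uniform estimate, and closedness follows from Arzel\`a--Ascoli combined with the same estimate, so $1 \in \mathcal{S}$. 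Uniqueness is the Schauder estimate applied to the difference of two solutions with vanishing source and initial data. For part (ii), if the coefficients and $f$ lie in $C^{k+\alpha,0}$, then $\nabla u$ satisfies a parabolic system of the same form with source in $C^{k-1+\alpha,0}$; iterating the base estimate $k$ times yields $u \in C^{2m+k+\alpha,1}$ with the claimed bound.

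The main obstacle I anticipate is verifying strong (Legendre--Hadamard) ellipticity uniformly along the homotopy $L_s$ in the tensor-valued setting. For the symmetric model $L_0 = -(-\Delta_{\tilde g})^m$ it is immediate, but for a general tensor system the principal symbols of $L_0$ and $L$ need not commute, so injectivity of the symbol of $L_s$ at intermediate $s$ requires a separate pointwise check on the cotangent sphere. In the CBF application of this lemma the analogous issue is settled upstream by the careful choice of the modifying vector field $W_g$ in \eqref{eq vector field w def}, designed precisely to make the principal symbol of the DeTurck-modified flow strongly elliptic.
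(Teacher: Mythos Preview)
The paper does not actually prove this lemma: it is stated as a known result with references to \cite{Ha75}, \cite[p.424]{LQZ14}, and \cite[Proposition 3.2 and 3.3]{BH11}, and is used as a black box throughout \S\ref{subsect lineard DeTurck mod-cbf}. So there is no ``paper's own proof'' to compare against; your proposal is effectively a reconstruction of the classical existence theory for higher-order linear parabolic systems that those references supply.

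That said, your outline is sound and follows the standard route (Solonnikov-type Schauder estimates via freezing coefficients, then continuity method). Two remarks. First, the obstacle you flag about ellipticity along the homotopy $L_s = (1-s)L_0 + sL$ is real for general Petrowsky-parabolic systems, but it dissolves under the strong (Legendre--Hadamard) ellipticity hypothesis that the cited references assume: the principal symbol of $L_s$ is then a convex combination of positive-definite forms on the fibre, hence itself positive-definite. The lemma as stated in the paper implicitly carries this hypothesis through the reference to \cite[p.2195]{BH11}, where the relevant symbol condition is spelled out. Second, your suggestion to remove the $\|u\|_{C^0}$ term by a Gr\"onwall energy argument is a bit loose for a $2m$-th order operator with merely $C^{\alpha,0}$ coefficients; the cleaner route is to exploit the zero initial data and the short-time smallness of the parabolic H\"older norm to absorb the lower-order term directly into the left-hand side on a small time interval, then iterate. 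Either way, the result is standard and your sketch would serve as an adequate substitute for the citations.
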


Because  (\ref{eq mod cbf DeTurck flow}) is a coupled system of $(g(t), p(t))$, 
we further need to reduce it to an equation of $g(t)$ only and then solve the resulting
 equation for the short-time existence. The following simple lemma is \cite[Lemma 3.4]{LQZ14}.

\begin{lemma} \label{lem solve the ellipt eq of p}
(i) Suppose that ${g}(t), \, t \in [0,T]$, is a family of $C^{1+\alpha, 0}$-metric
 such that the elliptic operator $(n-1) \Delta_{{g}(t)} +s_0$ is invertible for each $t$. 
 Then there is a constant  $C$ such that for each $\gamma \in  C^{\alpha,0}(M \times [0,T])$ 
 equation 
\[
\left ( (n-1) \Delta_{{g}(t)} +s_0 \right ) p(t)  = \gamma(t)
\]
has a unique solution $p \in C^{2+\alpha,0}(M \times [0,T])$, and  
\[
\| p\|_{C^{2+\alpha,0} } \leq C \| \gamma \|_{C^{\alpha,0}}.
\]

(ii) Moreover if ${g}(t)$ is in $C^{k-1+\alpha, 0}$ and $\gamma$ is in $C^{k-2+\alpha, 0}$
 for some $k \geq 2$, then we have the solution $p \in C^{k+\alpha,0}(M \times [0,T])$ and estimate
\[
\| p\|_{C^{k+\alpha,0} } \leq C \| \gamma \|_{C^{k-2+\alpha,0}}.
\]
\end{lemma}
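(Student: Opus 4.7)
The plan is to apply standard elliptic Schauder theory pointwise in $t$ and then upgrade to a time-continuous estimate using the continuity of $g$ and $\gamma$ in $t$. For fixed $t \in [0,T]$, the operator $L_t \doteqdot (n-1)\Delta_{g(t)} + s_0$ is a linear second-order elliptic operator on the closed manifold $M$ whose coefficients are continuous functions of $g(t)$ and its first derivatives, hence lie in $C^\alpha(M)$. By the invertibility assumption, standard elliptic Schauder theory on closed manifolds yields a unique $p(\cdot,t) \in C^{2+\alpha}(M)$ with $\|p(\cdot,t)\|_{C^{2+\alpha}} \leq C_t \|\gamma(\cdot,t)\|_{C^\alpha}$.

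The first substantive point is that $C_t$ can be chosen uniform in $t \in [0,T]$. Since $g \in C^{1+\alpha,0}$, the coefficients of $L_t$ vary continuously in $C^\alpha(M)$, and the set of invertible operators from $C^{2+\alpha}(M)$ to $C^\alpha(M)$ is open in the operator norm; combined with the compactness of $[0,T]$, this produces a uniform bound $C = \sup_{t \in [0,T]} C_t < \infty$.

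To establish continuity of $p$ in $t$, I would subtract the equations at two times to obtain
\[
L_{t_1}\bigl(p(\cdot,t_1) - p(\cdot,t_2)\bigr) = \bigl(\gamma(\cdot,t_1) - \gamma(\cdot,t_2)\bigr) + (L_{t_2} - L_{t_1})\,p(\cdot,t_2),
\]
and then apply the uniform $C^{2+\alpha}$-estimate from the previous step. The first term on the right tends to $0$ in $C^\alpha$ as $t_1 \to t_2$ by $\gamma \in C^{\alpha,0}$, and the second term is controlled by a $C^\alpha$-seminorm of $g(t_1) - g(t_2)$ times the (uniformly bounded) quantity $\|p(\cdot,t_2)\|_{C^{2+\alpha}}$, which likewise vanishes since $g \in C^{1+\alpha,0}$. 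This yields $p \in C^{2+\alpha,0}(M \times [0,T])$ together with the claimed estimate.

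For Part (ii), if $g(t) \in C^{k-1+\alpha,0}$ then the coefficients of $L_t$ lie in $C^{k-2+\alpha,0}$, and the higher-order Schauder estimate upgrades the above argument verbatim to give $p \in C^{k+\alpha,0}$ with the corresponding bound. The only subtle step is the uniformity of the Schauder constant across $[0,T]$; everything else is routine elliptic bookkeeping.
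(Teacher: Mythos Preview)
Your proposal is correct and follows the standard route. The paper itself does not supply a proof of this lemma but simply quotes it as \cite[Lemma~3.4]{LQZ14}, calling it a ``simple lemma''; your argument (Schauder at each fixed $t$, uniformity of the constant via compactness of $[0,T]$ and continuity of $t\mapsto L_t$, then continuity in $t$ by subtracting equations) is exactly what one expects that reference to contain. One minor imprecision: in the difference estimate the term $(L_{t_2}-L_{t_1})p(\cdot,t_2)$ is controlled by the $C^{1+\alpha}$-norm of $g(t_1)-g(t_2)$ (since the coefficients of $\Delta_g$ involve first derivatives of $g$), not merely its $C^{\alpha}$-seminorm, but this does not affect the argument since $g\in C^{1+\alpha,0}$.
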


Consider metric $g \in C^{4+\alpha}(M)$ such that the operator $(n-1) \Delta_{g}+s_0$ is  invertible,
we may define an operator $\mathcal{P}( \cdot )$ such that  $\mathcal{P}(g) \in 
  C^{2+\alpha}(M)$ is the solution $p$ of the equation 
\begin{equation} \label{eq def of mathcal P of p}
 \left ( (n-1)\Delta_{g} + s_0 \right ) p =- (n-2) A(g) \cdot B(g )  
   + \nabla_{g}^2 \cdot B(g) \in C^{\alpha}(M).
\end{equation}

\begin{lemma} \label{lem lip property of mathcal P op}
Suppose that $M^n$ is a closed manifold with metrics $g \in C^{4+\alpha, 0}(M \times [0, T])$.
 Suppose that the elliptic operator $(n-1)\Delta_{g(t)} +s_0$ is invertible for each $t \in [0, T]$. 
 Then there are constants  $C >0$ and small $\delta_0 >0$ depending on $g$ such that 
\begin{equation} \label{eq Holder P op bound}
\|\mathcal{P}(g_1) -\mathcal{P}(g_2) \|_{C^{2+\alpha, 0}}  \leq C \| g_1 - g_2 \|_{C^{ 4+ \alpha, 0}}
\end{equation}
for all $g_i \in C^{4+\alpha, 0}(M \times [0, T])$ which satisfies $\|g_i -  g\|_{C^{4+\alpha,0}}
 \leq \delta_0$ for $i=1, 2$. We also have in Sobolev norm
\begin{equation} \label{eq sobolev P op bound}
\|\mathcal{P}(g_1) -\mathcal{P}(g_2) \|_{W^{2, 2}}  \leq C \| g_1 - g_2 \|_{W^{ 4, 2}}.
\end{equation}
\end{lemma}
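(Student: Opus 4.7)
The plan is to write $p_i = \mathcal{P}(g_i)$, subtract the two defining elliptic equations so that the difference $p_1 - p_2$ satisfies a single elliptic equation with respect to one fixed invertible operator, and then apply the Schauder solvability estimate from Lemma \ref{lem solve the ellipt eq of p} to control the right-hand side. Before doing this I would first check that for $\delta_0 > 0$ sufficiently small (depending on $g$) and for $\|g_i - g\|_{C^{4+\alpha,0}} \leq \delta_0$, the operator $(n-1)\Delta_{g_i(t)} + s_0$ is still invertible for each $t$. This is a standard Fredholm perturbation argument: the operators $(n-1)\Delta_{g_i(t)} + s_0$ differ from $(n-1)\Delta_{g(t)} + s_0$ by a lower-order difference whose coefficients are continuous in $g$ up to second derivatives, and invertibility is stable under such small perturbations on a closed manifold.

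Let $\gamma(g) := -(n-2) A(g) \cdot B(g) + \nabla_g^2 \cdot B(g)$, which by Lemma \ref{eq div B is fourth order} is a universal polynomial expression in $g^{-1}$ and derivatives of $g$ up to order $4$. Subtracting the two equations satisfied by $p_1$ and $p_2$ yields
\begin{equation*}
\bigl((n-1)\Delta_{g_1} + s_0\bigr)(p_1 - p_2) = \bigl[\gamma(g_1) - \gamma(g_2)\bigr] - (n-1)(\Delta_{g_1} - \Delta_{g_2}) p_2.
\end{equation*}
Applying Lemma \ref{lem solve the ellipt eq of p} with the invertible operator $(n-1)\Delta_{g_1} + s_0$ gives
\begin{equation*}
\|p_1 - p_2\|_{C^{2+\alpha,0}} \leq C \bigl(\|\gamma(g_1) - \gamma(g_2)\|_{C^{\alpha,0}} + \|(\Delta_{g_1} - \Delta_{g_2}) p_2\|_{C^{\alpha,0}}\bigr).
\end{equation*}

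Now I would estimate the two terms on the right. For the first, since $\gamma$ is a smooth algebraic combination of $g$, $g^{-1}$, and derivatives of $g$ up to order $4$, a standard telescoping and product-rule argument (using the $C^{4+\alpha,0}$ bound $\|g_i\|_{C^{4+\alpha,0}} \leq \|g\|_{C^{4+\alpha,0}} + \delta_0$) yields
\begin{equation*}
\|\gamma(g_1) - \gamma(g_2)\|_{C^{\alpha,0}} \leq C \|g_1 - g_2\|_{C^{4+\alpha,0}}.
\end{equation*}
For the second, the operator $\Delta_{g_1} - \Delta_{g_2}$ is second-order in $p_2$ with coefficients differing by $g_1 - g_2$ up to first derivatives, so
\begin{equation*}
\|(\Delta_{g_1} - \Delta_{g_2}) p_2\|_{C^{\alpha,0}} \leq C \|g_1 - g_2\|_{C^{1+\alpha,0}} \|p_2\|_{C^{2+\alpha,0}},
\end{equation*}
and by Lemma \ref{lem solve the ellipt eq of p} applied to $g_2$, the norm $\|p_2\|_{C^{2+\alpha,0}}$ is bounded in terms of $\|\gamma(g_2)\|_{C^{\alpha,0}}$, which in turn is bounded by a constant depending only on $g$ and $\delta_0$. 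Combining these estimates yields \eqref{eq Holder P op bound}. The Sobolev estimate \eqref{eq sobolev P op bound} is proved identically, using $L^2$-elliptic regularity in place of the Schauder estimate and replacing H\"older product estimates by Sobolev product estimates (Sobolev embedding ensures $g \in C^{4+\alpha} \hookrightarrow W^{4,\infty}$ so that multiplication by derivatives of $g_i$ is bounded on $L^2$).

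The main obstacle I anticipate is purely bookkeeping: verifying that $\gamma(g_1) - \gamma(g_2)$ really is controlled by the fourth-order norm of $g_1 - g_2$ (and not the sixth-order one as naive differentiation of $B$ might suggest). This is precisely what Lemma \ref{eq div B is fourth order} buys us, since it shows $\nabla^2 \cdot B$ is a fourth-order differential operator in $g$. The invertibility preservation under perturbation is conceptually routine but must be stated carefully to make $\delta_0$ depend only on $g$ and not on $g_1, g_2$.
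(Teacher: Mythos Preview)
Your proposal is correct and follows essentially the same approach as the paper: you subtract the defining equations to obtain a single elliptic equation for $\mathcal{P}(g_1)-\mathcal{P}(g_2)$ with respect to the invertible operator $(n-1)\Delta_{g_1}+s_0$, then invoke Lemma~\ref{lem solve the ellipt eq of p} together with the fact (Lemma~\ref{eq div B is fourth order}) that the right-hand side $\gamma(g)$ involves only fourth-order derivatives of $g$. Your write-up is in fact more detailed than the paper's (which compresses the whole argument into three lines), particularly in spelling out the perturbation argument for invertibility and the estimate of each term on the right-hand side.
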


\begin{proof} 
Let operator $T_1(g) \doteqdot - (n-2) A(g) \cdot B(g) + \nabla_{g}^2 \cdot B(g)$. 
From equation (\ref{eq def of mathcal P of p})  we have
\[
( (n-1) \Delta_{g_1} +s_0)( \mathcal{P} (g_1) -\mathcal{P}(g_2) ) =  (n-1) (\Delta_{g_2} - 
\Delta_{g_1} ) \mathcal{P}(g_2)+ T_1(g_1)-T_1(g_2).
\]
Inequality (\ref{eq Holder P op bound}) follows from Lemma \ref{lem solve the ellipt eq of p}(i).
 Using $L^2$-norm we get (\ref{eq sobolev P op bound}).
\end{proof}

Let $\mathcal{F}(g) \doteqdot 2(n-2) \left (B(g) +  \frac{1}{2(n-1)(n-2)} (\Delta S_{g}) g
 \right)  + \mathcal{L}_{W_{g}} g.$ We will consider the following decoupled DeTurck
  modified CBF of $g(t)$ induced from  (\ref{eq mod cbf DeTurck flow})
\begin{equation} \label{eq decoupled DeTur CBF}
\mathcal{M}(g(t)) \doteqdot \partial_t g(t)  - \mathcal{F}(g(t)) -2(n-2)\mathcal{P}(g(t))g(t) =0
\end{equation}
with the initial condition $g(0) =g_0$, assuming scalar curvature $S_{g_0}=s_0$.
Clearly this flow is equivalent to flow (\ref{eq mod cbf DeTurck flow}).

\vskip .1cm
Next we turn to prove the short-time existence of solutions of the
 linearized equation of  (\ref{eq decoupled DeTur CBF}) by contraction mapping theorem.

\subsection{Linearization of the decoupled flow (\ref{eq decoupled DeTur CBF})} 
\label{subsect lineard DeTurck mod-cbf}

We now compute the linearization of the decoupled DeTurck modified CBF
 (\ref{eq decoupled DeTur CBF}) by computing the linearization of operators $\mathcal{F}(g)$ 
 and $\mathcal{P}(g)$ separately. 
Let $g_s= g+sh, \, s \in (-\epsilon, \epsilon)$, for some $(0,2)$-tensor $h$.
Recall that Bach tensor has the schematic expression (\ref{eq Bach definition}),
from the linearization formula (\cite[(2.47)]{CLN6} used in the DeTurck trick for Ricci flow, 
we get (we are sketchy on the details of some tedious but routine calculations here)
\begin{equation}\label{eq linearization B1 answer}
\delta_h \mathcal{F}(g)  \doteqdot  \left . \frac{d}{ds}  \right |_{s=0}  \mathcal{F}(g_s) =
 - \Delta_g \Delta_{L,g} h + \sum_{a=0}^3 M_a(g) * \nabla_g^{a} h,
\end{equation}
where the Lichnerowicz operator is defined by
\begin{equation*}
\Delta_L h_{ij} = \Delta_{L,g} h_{ij}  = \Delta_g h_{ij} +2R_{iklj}h_{kl} -R_{ik}h_{kj}-R_{jk}h_{ki},
\end{equation*}
$M_a(g)$ depends on $g$ up to the fourth derivatives, and $*$ is some tensor contraction 
operation by metric $g$ and its inverse.

We define  $\mathcal{P}^{\prime}_g(h)= \delta_h
 \mathcal{P}(g) \doteqdot  \left . \frac{d}{ds}  \right |_{s=0} \mathcal{P}(g_s) $. 
 To find $\mathcal{P}^{\prime}_g(h)$ we compute the linearization of the both sides 
 in equation (\ref{eq def of mathcal P of p}) with $g =g_s$. 
 Let operator $\mathcal{L} \doteqdot (n-1) \Delta_{g}+s_0 $, then
\begin{equation*}
\delta_h \left( \mathcal{L} \mathcal{P}(g) \right ) = - (n-1) \nabla_i \nabla_j  \mathcal{P}(g) 
h_{ij}  - \frac{n-1}{2}\nabla_j  \mathcal{P}(g) \left ( 2 \nabla_i h_{ij} -\nabla_j H \right ) 
+ \mathcal{L} \mathcal{P}^{\prime}_g(h)
\end{equation*}
where $H =g_{ij}h_{ij}$ (\cite[p.547]{CLN6}).
From the right-hand-side of (\ref{eq def of mathcal P of p}), we get
\begin{equation*}
\delta_h \left ( -(n-2) A(g) \cdot B(g) + \nabla_g^2 \cdot  B (g)  \right ) = 
\sum_{a=0}^{4} P_a(g) *  \nabla^{a} h ,
\end{equation*}
where $P_a(g)$'s are tensors depending on  $g$ up to the fourth derivatives. 
Hence we have
\begin{equation}\label{eq linearization mathcal P answer}
\mathcal{P}^{\prime}_g(h) =  \mathcal{L} ^{-1}  \left ( (n-1)  \nabla_i \nabla_j
  \mathcal{P}(g) h_{ij}+ \frac{n-1}{2} \nabla_j  \mathcal{P}(g) \left (2 \nabla_i h_{ij}
   -\nabla_j H \right )  + \sum_{a=0} ^{4} P_a(g) * \nabla^{a} h \right ),
\end{equation}
where we have assumed that operator $\mathcal{L} $ is invertible.
Hence we have
\begin{equation}\label{eq DM oper def akl}
\delta_{h(t)} \mathcal{M}(g(t)) = \partial_t h + \Delta_g \Delta_{L,g} h - \sum_{a=0}^3 
M_a(g) * \nabla_g^{a} h -2(n-2) \mathcal{P}^{\prime}_g(h) g -2(n-2)  \mathcal{P}(g) h.
\end{equation}

The remaining proof for the short-time existence of the decoupled DeTurck modified CBF
(\ref{eq decoupled DeTur CBF})  is very close to the proof for that of CRF
 in \cite[\S 3.3.2-3.3.3]{LQZ14}. First we need to solve the linearized flow
\begin{equation}\label{eq linearized deturck pluged}
   \begin{cases}
        & \partial_t h + \Delta_g \Delta_L h - \sum_{a=0}^3 M_a(g) * \nabla_g^{a} h -2(n-2)
         \mathcal{P}^{\prime}_g(h) g -2(n-2)  \mathcal{P}(g) h  = \gamma\\
        & h (\cdot, 0) = 0
    \end{cases}
\end{equation}
for each $\gamma \in C^{\alpha,0}(M \times [0,T])$. Using Lemma \ref{lem sola standard
 parabolic linear system high order}(i) we have
 
\begin{proposition} \label{lem 4}
Suppose that $g(t)$, $t\in [0, T]$,  is a family of  $C^{4+\alpha, 0}$-metrics such that the
 elliptic operator $(n-1)\Delta_{g(t)}+s_0$ is invertible for all $t$. Then for each $\gamma 
 \in C^{\alpha, 0}(M \times [0, T])$ the initial value problem (\ref{eq linearized deturck pluged})
  has a unique solution $h \in C^{4+ \alpha, 1}(M \times [0, T])$. 
  Moreover there is a constant $C$ such that
\begin{equation}\label{est of sol in haf cbf}
\|h\|_{C^{4+\alpha,1}}\leq  C \| \gamma \|_{ C^{\alpha, 0}}
\end{equation}
for all $\gamma \in C^{\alpha, 0}(M \times [0, T])$.
\end{proposition}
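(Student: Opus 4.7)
The plan is to solve the linear-but-non-local initial value problem \eqref{eq linearized deturck pluged} by a Banach-space contraction argument built on the Schauder theory of Lemma~\ref{lem sola standard parabolic linear system high order}(i), following the strategy used for conformal Ricci flow in \cite{LQZ14}. The leading term $L_0 \doteqdot -\Delta_g \Delta_{L,g}$ is a linear differential operator of order $2m = 4$ with $C^{\alpha,0}$-coefficients on symmetric $(0,2)$-tensors whose principal symbol is $-|\xi|_g^4\,\mathrm{Id}$; hence $L_0$ is elliptic, and Lemma~\ref{lem sola standard parabolic linear system high order}(i) provides, for any $f \in C^{\alpha,0}(M\times[0,T])$, a unique solution $u \in C^{4+\alpha,1}$ of $\partial_t u = L_0 u + f$ with $u(\cdot, 0) = 0$ and $\|u\|_{C^{4+\alpha,1}} \le C\|f\|_{C^{\alpha,0}}$. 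I would treat all the remaining terms in \eqref{eq linearized deturck pluged}---the local perturbations $\sum_{a=0}^3 M_a(g) * \nabla^a h$ and $\mathcal{P}(g)\,h$ and the non-local term $\mathcal{P}'_g(h)\,g$---as part of the forcing.

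On a short subinterval $[0, T_0] \subset [0, T]$ to be determined, I would work in the Banach space
\[
X_{T_0} \doteqdot \{\, h \in C^{4+\alpha, 1}(M \times [0, T_0]) : h(\cdot, 0) = 0 \,\}
\]
equipped with the $C^{4+\alpha, 1}$-norm, and define an affine map $\Phi : X_{T_0} \to X_{T_0}$ by sending $h$ to the solution $\tilde h = \Phi(h)$ of $\partial_t \tilde h = L_0 \tilde h + F(h)$ with $\tilde h(\cdot, 0) = 0$, where
\[
F(h) \doteqdot \gamma + \sum_{a=0}^3 M_a(g) * \nabla^a h + 2(n-2)\,\mathcal{P}'_g(h)\,g + 2(n-2)\,\mathcal{P}(g)\,h.
\]
Formula \eqref{eq linearization mathcal P answer} together with Lemma~\ref{lem solve the ellipt eq of p}(i) shows that $h \mapsto \mathcal{P}'_g(h)$ is a bounded linear map from $C^{4+\alpha,0}$ into $C^{2+\alpha,0}$, so $F(h) \in C^{\alpha,0}$ and $\Phi$ is well defined, with $\|\Phi(0)\|_{C^{4+\alpha,1}} \le C\|\gamma\|_{C^{\alpha,0}}$.

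The main obstacle is the contraction step. Since $\Phi$ is affine it suffices to bound $\Phi(h_1) - \Phi(h_2)$ in terms of $\eta \doteqdot h_1 - h_2 \in X_{T_0}$. As $\eta(\cdot, 0) = 0$, Lemma~\ref{lem h time t lip property cbf} with $k = 4$ gives $\|\eta\|_{C^{2+\alpha, 0}} \le C T_0 \|\eta\|_{C^{4+\alpha, 1}}$, and the classical spatial interpolation $\|\eta\|_{C^{3+\alpha}} \le C\|\eta\|_{C^{2+\alpha}}^{1/2}\|\eta\|_{C^{4+\alpha}}^{1/2}$ then yields $\|\eta\|_{C^{3+\alpha, 0}} \le C T_0^{1/2}\|\eta\|_{C^{4+\alpha, 1}}$, which controls each of the local perturbations $\sum_{a \le 3} M_a * \nabla^a \eta$ and $\mathcal{P}(g)\,\eta$ in $C^{\alpha, 0}$ by a factor $T_0^{1/2}$. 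For the non-local term $\mathcal{P}'_g(\eta)\,g$, a direct application of Lemma~\ref{lem solve the ellipt eq of p}(i) to \eqref{eq linearization mathcal P answer} gives only $\|\mathcal{P}'_g(\eta)\|_{C^{2+\alpha,0}} \le C\|\eta\|_{C^{4+\alpha,0}}$, which carries no smallness; to recover smallness one redistributes two derivatives in the source of $\mathcal{L}^{-1}$ via integration by parts so that at most two spatial derivatives fall on $\eta$, and invokes the standard $C^{\alpha}$-continuity of the zeroth-order pseudodifferential operator $\mathcal{L}^{-1}\nabla^2$ to obtain $\|\mathcal{P}'_g(\eta)\|_{C^{\alpha, 0}} \le C \|\eta\|_{C^{2+\alpha, 0}} \le CT_0 \|\eta\|_{C^{4+\alpha, 1}}$. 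Combining these estimates with the Schauder bound from Lemma~\ref{lem sola standard parabolic linear system high order}(i) applied to $\Phi(h_1)-\Phi(h_2)$ yields
\[
\|\Phi(h_1) - \Phi(h_2)\|_{C^{4+\alpha, 1}} \le C T_0^{1/2} \|h_1 - h_2\|_{C^{4+\alpha, 1}},
\]
so that $\Phi$ is a contraction once $T_0$ is chosen small. Banach's fixed-point theorem produces the unique solution on $[0, T_0]$, and the estimate \eqref{est of sol in haf cbf} follows by combining $\|\Phi(0)\|_{C^{4+\alpha, 1}} \le C\|\gamma\|_{C^{\alpha, 0}}$ with the contraction inequality. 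Finally, since the equation is linear in $h$, finitely many iterations of this argument on adjacent subintervals of length $T_0$ extend existence, uniqueness, and the estimate to the entire interval $[0, T]$.
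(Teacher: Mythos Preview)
Your overall architecture---freeze the non-local term, solve a linear parabolic problem, and run a contraction---is the same as the paper's, but the implementation differs in two significant ways, and one of them leaves a gap.

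First, the paper absorbs the \emph{local} lower-order perturbations $\sum_{a=0}^3 M_a(g)*\nabla^a h$ and $2(n-2)\mathcal{P}(g)h$ into the parabolic operator rather than into the forcing, so that no smallness is ever required for those terms; only the non-local piece $2(n-2)\mathcal{P}'_g(\tilde h)g$ is frozen and treated as forcing. The fixed-point map $\Psi$ is then defined on the Banach space $E_1([0,T^*])=\{\tilde h\in C^{4+\alpha,0}:\tilde h(\cdot,0)=0\}$ with the $C^{4+\alpha,0}$-norm (not $C^{4+\alpha,1}$). Second---and this is the key mechanism you are missing---smallness for the non-local term is obtained not by any $C^\alpha$-mapping property of $\mathcal{L}^{-1}\nabla^2$, but by exploiting the two-derivative \emph{gain} of $\mathcal{P}'_g$: from \eqref{eq linearization mathcal P answer} and Lemma~\ref{lem solve the ellipt eq of p}(i) one has $\|\mathcal{P}'_g(\tilde h_1)-\mathcal{P}'_g(\tilde h_2)\|_{C^{2+\alpha,0}}\le C\|\tilde h_1-\tilde h_2\|_{C^{4+\alpha,0}}$, so the forcing on the right-hand side lies in $C^{2+\alpha,0}$. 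The higher-regularity Schauder estimate of Lemma~\ref{lem sola standard parabolic linear system high order}(ii) then puts $v=\Psi(\tilde h_1)-\Psi(\tilde h_2)$ in $C^{6+\alpha,1}$, and Lemma~\ref{lem h time t lip property cbf} with $k=6$ converts this into $\|v\|_{C^{4+\alpha,0}}\le CT^*\|\tilde h_1-\tilde h_2\|_{C^{4+\alpha,0}}$. Everything stays within the Schauder framework already set up.

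Your route instead demands the estimate $\|\mathcal{P}'_g(\eta)\|_{C^{\alpha,0}}\le C\|\eta\|_{C^{2+\alpha,0}}$, which you justify by ``redistributing two derivatives via integration by parts'' and invoking $C^\alpha$-boundedness of the order-zero operator $\mathcal{L}^{-1}\nabla^2$. This is the gap. There is no integration by parts available inside $\mathcal{L}^{-1}$; what you actually need is to rewrite $P_4(g)*\nabla^4\eta$ as $\nabla^2(\cdot)+\text{lower order}$ and then cite a Calder\'on--Zygmund/pseudodifferential mapping theorem on H\"older spaces. Neither ingredient is provided by the paper's lemmas, and the rewriting step generates terms like $\nabla^2 P_4(g)*\nabla^2\eta$, whose regularity must be checked against the hypothesis $g\in C^{4+\alpha,0}$ (recall the $P_a$ depend on up to four derivatives of $g$). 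The paper's argument sidesteps all of this by using the smoothing of $\mathcal{L}^{-1}$ in the opposite direction---to raise the regularity of the forcing and then of the solution---rather than trying to lower the order of the source.
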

\begin{proof} 
To use contraction mapping theorem to prove the existence  we consider the Banach space
\[
E_1([0,T^*]) =\left  \{ \tilde{h} \in C^{4+\alpha, 0}(M \times [0, T^*]): 
\tilde{h}( \cdot, 0) = 0 \right \},
\]
where $T^* \in (0,T]$ is a small constant to be chosen below. By Lemma 
\ref{lem sola standard parabolic linear system high order}(i) for a given $\tilde{h}
 \in E_1([0, T^*])$ we can solve the system of linear parabolic equations
\begin{equation} \label{linearized Deturck cbf akls}
\begin{cases}
& \partial_t h + \Delta_g \Delta_L h - \sum_{a=0}^3 M_a(g) * \nabla_g^{a} h-2(n-2)  
\mathcal{P}(g) h  =\tilde{\gamma} \\
& h(\cdot, 0) =0,
\end{cases}
\end{equation}
where $\tilde\gamma \doteqdot \gamma + 2(n-2){\mathcal P}^{\prime}_g(\tilde h) g
 \in C^{\alpha,0}$, hence we may define a map
\begin{equation}\label{eq fixed point map Psi}
\Psi: E_1([0,T^*]) \rightarrow E_1([0,T^*]), \quad \Psi (\tilde{h}) =h \in C^{4+\alpha,1}.
\end{equation}
 
Let $\tilde{h}_i \in E_1([0,T^*]), \, i=1, 2$. Note that if we set $v \doteqdot \Psi(\tilde{h}_1)
 - \Psi(\tilde{h}_2),$ then $v$ satisfies
\begin{equation*}
\begin{cases}
&\partial_t v +  \Delta_g \Delta_L v -\sum_{a=0}^3 M_a(g)* \nabla_g^a v -2(n-2)
 \mathcal{P}(g) v = 2(n-2) ({\mathcal P}_g^{\prime}(\tilde h_1)
 -{\mathcal P}_g^{\prime}(\tilde h_2)) g, \\
& v(\cdot,0) =0.
\end{cases}
\end{equation*}
From (\ref{eq linearization mathcal P answer}) and Lemma \ref{lem solve the ellipt eq of p}(i)
 we have for $g \in C^{4+\alpha, 0}$
\begin{equation} \label{eq cal P deriv Lip prop}
\| \mathcal {P}_g^{\prime}(\tilde{h}_1) - \mathcal{P}_g^{\prime}(\tilde{h}_2) \|_{C^{2
+\alpha,0}} \leq C \|\tilde h_1 - \tilde h_2\|_{C^{4+\alpha,0}},
\end{equation}
then it follows from Lemma \ref{lem sola standard parabolic linear system high order}(ii) that
\begin{equation} \label{eq est v tem use 1}
\|v\|_{C^{6+\alpha,1}} \leq C \|\tilde h_1 - \tilde h_2\|_{C^{4+\alpha,0}}.
\end{equation}
Hence by Lemma \ref{lem h time t lip property cbf} we have
\[
\|v(\cdot, t_1) - v(\cdot, t_2) \|_{C^{4+\alpha}} \leq C\cdot |t_1-t_2|\cdot 
\|\tilde h_1 - \tilde h_2\|_{C^{4+\alpha,0}}.
\]
In particular, using $t_1 =0$ and $v(\cdot, 0)=0$ we get
\begin{equation} \label{eq used for uniqueness linear cal P}
\|\Psi(\tilde h_1) - \Psi(\tilde h_2)\|_{C^{4+\alpha,0}} \leq CT^*
\|\tilde h_1 - \tilde h_2\|_{C^{4+\alpha,0}}
\end{equation}
for all $\tilde{h}_i \in E_1([0,T^*]), \, i=1, 2$.

To apply contraction mapping theorem to $\Psi$  we observe that
\begin{equation} \label{eq thm 5.1.21 est source local}
\|\Psi(\tilde h)\|_{C^{4+\alpha,0}} \leq \|\Psi(0)\|_{C^{4+\alpha,0}} + CT^*\|\tilde h\|_{C^{4+\alpha,0}}
\end{equation}
by (\ref{eq used for uniqueness linear cal P}) and that for some constant $C_0$
\[
\|\Psi(0)\|_{C^{4+\alpha, 1}} \leq C_0 \|\gamma\|_{C^{\alpha,0}}
\]
by Lemma \ref{lem sola standard parabolic linear system high order}(i). 
Let $R=2C_0\|\gamma\|_{C^{\alpha, 0}}$, then when $T^*$ is chosen
 so that $CT^* \leq \frac{1}{2}$, the map
\[
\Psi: \bar{B}_R = \{\tilde{h} \in E_1([0, T^*]): \| \tilde{h} \|_{C^{4+\alpha, 0}}
\leq R\} \, \hookrightarrow  \, \bar{B}_R
\]
is a contractive mapping due to (\ref{eq used for uniqueness linear cal P}).  
We get a fixed point of $\Psi$ on $\bar{B}_R$
 which gives the existence of the solution of equations (\ref{eq linearized deturck pluged}) 
 on time interval $[0,T^*]$.

\vskip .1cm
To see the uniqueness of solutions to (\ref{eq linearized deturck pluged}), 
suppose that $h_1$ and $h_2$ are two solutions, it follows from
 (\ref{eq used for uniqueness linear cal P}) using $\tilde{h}_i =h_i = \Psi(\tilde h_i)$ 
 and  $CT^*  \leq \frac{1}{2}$ that $h_1 -h_2 =0$. 
 
 Because (\ref{eq linearized deturck pluged}) 
 is linear, there will be no short-time blowup, one may extend its solution from $[0, T^*]$ 
 to $[0, T]$ by steps over time intervals of length $T^*$. 
 Note that when we extend the solution to $[T^*, 2T^*]$, we need to make some simple
  adjustment to the equations so that the initial condition at $T^*$ for the new equations is 0. 
  
  The estimate (\ref{est of sol in haf cbf}) follows from the estimate
   (\ref{eq thm 5.1.21 est source local}).
\end{proof}

In summary we have established that the linear operator defined by (\ref{eq DM oper def akl})
\begin{equation} \label{eq 6 alpha delta M invertible 6 to 4}
\delta_{\bullet}\mathcal{M}(g): \left \{ h\in C^{4+\alpha,1}(M \times [0, T]), \, 
h(\cdot, 0) =0 \right \}\to C^{\alpha, 0}( M \times [0, T])
\end{equation}
is an isomorphism, provided that $g=g(t)$ satisfies the assumptions in Proposition \ref{lem 4}.

\subsection{Short-time existence of decoupled DeTurck modified CBF 
(\ref{eq decoupled DeTur CBF}) and proof of the existence part of
Theorem \ref{thm main short time exist cbf}} 
 \label{subsect short time decouple DeTurck cbf}
Now we apply the implicit function theorem \cite[Lemma 3.7]{LQZ14} to the nonlinear map
defined by (\ref{eq decoupled DeTur CBF})
\[
\mathcal {M}: \left \{g \in C^{4+\alpha,1}( M \times  [0, T]), \, 
g(0) = g_0\right \} \to C^{\alpha,0}( M \times [0, T]).
\]
Here $g_0$ is the metric given in Theorem \ref{thm main short time exist cbf} and
we choose the metric $\tilde{g}=g_0$ which is used in (\ref{eq vector field w def}).
We begin with showing that operator $\mathcal{M}$ is continuously differentiable.

\begin{lemma}\label{lem continuity delta M} 
Let $M^n$ be a closed manifold with 
 metrics $g(t)\in C^{4+\alpha, 1}(M \times [0, T])$. Suppose that the elliptic operator
$(n-1)\Delta_{g(t)} +s_0$ is invertible for each $t$. 
Then there is a constant $\delta_0>0$ such that we have the following estimate of the norm of the 
difference of the linear operators
\[
\|\delta_{\bullet} \mathcal {M}(g_1) - \delta_{\bullet} \mathcal {M}(g_2)\|_{L(C^{4+\alpha,1}, 
C^{\alpha, 0})} \leq C\|g_1 - g_2\|_{C^{4+\alpha, 1}}
\]
for  all $g_i \in C^{4+\alpha, 1}(M \times [0, T])$ which satisfies  
$\|g_i -  g\|_{C^{4+\alpha,1}} \leq \delta_0, \, i=1, 2$.
\end{lemma}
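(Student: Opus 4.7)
The approach is to decompose the difference $\delta_h\mathcal M(g_1)-\delta_h\mathcal M(g_2)$ term by term according to the expression (\ref{eq DM oper def akl}), estimate each piece separately in $C^{\alpha,0}$-norm (uniformly in $h$ with $\|h\|_{C^{4+\alpha,1}}\le 1$), and collect the estimates. Concretely, the operator norm we want to bound is the supremum over such $h$ of
\begin{equation*}
\bigl\|[\Delta_{g_1}\Delta_{L,g_1}-\Delta_{g_2}\Delta_{L,g_2}]h-\sum_{a=0}^{3}[M_a(g_1)*\nabla_{g_1}^{a}h-M_a(g_2)*\nabla_{g_2}^{a}h]-2(n-2)[\mathcal P'_{g_1}(h)g_1-\mathcal P'_{g_2}(h)g_2]-2(n-2)[\mathcal P(g_1)-\mathcal P(g_2)]h\bigr\|_{C^{\alpha,0}}.
\end{equation*}

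The first two families are purely local differential expressions whose coefficients are universal algebraic combinations of $g$, $g^{-1}$ and covariant derivatives of $g$ up to order four. Since the map $g\mapsto g^{-1}$ is smooth on a neighborhood of any fixed positive-definite metric, and since pointwise multiplication is a continuous bilinear map $C^{4+\alpha,0}\times C^{4+\alpha,0}\to C^{4+\alpha,0}$, the coefficients of $\Delta_g\Delta_{L,g}$ and of $M_a(g)*\nabla_g^{a}$ are locally Lipschitz in $g\in C^{4+\alpha,0}$ (taking values in $C^{\alpha,0}$). Pairing these coefficient differences with $\nabla^{b}h$, $b\le 4$, which is controlled by $\|h\|_{C^{4+\alpha,0}}$, and using the standard telescoping $A_1 h-A_2 h=(A_1-A_2)h$ in each summand, I obtain an estimate of the first two brackets by $C\|g_1-g_2\|_{C^{4+\alpha,0}}\|h\|_{C^{4+\alpha,0}}$, which is the desired Lipschitz bound for these pieces.

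For the zeroth-order term $[\mathcal P(g_1)-\mathcal P(g_2)]h$, Lemma \ref{lem lip property of mathcal P op} gives $\|\mathcal P(g_1)-\mathcal P(g_2)\|_{C^{2+\alpha,0}}\le C\|g_1-g_2\|_{C^{4+\alpha,0}}$, and multiplying by $h\in C^{4+\alpha,0}$ in the $C^{\alpha,0}$-norm yields the required bound. The main obstacle is the nonlocal term $\mathcal P'_g(h)$, which involves inverting the $g$-dependent operator $\mathcal L_g=(n-1)\Delta_g+s_0$ in (\ref{eq linearization mathcal P answer}). I will handle it as follows: set $w_i=\mathcal P'_{g_i}(h)$ and write out the defining equation
\begin{equation*}
\mathcal L_{g_i}w_i=(n-1)\nabla_i\nabla_j\mathcal P(g_i)\,h_{ij}+\tfrac{n-1}{2}\nabla_j\mathcal P(g_i)(2\nabla_i h_{ij}-\nabla_j H)+\sum_{a=0}^{4}P_a(g_i)*\nabla^{a}h=:F_i.
\end{equation*}
Then $\mathcal L_{g_1}(w_1-w_2)=(F_1-F_2)+(\mathcal L_{g_2}-\mathcal L_{g_1})w_2$. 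Using Lemma \ref{lem lip property of mathcal P op} to control $\|\mathcal P(g_1)-\mathcal P(g_2)\|_{C^{2+\alpha,0}}$, together with the local Lipschitz dependence of the coefficients $P_a(g)$ and of $\mathcal L_g$ itself on $g\in C^{4+\alpha,0}$, I bound the right-hand side in $C^{\alpha,0}$ by $C\|g_1-g_2\|_{C^{4+\alpha,0}}\|h\|_{C^{4+\alpha,0}}$. Lemma \ref{lem solve the ellipt eq of p}(i) then gives
\begin{equation*}
\|\mathcal P'_{g_1}(h)-\mathcal P'_{g_2}(h)\|_{C^{2+\alpha,0}}\le C\|g_1-g_2\|_{C^{4+\alpha,0}}\|h\|_{C^{4+\alpha,0}}.
\end{equation*}

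To finish, I split $\mathcal P'_{g_1}(h)g_1-\mathcal P'_{g_2}(h)g_2=[\mathcal P'_{g_1}(h)-\mathcal P'_{g_2}(h)]g_1+\mathcal P'_{g_2}(h)(g_1-g_2)$ and estimate both summands in $C^{\alpha,0}$ via the bound just obtained and the uniform bound $\|\mathcal P'_{g_2}(h)\|_{C^{2+\alpha,0}}\le C\|h\|_{C^{4+\alpha,0}}$ coming from (\ref{eq linearization mathcal P answer}) and Lemma \ref{lem solve the ellipt eq of p}(i). Assembling the four brackets, choosing $\delta_0$ small enough that $\|g_i-g\|_{C^{4+\alpha,1}}\le\delta_0$ keeps the metrics in a neighborhood where $\mathcal L_{g_i}$ remains invertible and all the Lipschitz constants are uniform, and noting $\|\cdot\|_{C^{4+\alpha,0}}\le\|\cdot\|_{C^{4+\alpha,1}}$, gives the claimed operator-norm bound.
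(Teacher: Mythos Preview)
Your proposal is correct and follows essentially the same route as the paper: decompose $\delta_h\mathcal M(g_1)-\delta_h\mathcal M(g_2)$ into the four brackets from (\ref{eq DM oper def akl}), estimate the local differential pieces by the Lipschitz dependence of their coefficients on $g\in C^{4+\alpha,0}$, handle $(\mathcal P(g_1)-\mathcal P(g_2))h$ via Lemma~\ref{lem lip property of mathcal P op}, and treat $\mathcal P'_{g_1}(h)-\mathcal P'_{g_2}(h)$ by subtracting the defining elliptic equations and applying Lemma~\ref{lem solve the ellipt eq of p}(i). The only cosmetic difference is that the paper cites \cite[p.426]{LQZ14} for the $\mathcal P'$ step, whereas you spell out the equation $\mathcal L_{g_1}(w_1-w_2)=(F_1-F_2)+(\mathcal L_{g_2}-\mathcal L_{g_1})w_2$ explicitly; these are the same argument.
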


\begin{proof} For any $ h \in C^{4+ \alpha,1} (M \times [0,T])$ with $h(\cdot, 0) =0$ 
we have 
\begin{align*}
& \delta_h \mathcal{M}(g_1) - \delta_h \mathcal{M}(g_2) \\
 = & \left ( \Delta_{g_1}\Delta_{L,g_1} 
- \Delta_{g_2}\Delta_{L,g_2} \right ) h- \sum_{a=0}^3 \left (M_a(g_1) *_{g_1}  \nabla_{g_1}^a h
- M_a(g_2) *_{g_2}  \nabla_{g_2}^a h \right ) \\
& -2(n-2) \left (\mathcal{P}^{\prime} _{g_1}(h) g_1 -\mathcal{P}^{\prime}_{g_2}(h) g_2 \right )
- 2(n-2) \left ( \mathcal{P}(g_1) -\mathcal{P}(g_2) \right )h.
\end{align*}
We will estimate the operator norm of each term in the above expression,
and omit most of the tedious but routine calculations below.

First we have
\begin{align} 
& \| \left ( \Delta_{g_1}\Delta_{L,g_1} - \Delta_{g_2}\Delta_{L,g_2} \right ) h \|_{
        C^{\alpha, 0}}  \notag \\
 \leq & \| (\Delta_{g_1} - \Delta_{g_2} )\Delta_{L,g_1} h \|_{C^{\alpha, 0}} +
 \| \Delta_{g_2}(\Delta_{L,g_1} - \Delta_{L,g_2} ) h \|_{C^{\alpha, 0}}   \notag \\
\leq & C \| g_1 - g_2 \|_{C^{4+\alpha,0}} \cdot   \| h \|_{C^{4+\alpha,0}}.
\label{eq temp cal Lichnero Lapla differe}
\end{align}
For each $a =0,1,2,3$ we have
\begin{align}
& \| M_a(g_1) *_{g_1}  \nabla_{g_1}^a h - M_a(g_2) *_{g_2}  \nabla_{g_2}^a h
 \|_{C^{\alpha, 0}} \notag  \\
 \leq & \| (M_a(g_1) *_{g_1} -M_a(g_2)) *_{g_2}) \nabla_{g_1}^a h  \|_{C^{\alpha, 0}}
  + \| M_a(g_2)*_{g_2}( \nabla_{g_1}^ah  -\nabla_{g_2}^ah ) \|_{C^{ \alpha, 0}} \notag \\
\leq &  C \| g_1 - g_2 \|_{C^{4+\alpha,0}} \cdot   \| h \|_{C^{3+\alpha,0}} 
+C \| g_1 - g_2 \|_{C^{3+\alpha,0}} \cdot   \| h \|_{C^{3+\alpha,0}} .
\label{eq temp cal nabla a h terms}
\end{align}
From a proof similar to that on the bottom of \cite[p.426]{LQZ14} we have
\[
 \|  \mathcal{P}^{\prime} _{g_1}(h)  -\mathcal{P}^{\prime}_{g_2}(h)  \|
_{C^{\alpha, 0}}  \leq C \| g_1 - g_2 \|_{C^{2+ \alpha, 0}} \cdot \| 
h \|_{C^{2+\alpha,0}},
\]
hence by using (\ref{eq cal P deriv Lip prop}) with $\tilde{h}_2 =0$ we get
\begin{align}
        &\| \mathcal{P}^{\prime} _{g_1}(h) g_1 -\mathcal{P}^{\prime}_{g_2}(h)
         g_2\|_{C^{ \alpha, 0}}  \notag \\
        \leq &  C \| g_1 - g_2 \|_{C^{2+ \alpha, 0}} \cdot \| h \|_{C^{2+\alpha,0}}
        + C \| g_1 - g_2 \|_{C^{\alpha, 0}} \cdot \| h \|_{C^{2+\alpha,0}}.
        \label{eq nonlocal term cal M lip 1}
\end{align}
Finally using Lemma \ref{lem lip property of mathcal P op} we have 
\begin{align}
\| \left ( \mathcal{P}(g_1) -\mathcal{P}(g_2) \right )h  \|_{C^{2+\alpha, 0}} \leq  
C \| g_1 - g_2 \|_{C^{4+ \alpha, 0}} \cdot \| h \|_{C^{2+\alpha,0}}. 
\label{eq last term to est}  
\end{align}

The lemma now follows from combining together the inequalities 
(\ref{eq temp cal Lichnero Lapla differe}), (\ref{eq temp cal nabla a h terms}), 
(\ref{eq nonlocal term cal M lip 1}), and (\ref{eq last term to est}).
\end{proof}

\vskip .1cm
Next we prove the short-time existence of flow
(\ref{eq decoupled DeTur CBF}).

\begin{proposition} \label{prop short time sol decoupled DeTurck CBF}
Let $M^n$ be a closed manifold. Suppose that $g_0$ is a $C^{8+\alpha}$-Riemannian 
 metric on $M$ with constant scalar curvature $s_0$ and that the elliptic operator
  $(n-1) \Delta_{g_0} + s_0$ is invertible. Then there exists a unique 
  $C^{4+\alpha, 1}$-solution $g(t), \, t \in [0,T]$, of the decoupled DeTurck modified
   CBF (\ref{eq decoupled DeTur CBF}) for some $T>0$.
\end{proposition}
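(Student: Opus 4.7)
The plan is to solve $\mathcal{M}(g)=0$ with $g(0)=g_0$ by a Newton-type implicit function theorem applied around a suitably chosen first-order approximate solution $\bar{g}(t)$. The analytic preparation is already in place: Proposition \ref{lem 4}, expressed in \eqref{eq 6 alpha delta M invertible 6 to 4}, asserts that
\[
\delta_{\bullet}\mathcal{M}(\bar{g}):\{h\in C^{4+\alpha,1}(M\times[0,T]):h(\cdot,0)=0\}\to C^{\alpha,0}(M\times[0,T])
\]
is a Banach-space isomorphism whenever $\bar{g}\in C^{4+\alpha,0}$ and $(n-1)\Delta_{\bar{g}(t)}+s_0$ is invertible on $[0,T]$, while Lemma \ref{lem continuity delta M} provides the Lipschitz dependence $g\mapsto\delta_{\bullet}\mathcal{M}(g)$ that is needed to control nonlinear remainders.

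For the approximate solution I would take the Taylor-polynomial ansatz
\[
\bar{g}(t)\doteqdot g_0+tQ_0,\qquad Q_0\doteqdot \mathcal{F}(g_0)+2(n-2)\mathcal{P}(g_0)\,g_0,
\]
so that $\bar{g}(0)=g_0$ and $\mathcal{M}(\bar{g})(\cdot,0)\equiv 0$ by construction. The operator $\mathcal{F}$ costs four spatial derivatives (Bach tensor, $\Delta S_g$, and $\mathcal{L}_{W_g}g$), while $\mathcal{P}(g_0)$ gains two derivatives over its source term by the Schauder estimate in Lemma \ref{lem solve the ellipt eq of p}(ii); the $C^{8+\alpha}$-hypothesis on $g_0$ therefore yields $Q_0\in C^{4+\alpha}(M)$ and hence $\bar{g}\in C^{4+\alpha,1}(M\times[0,T])$. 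Since $t\mapsto\mathcal{M}(\bar{g})(\cdot,t)\in C^{\alpha}(M)$ vanishes at $t=0$ and depends smoothly on $t$ (being the composition of an affine function of $t$ with nonlinear operators on a fixed function space), one obtains $\|\mathcal{M}(\bar{g})\|_{C^{\alpha,0}(M\times[0,T])}\leq CT$. Finally, the invertibility of $(n-1)\Delta_{g_0}+s_0$ persists to $(n-1)\Delta_{\bar{g}(t)}+s_0$ for small $t$ by a standard perturbation argument, so Proposition \ref{lem 4} indeed applies to $\bar{g}$.

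Write $g=\bar{g}+h$ with $h\in E_1([0,T])=\{h\in C^{4+\alpha,1}(M\times[0,T]):h(\cdot,0)=0\}$. The equation becomes $\delta_{\bullet}\mathcal{M}(\bar{g})h=-\mathcal{M}(\bar{g})-\mathcal{R}(h)$, with $\mathcal{R}(h)\doteqdot\mathcal{M}(\bar{g}+h)-\mathcal{M}(\bar{g})-\delta_{\bullet}\mathcal{M}(\bar{g})h$; define $\Phi(h)\doteqdot -\delta_{\bullet}\mathcal{M}(\bar{g})^{-1}\bigl(\mathcal{M}(\bar{g})+\mathcal{R}(h)\bigr)$ on a ball $\bar{B}_\rho\subset E_1([0,T])$. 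Lemma \ref{lem continuity delta M} combined with the mean value theorem yields $\|\mathcal{R}(h_1)-\mathcal{R}(h_2)\|_{C^{\alpha,0}}\leq C\rho\,\|h_1-h_2\|_{C^{4+\alpha,1}}$ on $\bar{B}_\rho$, so choosing $\rho$ and then $T$ sufficiently small turns $\Phi$ into a contraction of $\bar{B}_\rho$ into itself; its fixed point is the desired solution, with local uniqueness automatic from the contraction, and global uniqueness following because any other $C^{4+\alpha,1}$-solution starting at $g_0$ has $\|g-\bar{g}\|_{C^{4+\alpha,1}}=O(T)$ from the flow equation. The main obstacle I anticipate is the derivative bookkeeping producing $\bar{g}\in C^{4+\alpha,1}$ with $\|\mathcal{M}(\bar{g})\|_{C^{\alpha,0}}=O(T)$; this is precisely the gap that forces the stronger $C^{8+\alpha}$-hypothesis here, in contrast with the $C^{4+\alpha}$-hypothesis of Theorem \ref{thm main short time exist cbf}, which one expects to recover by a subsequent approximation argument.
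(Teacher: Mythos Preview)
Your existence argument is essentially identical to the paper's: the same approximate solution $\bar{g}(t)=g_0+tQ_0$, the same smallness estimate $\|\mathcal{M}(\bar{g})\|_{C^{\alpha,0}}\leq CT$ from the Lipschitz properties of $\mathcal{F}$ and $\mathcal{P}$, and the same use of Proposition~\ref{lem 4} and Lemma~\ref{lem continuity delta M} to run a Newton-type iteration. The paper packages your contraction as an application of an abstract inverse function theorem (\cite[Lemma~3.7]{LQZ14}), but the content is the same.

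Where you diverge from the paper is uniqueness, and here your argument has a gap. You assert that any other $C^{4+\alpha,1}$-solution $g$ with $g(0)=g_0$ satisfies $\|g-\bar{g}\|_{C^{4+\alpha,1}}=O(T)$ and therefore lands in the contraction ball. But the flow equation only gives you control of $\partial_t(g-\bar{g})$ in $C^{\alpha,0}$, not of the spatial $C^{4+\alpha}$-norm of $g-\bar{g}$; vanishing at $t=0$ does not force the $C^{4+\alpha,1}$-norm on $[0,T]$ to be $O(T)$ (consider $h(x,t)=t\,u(x)$: the $\partial_t$-part of the norm does not shrink with $T$). The paper avoids this issue entirely: it defers the uniqueness in Proposition~\ref{prop short time sol decoupled DeTurck CBF} to \S\ref{subsec uniquness cbf feb15}, where it is proved by an independent $L^2$-energy argument for the difference $g_2-g_1$, using integration by parts against $-\Delta_{g_1}^2$ and Gronwall. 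You should either supply a genuine a priori $C^{4+\alpha,1}$-smallness estimate for $g-\bar{g}$ (which is not immediate) or, as the paper does, separate uniqueness from the fixed-point construction.
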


\begin{proof}   
Let 
\begin{equation} \label{eq M opera in DeTurck decoup cbf q}
\mathcal{M}: \{ g \in C^{4+\alpha,1} ( M \times [0,T]), \,  g(0) =g_0  \}
\rightarrow  C^{ \alpha, 0} ( M \times [0,T])
\end{equation}
be a map defined in (\ref{eq decoupled DeTur CBF}). 
We define metrics $\bar{g} \in C^{4+\alpha,1} ( M \times [0,T])$ by
\begin{equation}\label{eq metric bar g as x_0}
\bar{g}(t) = g_0 + t  (\mathcal{F}(g_0) + 2(n-2)\mathcal{P}(g_0)g_0) .
\end{equation}
We will apply inverse function theorem \cite[Lemma 3.7]{LQZ14} to map $\mathcal{M}$ 
around $\bar{g}$  to prove the existence of a solution of equation $\mathcal{M}(g)=0$. 
Note that linear operator $(n-1)\Delta_{\bar{g}(t)} + s_0$ is a small
perturbation of  $(n-1)\Delta_{g_0} + s_0$ when $t$ is small.
 Since operator $(n-1)\Delta_{g_0} + s_0$ is invertible,
  we conclude that $(n-1)\Delta_{\bar{g}(t)} + s_0$ is invertible for each $t \in [0,T]$ 
  when $T$ is small enough. In general metric $\bar{g}(t)$ does not have constant
   scalar curvature for $t>0$. 

It follows from a simple calculation that  
\begin{equation}\label{eq approx solu error H(x_0)}
\mathcal{M} (\bar{g}(t))  =  \mathcal{F}(g_0) + 2(n-2)\mathcal{P}(g_0)g_0 - \left (\mathcal{F}
(\bar{g}(t))  + 2(n-2)\mathcal{P}(\bar{g}(t) \right )\bar{g}(t)).
\end{equation}
Using Lemma \ref{lem lip property of mathcal P op} we get
\begin{equation} \label{eq erro of approxmate solu cal M}
\|\mathcal {M}(\bar{g})\|_{C^{ \alpha, 0}} \leq  CT
\end{equation}
where constant $C$ depends on $\| g_0 \|_{C^{8+\alpha}}$. Hence $\bar{g}(t)$ is an 
approximate solution of  equation $\mathcal{M}(g)=0$ when $T$ is small.

By (\ref{eq 6 alpha delta M invertible 6 to 4}) we have that for sufficient small $T$
\[
\|(\delta_{\bullet} \mathcal {M} (\bar{g}))^{-1}\|_{L(C^{\alpha,0}, C^{4+\alpha, 1})} 
\leq \bar{C}
\]
for some constant $\bar{C}$.
 Let 
 \[
 \bar{B}(\bar{g}, \delta_0) = \left \{g\in C^{ 4+\alpha, 1}(M \times [0,T]): 
 \, g(0) =g_0 \text{ and }  \|g -\bar{g}\|_{C^{4+\alpha, 1}} \leq \delta_0 \right \}.
 \] 
  By the perturbation theory
  of bounded linear operators and Lemma \ref{lem continuity delta M} there is a constant $C_0$ 
  and a small number $\delta_0>0$  such that the operator norm
\[
\|(\delta_{\bullet} \mathcal {M} (g))^{-1}\|_{L(C^{\alpha,0}, C^{4+\alpha, 1})} \leq C_0
\]
for all $g \in \bar{B}(\bar{g}, \delta_0)$. By Lemma \ref{lem continuity delta M} we can choose 
$\delta_0$ even smaller if necessarily such that  for the constant $C_0$ above we have
\[
\| \delta_{\bullet} \mathcal {M} (g_1) -\delta_{\bullet} \mathcal {M}(g_2)\|_{L( C^{4+\alpha, 1},
 C^{\alpha,0})}   \leq \frac{1}{2C_0}
\]
for all $g_1, g_2 \in \bar{B}(\bar{g}, \delta_0)$. From (\ref{eq erro of approxmate solu cal M})
 we may choose an even smaller $T$ if necessary to get
\[
\|\mathcal {M}(\bar g)\|_{C^{\alpha, 0}} \leq \frac {\delta_0} {2C_0}.
\]
Now the existence in Proposition \ref{prop short time sol decoupled DeTurck CBF} follows from
 the inverse function theorem.
 
 \vskip .1cm
We leave the proof of  the uniqueness to subsection \S \ref{subsec uniquness cbf feb15}. 
\end{proof}

\vskip .2cm
Finally we can give a quick

\noindent {\it Proof of the existence part of Theorem \ref{thm main short time exist cbf}}.
First we assume that the initial metric  $g_0$ is a $C^{8+\alpha}$, by
Proposition \ref{prop short time sol decoupled DeTurck CBF} we have a solution 
$g\in C^{ 4+\alpha, 1}(M \times [0,T])$ of (\ref{eq decoupled DeTur CBF})
and consequently a function  $p \in C^{2+\alpha,0}$. 
Near the end of \S \ref{subsect DeTurck cbf} we conclude the equivalence between
 (\ref{eq decoupled DeTur CBF}) and (\ref{eq mod cbf DeTurck flow}), 
 hence we get a solution $(g(t),p(t))$ of (\ref{eq mod cbf DeTurck flow}). 
  The better regularity  $ g \in C^{4+\alpha, 1+ \frac{1}{4}\alpha}$ 
   follows from the standard parabolic theory. 
  By Lemma \ref{lem the equi before after DeTurck}(i) and \ref{lem CBF equiv forms}
  we get the required solution of  CBF  (\ref{eq cbf eq p ellipt}). 
  
  \vskip .1cm
  When the initial metric  $g_0$ is in $C^{4+\alpha}$, from the solution of Yamabe probelm
  we may choose a family of  $C^{8+\alpha}$-metrics  $g_{0i}$ of constant scalar curvature 
  $s_0 \neq 0$ which converges to  $g_0$ in $C^{4+\alpha}$-norm.  
Hence we have a family of solutions $g_i(t)$ in $C^{ 4+\alpha, 1+ \frac{1}{4}\alpha}(M \times [0,T])$
 of flow  (\ref{eq decoupled DeTur CBF}).
 
By Lemma \ref{lem continuity delta M}  the operator $\mathcal{M}(g)$ is continuously differentiable 
in $g \in C^{4+\alpha,1} ( M \times [0,T])$. Hence by the continue-dependence on parameters in 
the inverse function theorem and the proof of the existence of $g_i(t)$ in Proposition
\ref{prop short time sol decoupled DeTurck CBF}
we conclude that the sequence of fixed points  $\{ g_i(t) \}$ converges in  
$C^{4+\alpha,1}(M \times [0,T])$-norm to
 a solution $g_{\infty}(t) \in C^{ 4+\alpha, 1}(M \times [0,T])$
 of (\ref{eq decoupled DeTur CBF})  with initial metric $g_{\infty}(0) =g_0$.
 This  $g_{\infty}(t)$ gives rise to the required solution of CBF (\ref{eq cbf eq p ellipt}) 
 with initial metric $g_0$. 
  \hfill $\square$

\subsection{Proof of the uniqueness in Theorem \ref{thm main short time exist cbf}}
 \label{subsec uniquness cbf feb15}
 
 First we prove the uniqueness in Proposition \ref{prop short time sol decoupled DeTurck CBF}
 by energy method. Since such proof is well-understood in parabolic equations and the calculation 
 is tedious here, we will be sketch on details.

Suppose $g_1(t)$ and $g_2(t)$ are two solutions with initial data $g_1(0) =g_2(0) =g_0$. 
Let $g_{21}(t) \doteqdot g_2(t)-g_1(t)$. Using $\mathcal{F}(g_2) -\mathcal{F}(g_1) =\int_0^1 \frac{d}{ds}
\mathcal{F}(g_1+s(g_2 -g_1)) ds$  and (\ref{eq linearization B1 answer}) we have
\begin{align*}
\partial_t g_{21}
=&  - \Delta^2_{g_1}  g_{21}+  \sum_{a=0}^3 \check{A}_a *\nabla^a g_{21}
 -2(n-2) \mathcal{P}(g_1) g_{21}  -2(n-2)  ( \mathcal{P}(g_2) - \mathcal{P}(g_1)) g_2 .
\end{align*}

Let $U(t) \doteqdot \int_M \sum_{b=0}^2|\nabla_{g_1(t)}^bg_{21}(t)|^2_{g_1(t)} d \mu_{g_1(t)}$ 
be the energy and let $[A, B] \doteqdot AB -BA$ be the commutator of two operators.
We compute schematically the derivative of $U(t)$ using integration by parts,
\begin{align*}
 \frac{dU(t)}{dt}  \leq & 2 \int_M \sum_{b=0}^2  \langle \left ( \nabla_{g_1(t)}^b \partial_t+
  [\partial_t, \nabla_{g_1(t)}^b] \right) g_{21},  \nabla_{g_1(t)}^bg_{21}(t) \rangle _{g_1(t)} 
 d \mu_{g_1(t)}  + C U(t)  \\
 \leq &   - 2 \int_M \sum_{b=0}^2  \langle  \nabla_{g_1(t)}^b\Delta^2_{g_1} 
 g_{21}, \nabla_{g_1(t)}^b g_{21}(t) \rangle_{g_1(t)} d \mu_{g_1(t)}   \\
& + 2 \int_M \sum_{a=0}^5 \sum_{b=0}^2  \langle \tilde{A}_{a,b} * \nabla
_{g_1(t)}^a g_{21}, 
\nabla_{g_1(t)}^bg_{21}(t)  \rangle _{g_1(t)} d \mu_{g_1(t)}   \\
& - 4(n-2)  \int_M \sum_{b=0}^2 \langle \nabla_{g_1(t)}^b ( ( \mathcal{P}(g_2)
-   \mathcal{P}(g_1)) g_2 ), \nabla_{g_1(t)}^b g_{21} \rangle_{g_1(t)}   d \mu_{g_1(t)} 
 + C U(t) \\ 
\leq &    - \int_M \sum_{b=0}^2 | \Delta_{g_1}  \nabla_{g_1(t)}^b g_{21}|^2_{g_1(t)} 
d \mu_{g_1(t)}+  C U(t).
\end{align*}
Here we omit all the details to obtain the last inequality.
Since  $U(0) =0$  by $g_{21} (0) =0$,
we have $U(t) =0$ from Gronwall inequality and the uniqueness is proved.

 \vskip .2cm
 As a consequence we give a 
 
\noindent {\it Proof of the uniqueness in Theorem \ref{thm main short time exist cbf}}.
The proof is standard as the proof of the uniqueness of Ricci flow on closed manifolds
 (see, for example, \cite[p.117-118]{CLN6}). The basic idea is that given two solutions 
 $(g_i(t),p_i(t)), \, i=1, 2$, of CBF (\ref{eq cbf eq p ellipt}),  from Lemma
  \ref{lem the equi before after DeTurck}(ii) we have two diffeomorphisms  $\varphi_i(t)$ 
  which are solutions of the parabolic equations (\ref{detruck harmonic map cbf}) 
   corresponding to $g_i(t)$. 
   Then the pushing-forward metrics $(\varphi_i(t))_* g_i(t)$ are solutions of DeTurck CBF
    (\ref{eq decoupled DeTur CBF}) satisfying the same initial condition, hence by the
     uniqueness in Proposition \ref{prop short time sol decoupled DeTurck CBF}
      we have $( \varphi_1(t))_* g_1(t) =( \varphi_2(t))_* g_2(t) =g_*(t)$. 
      Note that $\varphi_i(t)$ are the solutions of ODE (\ref{eq one parame diffeom}) 
      for metric $g_*(t)$ with initial condition $\varphi_1(0)=\varphi_2(0) =\operatorname{Id}_M$. 
      Hence $\varphi_1(t)=\varphi_2(t)$ and  $g_1(t) = \varphi_1(t)^* 
      g_*(t) =\varphi_2(t)^* g_*(t) =g_2(t)$.
\hfill $\square$

\begin{remark} One may give a direct proof of the uniqueness in 
Theorem \ref{thm main short time exist cbf} using Kotschwar's energy 
techniques without relying on DeTurck CBF  (\cite{Ko14}).
More precisely, for two solution $g_1(t)$ and $g_2(t)$ of CBF  (\ref{eq cbf eq p ellipt})  
with common initial  data $g_0$ one may use energy defined by
\begin{align*}
 \mathcal{E}(t) \doteqdot &   \int_M |g_1(t)-g_2(t)|^2_{g_1(t)} d \mu_{g_1(t)} 
+ \int_M |\nabla_{g_1(t)}-  \nabla_{g_2(t)}|^2_{g_1(t)} d \mu_{g_1(t)} \\
& + \sum_{a=0}^1 \int_M |\nabla^a_{g_1(t)} \operatorname{Rm}_{g_1(t)}- 
 \nabla^a_{g_2(t)} \operatorname{Rm}_{g_2(t)}|^2_{g_1(t)} d \mu_{g_1(t)}.
\end{align*}
\end{remark}

\subsection{Regularity}
We prove the following theorem.
\begin{theorem} \label{thm main cbf better regu}
Suppose that $g_0$ is smooth. Then the solution $(g(t), p(t))$  of CBF  (\ref{eq cbf eq p ellipt})
is smooth in space and time  variables for a short time.
\end{theorem}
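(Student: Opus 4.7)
The plan is to bootstrap the spatial regularity of the solution of the DeTurck modified CBF \eqref{eq mod cbf DeTurck flow} inductively via Schauder estimates, and then transfer smoothness to CBF \eqref{eq cbf eq p ellipt} through the diffeomorphism $\varphi_t$ from Lemma \ref{lem the equi before after DeTurck}. Since $\varphi_t$ solves the ODE \eqref{eq one parame diffeom} whose right-hand side is $-W_{g(t)}$, a vector field built from $g(t)$ and its first three spatial derivatives, $\varphi_t$ inherits spatial and temporal smoothness directly from $g(t)$. Hence it suffices to show that $(g(t), p(t))$ solving the decoupled DeTurck flow \eqref{eq decoupled DeTur CBF} together with the elliptic equation for $p$ is smooth.

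The bootstrap proceeds by induction on $k \geq 4$ with hypothesis $g \in C^{k+\alpha, \cdot}$ in space (with the corresponding parabolic time regularity) and $p \in C^{k-2+\alpha, \cdot}$. Theorem \ref{thm main short time exist cbf} gives the base case $k = 4$. For the inductive step, I would view \eqref{eq decoupled DeTur CBF} as a linear fourth-order parabolic equation for $g$ whose principal part $-\Delta_g \Delta_{L,g}$ has coefficients depending on $g$ and its derivatives up to order two, and whose remaining terms -- the schematic lower-order contributions $M_a(g) * \nabla_g^a g$ with $a \leq 3$ appearing in \eqref{eq linearization B1 answer}, together with the nonlocal $2(n-2)\mathcal{P}(g)g$ -- are of lower order in $g$. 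Under the inductive hypothesis the coefficients of the principal part are in $C^{k-2+\alpha}$ and the inhomogeneity is in $C^{k-3+\alpha}$ spatially. Applying Lemma \ref{lem sola standard parabolic linear system high order}(ii) to an appropriate spatial derivative (or spatial difference quotient) of $g$ then upgrades the spatial regularity of $g$ by one, giving $g \in C^{k+1+\alpha}$. Next, by Lemma \ref{eq div B is fourth order} the right-hand side of the elliptic equation for $p$ depends on $g$ through at most four spatial derivatives, hence is now in $C^{k-3+\alpha}$; Lemma \ref{lem solve the ellipt eq of p}(ii) then yields $p \in C^{k-1+\alpha}$. This closes the induction and produces arbitrarily high spatial regularity of $(g, p)$ on a time interval uniformly bounded below.

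Temporal regularity at every order follows from the equation itself: $\partial_t g$ is directly expressed through $\mathcal{F}(g) + 2(n-2)\mathcal{P}(g)g$ in terms of spatial derivatives, and differentiating \eqref{eq decoupled DeTur CBF} in $t$ iteratively expresses all higher time derivatives of $g$ and $p$ in terms of mixed space-time derivatives of lower order. A standard difference-quotient argument in $t$ combined with Lemma \ref{lem sola standard parabolic linear system high order}(ii) upgrades time Hölder regularity to match the spatial regularity at each order.

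The most delicate point, and what I expect to be the real obstacle in the bookkeeping, is that the nonlocal term $\mathcal{P}(g) g$ imports regularity through the elliptic solver for $p$, so one must check that this coupling does not break the induction. Here Lemma \ref{eq div B is fourth order} is decisive: because $\nabla^2 \cdot B(g)$ is fourth-order (rather than sixth-order) in $g$, the second-order elliptic equation for $p$ costs exactly two derivatives, so $p$ remains two derivatives less regular than $g$ at each step of the iteration, which is precisely the balance needed for the coupled bootstrap to close. Once smoothness of $(g, p)$ solving \eqref{eq mod cbf DeTurck flow} is established, Lemma \ref{lem the equi before after DeTurck}(i) together with Lemma \ref{lem CBF equiv forms} yields smoothness of the solution of CBF \eqref{eq cbf eq p ellipt} after composing with the smooth diffeomorphism $\varphi_t$.
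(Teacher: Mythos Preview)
Your proposal is correct and follows essentially the same route as the paper: a spatial bootstrap on the DeTurck modified CBF via the parabolic Schauder estimate of Lemma \ref{lem sola standard parabolic linear system high order}(ii), with the elliptic equation for $p$ (using Lemma \ref{eq div B is fourth order} to keep the right-hand side fourth order) supplying the matching regularity of $p$ at each step, followed by temporal regularity from the equation itself. The paper's proof is terser---it rewrites \eqref{eq mod cbf DeTurck flow} in local coordinates with principal part $(g^{kl}\partial_k\partial_l)(g^{pq}\partial_p\partial_q)$ and lumps the remaining terms into a single $\hat{A}_{ij}(g_0,g)$ depending on at most third derivatives of $g$---and leaves the transfer back to CBF \eqref{eq cbf eq p ellipt} via $\varphi_t$ implicit, whereas you make that step explicit.
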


\begin{proof} 
Let metric $\tilde{g}$ in (\ref{eq vector field w def}) be $g_0$. Using local coordinates
 $(x^i)$ we may rewrite the DeTurck modified CBF (\ref{eq mod cbf DeTurck flow})
  in a schematic way  as
\begin{equation} \label{eq cbf in local for regula}
    \begin{cases}
       \partial_t g_{ij} + (g^{kl}\partial_k \partial_l)  (g^{pq}\partial_p \partial_q) g_{ij}
      +  \hat{A}_{ij}(g_0,g)  -2(n-2) pg_{ij} = 0,   \\
        ((n-1)\Delta_{g(t)} + s_0) p = -(n-2)A(g(t)) \cdot B(g(t))+  \nabla_{g(t)}^2 B(g(t)),
    \end{cases}
\end{equation}
where $ \hat{A}_{ij}(g_0,g)$ depends on $\{g_{pq}\}$ up to their third derivatives. 
We want to prove $\partial^a g_{ij}$ is in $C^{4+ \alpha, 1}(M \times [0,T])$
 for each $a \in \mathbb{N}$ by a bootstrap argument. 

Below we only consider the base case $a= 1$, as an example. By the existence 
part of Theorem \ref{thm main short time exist cbf} solution $g \in C^{4+\alpha,1}$
 and $p \in C^{2+\alpha,0}$, hence $ \hat{A}_{ij}(g_0,g)  -2(n-2) pg_{ij} \in C^{1+\alpha,0}$.
  It follows from Lemma \ref{lem sola standard parabolic linear system high order}(ii) and 
  the smoothness of $g_0$ that $g \in C^{5+\alpha,1}$. 

After we have improved the spatial regularity to smoothness, we can use the equation
 (\ref{eq cbf in local for regula}) to improve the regularity in time to smoothness. 
 The theorem is proved.
 \end{proof}

\section{The backward uniqueness of CBF} \label{sec backward unique CBF}

In this section we give a proof of Theorem \ref{thm backward uniqu CBF} using 
Agmon-Nirenberg's energy method.
 This approach is used by Kotschwar in his second 
proof of the backward uniqueness of  Ricci flow (\cite{Ko16}), and later by Sun and Zhu in their proof
of the backward uniqueness of CRF (\cite{SZ18}). Because  CBF is 
a fourth order system with a pressure term, we need to combine the idea in \cite[\S 4]{Ko16} about 
high order systems with the idea in \cite{SZ18} about handling the pressure term. 

\subsection{Estimates of a few basic geometric quantities} 
\label{subsec est geom quant tensor}

Let $(g(t), p(t))$ and $(\tilde{g}(t), \tilde{p}(t))$ be the two solutions in Theorem
 \ref{thm backward uniqu CBF}. 
 Note that since operator $(n-1)\Delta_{g(T)} + s_0 $ is assumed
to be invertible, equality $g(T)= \tilde{g}(T)$ implies  $p(T) = \tilde{p}(T)$.  
 Below we will use  $g(t)$ as the background metric, in particular,
  connection $\nabla = \nabla_{g(t)}$, $\Delta = \Delta_{g(t)}$, and measure $d \mu = d \mu_{g(t)}$. 
Let $\tau \doteqdot T - t$ be the backward time. We define time-dependent tensors
\begin{align*}
& Y^{(0)} \doteqdot g(t) - \tilde{g}(t), \quad Y^{(1)} \doteqdot \nabla - \nabla_{\tilde{g}(t)}, 
\quad Y^{(k)} \doteqdot \nabla^{k-1} Y^{(1)} \text{ for } k \geq 2, \\
&  X^{(k)} \doteqdot \nabla^k \operatorname{Rm} - \nabla^k_{\tilde{g}(t)} 
\operatorname{Rm}_{\tilde{g}(t)} \text{ for } k \geq 0, \\
& q^{(k)} \doteqdot \nabla^k p(t)- \nabla_{\tilde{g}(t)}^k \tilde{p}(t) \text{ for } k=0,1,2,
 \quad \text{ and } q^{(k)} \doteqdot \nabla^{k-2} q^{(2)} \text{ for } k \geq 3.
\end{align*} 

The following two tensors and the estimates of their associated $L^2$-energies
 will be used to prove the backward uniqueness.
\[
X \doteqdot  X^{(0)} \oplus X^{(1)} \oplus \cdots \oplus X^{(4)} , \quad Y \doteqdot Y^{(0)}
 \oplus Y^{(1)} \oplus \cdots \oplus Y^{(4)} .
\]
\begin{remark} In the above $Y^{(k)}$ up to order $k=4$ is needed because of
 the following calculation (compare \cite[\S 3.1]{Ko16})
\[
\left ( \Delta_{\tilde{g}(t)}^2 - \Delta^2  \right ) \nabla^{a}  \operatorname{Rm}_{\tilde{g}(t)} 
 = \sum_{b=0}^4 Y^{(b)}*T_b,
\]
which is a term shown up in calculating $(\partial_{\tau} + \Delta^2) X^{(a)}$ below. Here $T_b$
 depends on $\operatorname{Rm}_{\tilde{g}(t)}$ up to its $(a+4)$th-derivatives. The order of $Y$ 
  then determines the order of $X^{(k)}$ as given above.
\end{remark}

We have the following pointwise bounds of derivatives of each components in $X$ and $Y$.

\begin{lemma} \label{lem evol eq for backward uniq}
For $a = 0,1,2,3,4$, there are constants $C_1, C_2$, and $C_3$ depending on $g(t)$ 
and $\tilde{g}(t)$ such that
\begin{align*} 
& | \partial_{\tau} Y^{(a)} |  \leq   C_1 \sum_{b=0}^{a+2} |X^{(b)}| +C_2 \sum_{b=0}^{a}
 |Y^{(b)}|  +  C_3 \sum_{b=0}^a | q^{(b)}|,  \\
& \left | ( \partial_{\tau}  + \Delta^2) X^{(a)} \right | \leq C_1 \sum_{b=0}^{a+2} |X^{(b)}| 
+C_2 \sum_{b=0}^{4} |Y^{(b)}|  +  C_3 \sum_{b=0}^{a+2} | q^{(b)}|.
\end{align*}
\end{lemma}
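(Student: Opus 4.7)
The plan is to derive each bound by writing down the evolution equation of the relevant tensor under CBF, subtracting the analogous equation for $(\tilde g, \tilde p)$, and classifying every resulting term by the highest-order difference quantity it contributes. All multiplicative coefficients will be uniformly bounded because both solutions are smooth on the compact set $M \times [0,T]$, so the geometries of $g$ and $\tilde g$ and their derivatives to every order are controlled.

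For the $a = 0$ case of the first inequality, start from $\partial_\tau g = -2(n-2)(B(g) + pg)$ and subtract the same identity for $\tilde g$ to obtain
\[
\partial_\tau Y^{(0)} = -2(n-2)\bigl[(B(g) - B(\tilde g)) + q^{(0)}\, g + \tilde p\, Y^{(0)}\bigr].
\]
Integrating along the affine path $g_s = \tilde g + sY^{(0)}$ and using the schematic expression $B \sim \nabla^2 R + \operatorname{Rm} * \operatorname{Rm}$ from \eqref{eq Bach definition} bounds this by $\sum_{b\leq 2}|X^{(b)}| + |Y^{(0)}| + |q^{(0)}|$. For $a \geq 1$, use $Y^{(a)} = \nabla^{a-1} Y^{(1)}$ and commute $[\nabla,\partial_\tau]$, which contributes $\nabla\partial_\tau g \sim \nabla B$; each application of $\nabla$ advances the $X$-index by one (up to $a+2$) and the $Y$- and $q$-indices by one (up to $a$), giving the stated bound by induction.

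For the second inequality at $a = 0$, specialize the standard formula $\partial_t R_{ijk}{}^l = \tfrac12 g^{lm}\bigl(\nabla_i\nabla_k h_{jm} - \nabla_i\nabla_m h_{jk} - \nabla_j\nabla_k h_{im} + \nabla_j\nabla_m h_{ik}\bigr) + \operatorname{Rm} * h$ to $h = 2(n-2)(B + pg)$. The crucial simplification is $S \equiv s_0$, so $\nabla S = \Delta S = 0$ kills the $\nabla_i\nabla_j S$ and $(\Delta S)g$ pieces of \eqref{eq Bach definition}; applying $\nabla^2$ to the surviving $2\Delta R$-piece together with the contracted second Bianchi identity reorganizes the resulting six-derivative-in-$g$ term into $\Delta^2 \operatorname{Rm}$ modulo $\nabla^b\operatorname{Rm}*\operatorname{Rm}$ with $b\leq 3$ and $(\nabla^2 p)*g$. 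Thus
\[
\partial_\tau \operatorname{Rm} + \Delta^2 \operatorname{Rm} = \sum_{b\leq 3}\nabla^b\operatorname{Rm}*\operatorname{Rm} + (\nabla^2 p)*g.
\]
Subtracting the corresponding identity for $(\tilde g, \tilde p)$ and writing $\Delta^2 \operatorname{Rm}_{\tilde g} = \Delta_{\tilde g}^2 \operatorname{Rm}_{\tilde g} + (\Delta^2 - \Delta_{\tilde g}^2)\operatorname{Rm}_{\tilde g}$, the correction expands as $\sum_{b\leq 4} Y^{(b)} * T_b$ per the Remark preceding the lemma, and the remaining differences contribute $\sum_{b\leq 2}|X^{(b)}|$ and $\sum_{b\leq 2}|q^{(b)}|$. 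For $a\geq 1$, apply $\nabla^a$ and commute with $\Delta^2$; the commutator produces $\operatorname{Rm}*\nabla^b\operatorname{Rm}$-terms with $b\leq a+2$, advancing the $X$-index to $a+2$, while the $Y$-index saturates at $4$ because $Y$ enters only through $(\Delta^2 - \Delta_{\tilde g}^2)\operatorname{Rm}_{\tilde g}$ and through switches between $\nabla$ and $\nabla_{\tilde g}$, both capped at four derivatives of $Y^{(0)}$.

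The main obstacle is the coordinated bookkeeping rather than a conceptually subtle step: one must verify that the principal six-derivative term of $\partial_t \operatorname{Rm}$ really matches $-\Delta^2 \operatorname{Rm}$ up to the claimed orders (leaning on $S \equiv s_0$ and the Bianchi identities), and that iterated switches between $\nabla$ and $\nabla_{\tilde g}$ under repeated differentiation never force the $Y$-index beyond order four in the second inequality. Once the $a=0$ cases are in hand, the induction on $a$ is mechanical.
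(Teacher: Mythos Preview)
Your overall strategy---write the evolution equation, subtract, classify terms---matches the paper's (the paper in fact omits the full computation and only flags two key points). However, the step you identify as ``coordinated bookkeeping'' contains a genuine gap, and it is exactly the point the paper singles out as nontrivial.

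Your claim that ``the $Y$-index saturates at $4$ because $Y$ enters only through $(\Delta^2 - \Delta_{\tilde g}^2)\operatorname{Rm}_{\tilde g}$ and through switches between $\nabla$ and $\nabla_{\tilde g}$, both capped at four derivatives of $Y^{(0)}$'' is not justified as stated. In the evolution \eqref{eq evol nabla rm cbf} the term $\nabla^a\bigl(T(\nabla^2 p)\bigr)$ carries $a+2$ covariant derivatives of $p$, so after subtracting you must control
\[
\nabla^{a+2} p - \nabla_{\tilde g}^{\,a+2}\tilde p .
\]
If you split this naively as $\nabla^{a+2}(p-\tilde p) + (\nabla^{a+2}-\nabla_{\tilde g}^{\,a+2})\tilde p$, the second piece forces $Y^{(b)}$ with $b$ up to $a+2$, which for $a=4$ is $Y^{(6)}$---too many. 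Likewise, if you literally ``apply $\nabla^a$'' to the $a=0$ difference equation, the term $\nabla^a\bigl[(\Delta^2 - \Delta_{\tilde g}^2)\operatorname{Rm}_{\tilde g}\bigr]$ already produces $Y^{(b)}$ with $b\le a+4$. So neither route, as written, keeps $Y$ at order $\le 4$.

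The resolution the paper uses (its point (II)) is precisely the reason for the unusual definition $q^{(k)} \doteqdot \nabla^{k-2}q^{(2)}$ for $k\ge 3$: one takes the difference of \eqref{eq evol nabla rm cbf} for $g$ and $\tilde g$ directly (so that $(\Delta^2-\Delta_{\tilde g}^2)$ acts on $\nabla_{\tilde g}^{\,a}\operatorname{Rm}_{\tilde g}$, costing only $Y^{(b)}$ with $b\le 4$), and for the pressure piece one writes
\[
\nabla^{a+2} p - \nabla_{\tilde g}^{\,a+2}\tilde p
= (\nabla^a - \nabla_{\tilde g}^{\,a})\,\nabla_{\tilde g}^{\,2}\tilde p
+ \nabla^a\bigl(\nabla^2 p - \nabla_{\tilde g}^{\,2}\tilde p\bigr)
= \sum_{b=0}^{a} Y^{(b)}*\tilde T_b + q^{(a+2)} .
\]
This is the mechanism that caps the $Y$-index at $a\le 4$ while pushing the excess derivatives into $q^{(a+2)}$; without it your assertion does not follow. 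You should make this splitting explicit rather than asserting the saturation.
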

 
About the proof of this lemma, here we make a general comment which also applies to
the proof of Lemma \ref {lem control L2 of q(a)} below. 
In geometric analysis it is a standard practice to calculate the evolution equations of geometric 
quantities associated to the geometric flow, usually such calculations are lengthy but straightforward
and their precise schematic forms are very important for the further arguments.
In the particular case of Lemma \ref{lem evol eq for backward uniq},
 as a matter of fact, the evolution of 
 curvatures and their derivatives will be calculated in \S \ref{sect Shi conformal Bach f} for 
 other purpose, while the evolution of the differences of two metrics, the associated two
  connections, two curvatures, and two derivatives of curvatures can be calculated in a
  standard fashion. 
  
Here we skip the calculations used to prove Lemma \ref{lem evol eq for backward uniq}, instead
we  mention two key points involved in the calculations.

\noindent (I) For any tensor $W$ we hope to express  $\nabla_{\tilde{g}(t)}^{a} W$
 in terms of $\nabla^a W$ and $Y^{(b)}$ for $a \leq 4$ and $b \leq 4$, 
 in particular, for $a=2$ we have
\begin{align*}
\nabla_{\tilde{g}(t)}^2 W = \nabla^2 W + Y^{(2)} *W + Y^{(1)} * \nabla W + Y^{(1)} *W *T_1, 
\end{align*}
where $T_1$ is some tensor depending on $g(t)$ and $\tilde{g}(t)$.

\noindent  (II) A carefully calculation is needed to see that  
the upper bound of $\left | ( \partial_{\tau}  
+ \Delta^2) X^{(4)} \right | $ does not need $|Y^{(5)}|$. 
More precisely, in taking the difference of equation (\ref{eq evol nabla rm cbf}) below
 for $g(t)$ and $\tilde{g}(t)$ with $a=4$, 
 we need to bound the difference 
\[
\left | \nabla^a (T(\nabla^{2}p))  - \nabla_{\tilde{g}(t)}^a (T(\nabla_{\tilde{g}(t)}^{2}
\tilde{p})) \right |
\]
without using $|Y^{(5)}|$. For $a \leq 4$ we actually have
\begin{align*}
\nabla^{a+2} p  - \nabla_{\tilde{g}(t)}^{a+2} \tilde{p}  =& \left (\nabla^a- \nabla_{\tilde{g}(t)}^a
\right ) \nabla_{\tilde{g}(t)}^2 p +   \nabla^a \left ( \nabla^2 p - \nabla_{\tilde{g}(t)}^2 \tilde{p} 
\right ) =
  \sum_{b=0}^{a} Y^{(b)} * \tilde{T}_b  +  \sum_{b=0}^{a+2}  q^{(b)},
\end{align*}
where  $\tilde{T}_b$ depends on $g(t)$, $\tilde{g}(t)$, $p(t)$, and $\tilde{p}(t)$.
This calculation is different from the calculation in the proof of \cite[Lemma 2]{SZ18}. 
 It is this calculation which forces us to use  $q^{(b)}$ as defined above rather than 
 $\nabla^b q^{(0)}$ used in \cite{SZ18}.

\vskip .1cm
Lemma \ref{lem evol eq for backward uniq} can be summarized to  

\begin{lemma}  \label{lem evol eq for summaried backward uniq}
We have 
\begin{align}
& | (\partial_{\tau} X + \Delta^2) X | +| \partial_{\tau} Y |  \leq C\left (|Y| + |X| +|X^{(5)}|
 +|X^{(6)}|  \right )  +C \sum_{b=0}^{6} |q^{(b)}|.
\end{align}
\end{lemma}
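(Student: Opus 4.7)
The plan is straightforward: Lemma 3.3 is essentially a direct consequence of the componentwise estimates already recorded in Lemma 3.2, so I would just aggregate those bounds over $a = 0,1,2,3,4$ and read off the right-hand side. No new geometric computation is needed; this is purely a bookkeeping step that packages the pointwise inequalities into the compact form used in the subsequent energy argument.

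First I would use the elementary equivalence of norms on the finite direct sums defining $X$ and $Y$, namely
\[
|(\partial_\tau + \Delta^2) X| \leq \sum_{a=0}^{4} |(\partial_\tau + \Delta^2) X^{(a)}|, \qquad |\partial_\tau Y| \leq \sum_{a=0}^{4} |\partial_\tau Y^{(a)}|,
\]
together with $\sum_{a=0}^{4} |X^{(a)}| \leq C|X|$ and $\sum_{a=0}^{4} |Y^{(a)}| \leq C|Y|$. I would then substitute the two families of pointwise bounds from Lemma 3.2 into the right-hand sides and collect like terms.

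The only bookkeeping point worth noting is the index range for the $X^{(b)}$-terms. In the bound for $|(\partial_\tau + \Delta^2) X^{(a)}|$ the summation on $X^{(b)}$ runs up to $b = a+2$, which reaches $b=6$ once $a = 4$. The contributions with $b \leq 4$ are absorbed into $C|X|$, while the residual $b = 5$ and $b = 6$ values give exactly the extra terms $|X^{(5)}| + |X^{(6)}|$ displayed on the right-hand side of Lemma 3.3. The $Y^{(b)}$-terms with $b \leq 4$ all fit into $C|Y|$, and the $q^{(b)}$-terms, running over $0 \leq b \leq a+2 \leq 6$, aggregate to $C\sum_{b=0}^{6} |q^{(b)}|$. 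Applying the same procedure to $|\partial_\tau Y^{(a)}|$ produces identical contributions, with the $X^{(b)}$ range again reaching $b = 6$ and the $Y^{(b)}, q^{(b)}$ ranges only reaching $b = a \leq 4$, which are already controlled by $|Y|$ and $\sum_{b=0}^{6}|q^{(b)}|$ respectively.

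Adding the two resulting inequalities and absorbing constants into a single $C$ yields the claimed estimate. There is no genuine obstacle here: the substantive analytic content, including the delicate observation that $(\partial_\tau + \Delta^2)X^{(4)}$ can be controlled without invoking $|Y^{(5)}|$, is already built into Lemma 3.2 via the schematic derivative identities noted in points (I) and (II) just above that lemma. Lemma 3.3 is simply the repackaged form of those bounds that is convenient for the Agmon--Nirenberg energy computation to follow.
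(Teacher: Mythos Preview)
Your proposal is correct and matches the paper's approach exactly: the paper introduces this lemma with the sentence ``Lemma \ref{lem evol eq for backward uniq} can be summarized to'' and provides no separate proof, so the intended argument is precisely the aggregation-and-bookkeeping step you describe.
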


To apply Agmon-Nirenberg's energy method we need to control $\sum_{b=0}^{6} |q^{(b)}|$
 by $|Y| +$ $ |X| +|X^{(5)}| +|X^{(6)}|$, actually controlling the $L^2$-norm
 of $\sum_{b=0}^{6} |q^{(b)}|$  is enough.
 
\begin{lemma} \label{lem control L2 of q(a)}
We have 
\begin{align*}
& \int_M \sum_{b=0}^2 | q^{(b)} |^2 d \mu \leq  C \int_M  \left ( \sum_{b=0}^2 | X^{(b)}
|^2+\sum_{c=0}^1| Y^{(c)} |^2  \right )   d \mu,   \\
&\int_M | \nabla q^{(a+2)} |^2 d \mu \leq C \int_M \left ( \sum_{b=0}^{a+2} | X^{(b)}|^2
+\sum_{c=0}^{\max \{a, 2 \}}| Y^{(c)} |^2  \right )   d \mu  \, \text{ for } a=1, 2, 3, 4.
\end{align*}
\end{lemma}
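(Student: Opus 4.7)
The plan is to derive a second-order elliptic equation for $q^{(0)} = p - \tilde{p}$ by subtracting the two pressure equations in (\ref{eq cbf eq p ellipt}), and then to extract the claimed $L^2$-estimates via elliptic regularity of the invertible operator $(n-1)\Delta_{g(t)} + s_0$. Setting $F(h) \doteqdot -(n-2)A(h)\cdot B(h) + \nabla^2_{h}\cdot B(h)$, subtracting the $\tilde{g}$-equation from the $g$-equation yields
\[
\bigl((n-1)\Delta_{g(t)} + s_0\bigr)\, q^{(0)} \;=\; G, \qquad G \doteqdot \bigl(F(g)-F(\tilde g)\bigr) + (n-1)(\Delta_{\tilde g}-\Delta_g)\tilde p.
\]
By Lemma \ref{eq div B is fourth order}, $F$ is of order two in curvature (fourth order in $g$), so expanding $F(g)-F(\tilde g)$ schematically and collecting curvature differences into $X^{(b)}$'s and metric/connection differences into $Y^{(c)}$'s produces the pointwise bound $|G| \le C\bigl(\sum_{b=0}^{2}|X^{(b)}| + \sum_{c=0}^{1}|Y^{(c)}|\bigr)$, with constant depending on the smooth data $g,\tilde g,\tilde p$.

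For the first inequality, I apply $H^{2}$-elliptic regularity for the invertible operator $(n-1)\Delta_{g(t)}+s_0$ to get $\|q^{(0)}\|_{H^{2}} \le C\|G\|_{L^{2}}$. Since $p,\tilde p$ are scalars, $q^{(1)} = dq^{(0)}$ identically; and a direct computation of $\nabla^2 p - \nabla^2_{\tilde g}\tilde p$ gives $q^{(2)} = \nabla^2 q^{(0)} + Y^{(1)}\ast d\tilde p$. Squaring, integrating and combining with the bound on $G$ yields the first inequality.

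For the second inequality I will differentiate the elliptic equation $a+1$ times and apply elliptic regularity to $\nabla^{a+1}q^{(0)}$, inducting on $a$ with the first inequality as the base case. The commutator $[\nabla^{a+1},\Delta_g]q^{(0)}$ generates curvature terms acting on lower derivatives of $q^{(0)}$, which are absorbed inductively using the previous step. Using the iteration $\nabla X^{(k)} = X^{(k+1)} + Y^{(1)}\ast(\text{smooth})$ together with the identity $\nabla q^{(a+2)} = \nabla^{a+3}q^{(0)} + \sum_{c\le\max\{a,2\}} Y^{(c)}\ast(\text{smooth})$, obtained by repeatedly commuting $\nabla$'s past $\nabla^{2}_{\tilde g}\tilde p$, I will express $\|\nabla q^{(a+2)}\|_{L^2}^{2}$ in the claimed form. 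The main obstacle is the careful bookkeeping of indices on the $X^{(b)}$'s: naively $\nabla^{a+1}G$ would produce $X^{(a+3)}$, and the reduction to the stated $b\le a+2$ hinges on using the elliptic equation itself to convert one Laplacian in $\nabla^{a+3}q^{(0)}$ into the source $G$, so that only $a+1$ rather than $a+3$ derivatives ultimately hit the curvature; here the second-order structure of $G$ in $\operatorname{Rm}$ from Lemma \ref{eq div B is fourth order} is essential.
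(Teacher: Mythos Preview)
Your treatment of the first inequality matches the paper's: subtract the two pressure equations to get $((n-1)\Delta_g+s_0)q^{(0)}=G$ with $|G|\le C(\sum_{b\le 2}|X^{(b)}|+\sum_{c\le 1}|Y^{(c)}|)$, then invoke $W^{2,2}$-regularity of the invertible operator and recover $q^{(1)},q^{(2)}$ from $q^{(0)}$.

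For the second inequality there is a real gap, and it sits exactly where the paper warns you. Your plan is to control $\nabla^{a+3}q^{(0)}$ via higher elliptic regularity for $q^{(0)}$ and then pass to $\nabla q^{(a+2)}$ through the identity you state. But that identity is false: from $q^{(2)}=\nabla^2 q^{(0)}+Y^{(1)}*d\tilde p$ one has
\[
\nabla q^{(a+2)}=\nabla^{a+1}q^{(2)}=\nabla^{a+3}q^{(0)}+\nabla^{a+1}\bigl(Y^{(1)}*d\tilde p\bigr),
\]
and the Leibniz expansion of the last term produces $Y^{(1)},\dots,Y^{(a+2)}$, not merely $Y^{(c)}$ with $c\le\max\{a,2\}$. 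Already at $a=3$ this forces $Y^{(5)}$ into the bound, which lies outside $Y=Y^{(0)}\oplus\cdots\oplus Y^{(4)}$ and breaks the energy argument in \S\ref{subsec Prof thm 1.2}. Remark (II) preceding Lemma~\ref{lem evol eq for backward uniq} is precisely about this point: $q^{(k)}$ is defined as $\nabla^{k-2}q^{(2)}$, not $\nabla^{k}q^{(0)}$, so that one never has to commute $\nabla_g$'s past $\nabla^2_{\tilde g}\tilde p$. Accordingly the paper does not differentiate the $q^{(0)}$-equation at all for the second estimate; it derives an elliptic equation directly for $q^{(2)}=\nabla^2 p-\nabla^2_{\tilde g}\tilde p$ (apply $(n-1)\Delta_g+s_0$ to each summand and use the two pressure equations), whose source is already expressed in $Y^{(0)},Y^{(1)},Y^{(2)}$ and the $X^{(b)}$'s, and then applies $W^{a,2}$-regularity to \emph{that} equation. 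This avoids your conversion step entirely and with it the two extra orders in $Y$.

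Your closing remark about the $X$-index (``convert one Laplacian in $\nabla^{a+3}q^{(0)}$ into the source $G$'') flags a genuine bookkeeping issue, but the proposed mechanism does not work as written: the equation only gives the scalar $\Delta q^{(0)}$, not the tensor $\nabla^2 q^{(0)}$, so you cannot trade two covariant derivatives for $G$ pointwise. In the paper's route via $q^{(2)}$ this difficulty is absorbed into the construction of the source term from the outset.
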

\begin{proof} (sketch) (compare to the proof of Lemma \ref{lem interpolation for p deriv via rm} below)
 The first inequality follows from $W^{2,2}$-estimate for the elliptic equation of $\left ( (n-1)
  \Delta +s_0 \right ) (p -\tilde{p})$ derived from taking the difference of second equation in
   (\ref{eq cbf eq p ellipt}) for $p$ and $\tilde{p}$ (see the proof of Lemma
    \ref{lem lip property of mathcal P op}). Here we use the assumption that
     $(n-1)\Delta_{g(t)} + s_0 $ is invertible. The second inequality follows from 
      $W^{a,2}$-estimate for the elliptic equation of $\left ( (n-1) \Delta +s_0 \right ) (\nabla^2 p
       - \nabla_{\tilde{g}(t)}^2 \tilde{p})$ which equals to an expression of $ Y^{(0)},  
       Y^{(1)},  Y^{(2)}$, and $X$.
\end{proof}

\subsection{Proof of Theorem \ref{thm backward uniqu CBF}} 
\label{subsec Prof thm 1.2}

Define 
\[
E(t) \doteqdot \int_M \left ( |X |^2 +|Y|^2 \right ) d \mu, \quad F(t) \doteqdot  \int_M
| \Delta X |^2  d \mu, 
\]
and their quotient  $N(t) \doteqdot \frac{F(t)}{E(t)}$. Using  Lemma \ref{lem evol eq for
 summaried backward uniq} and \ref{lem control L2 of q(a)} and by a calculation as done 
 in \cite[\S2, \S 4]{Ko16} we have
\begin{equation*}
\frac{d E}{d \tau} \leq C E +2F + C \int_M \left ( |  X^{(5)} |^2+ |  X^{(6)} |^2 \right ) d \mu.
\end{equation*}
In the calculation we use the backward and forward parabolic operators $\mathcal{L}_B =
 \partial_{\tau} -  \Delta^2$ and $\mathcal{L}_F = \partial_{\tau} +  \Delta^2$. As in 
 \cite[\S 4]{Ko16} (see also  \S \ref{subsec interpol ineq} below) we may use interpolation inequalities 
 to bound $\int_M ( |X^{(5)}|^2 + |X^{(6)}|^2 ) d \mu$ by  $\int_M |\Delta X |^2 d \mu$ 
 and $\int_M |X |^2 d \mu$, hence we get

\begin{equation} \label{eq energy L2 growth X and Y}
\frac{d E}{d \tau} \leq C (E +F) \leq C(N+1) E.
\end{equation}

Using Lemma \ref{lem control L2 of q(a)} and following the proof in \cite[\S 4]{Ko16} we can show 
\begin{align*}
& \frac{d F}{d \tau}  \geq  - C (E +F) + \frac{1}{2} \left ( \int_M | \mathcal{L}_F X |^2 d \mu 
- \int_M | \mathcal{L}_B  X |^2 d \mu  \right ), \\
& \frac{d N}{d \tau}  \geq -C(N+1) - \frac{1}{2E} \left ( \int_M | \mathcal{L}_B X |^2 d \mu 
+  \int_M | \partial_{\tau} Y |^2 d \mu  \right ).
\end{align*}
The remaining argument for the backward uniqueness  is the same as that in \cite[\S 4]{Ko16}.

\section{Integral version of Shi's type estimate for CBF} 
\label{sect Shi conformal Bach f}

We will prove the following  Shi's type $L^2$-estimate of derivatives of curvature 
for CBF (\ref{eq cbf eq p ellipt}). 
The estimate will be used in \S \ref{subsec appl Shi type est singular time} 
to characterize the time when CBF develops a singularity and 
in \S \ref{subsec appl Shi type compactness} to show the compactness of
a sequence of solutions of CBF.

\begin{theorem}\label{thm int Shi curvature derivative est cbf}
Let $(g(t), p(t)), \, t \in [0, T)$, be a smooth solution of CBF on closed manifold $M^n$ 
with constant scalar curvature $s_0$. Let constant $\alpha >0$. We assume that there is
 a constant $K>0$ such that the curvature of $g(t)$ and  potential function $p(t)$ satisfy
\[
\sup_{ (x,t) \in M \times \left [0, \min\{\frac{\alpha}{K}, T \} \right ]}
  \left( |\operatorname{Rm}(x,t) | +  | p (x,t)| \right) \leq K .
\]  
We also assume that operator $(n-1)\Delta_{g(t)} + s_0 $ is invertible for each $t$ with
 $\| ( (n-1) \Delta_{g(t)} + s_0)^{-1} \|_{L(C^{\alpha}, C^{2+\alpha})}\leq K$. 
 Then for any $m \in \mathbb{N}$ there exists a constant ${C}= {C}(n,s_0,
  \alpha, K, m)$ such that for all $t \in (0,  \min\{\frac{\alpha}{K}, T \}]$ we have
\begin{equation} \label{eq shi est in thm definite form}
\int_M | \nabla_{g(t)}^{m} \operatorname{Rm} (\cdot, t) |_{g(t)}^2 d \mu_{g(t)}
 \leq \frac{{C} \cdot \int_M |  \operatorname{Rm} (\cdot ,0)|_{g(0)}^2 d \mu_{g(0)} }{t^{m/2}} .
\end{equation}
\end{theorem}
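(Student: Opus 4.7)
The plan is to adapt the fourth-order $L^2$-energy method developed by Streets (\cite{St08, St11, St13}) for the $L^2$-Riemann flow to the CBF system, proving (\ref{eq shi est in thm definite form}) by induction on $m$. The base case $m=0$ reduces to controlling the growth of $\int_M |\operatorname{Rm}|^2\, d\mu$, which follows from the hypothesis $|\operatorname{Rm}|, |p| \le K$ together with the volume-form evolution (note that since $B$ is trace-free and $\partial_t g = 2(n-2)(B+pg)$, one has $\partial_t d\mu_{g(t)} = n(n-2) p\, d\mu_{g(t)}$).

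For the inductive step, I would first derive, by commuting $\nabla^m$ through the evolution induced by $\partial_t g = 2(n-2)(B+pg)$, the schematic evolution equation
\begin{equation*}
(\partial_t + \Delta^2)\nabla^m\operatorname{Rm} = \sum_{a+b+c=m} \nabla^a\operatorname{Rm} \ast \nabla^b\operatorname{Rm} \ast \nabla^c\operatorname{Rm} + \sum_{\substack{a+b=m+2\\ b\geq 2}} \nabla^a\operatorname{Rm}\ast\nabla^b p + p\ast\nabla^m\operatorname{Rm}.
\end{equation*}
The $\nabla^b p$ terms (with $b$ up to $m+2$) are why the pressure cannot be ignored; to dispose of them, one differentiates the elliptic equation $((n-1)\Delta_{g(t)}+s_0)p = -(n-2)A\cdot B + \nabla^2\cdot B$ and uses the hypothesized uniform invertibility of $(n-1)\Delta_{g(t)}+s_0$ to obtain $W^{k,2}$-type estimates
\begin{equation*}
\|\nabla^{k+2} p\|_{L^2}^2 \leq C(n, K, k)\Big(\|\operatorname{Rm}\|_{L^\infty}^2\,\|\nabla^{k+2}\operatorname{Rm}\|_{L^2}^2 + \sum_{j\le k+1}\|\nabla^j\operatorname{Rm}\|_{L^2}^2\Big).
\end{equation*}
Differentiating $\int_M |\nabla^m\operatorname{Rm}|^2\, d\mu$ in $t$, using integration by parts to extract the dissipation $-2\int_M|\Delta\nabla^m\operatorname{Rm}|^2\, d\mu$, and applying Gagliardo--Nirenberg interpolation with the uniform $L^\infty$ bounds on $\operatorname{Rm}$ and $p$ to absorb all cubic and pressure-dependent terms, one obtains a differential inequality of the form
\begin{equation*}
\frac{d}{dt}\int_M|\nabla^m\operatorname{Rm}|^2\, d\mu \leq -\tfrac12\int_M|\nabla^{m+2}\operatorname{Rm}|^2\, d\mu + C(n,K,m)\sum_{j<m}\int_M|\nabla^j\operatorname{Rm}|^2\, d\mu.
\end{equation*}
To extract the sharp rate $t^{-m/2}$ I would work with the weighted energy
$\Phi_m(t) = \sum_{k=0}^m \varepsilon_k\, t^{k/2}\int_M|\nabla^k\operatorname{Rm}|^2\, d\mu$
with small constants $\varepsilon_k$ chosen so that the positive $\tfrac{k}{2}t^{k/2-1}$ contribution from each level is absorbed by the dissipation at the next level, a standard Shi-type telescoping matched here to the fourth-order parabolic scaling.

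The principal obstacle is the pressure coupling: because $p$ is determined elliptically by $\operatorname{Rm}$, every inductive step requires simultaneous control of $\|\nabla^{k+2}p\|_{L^2}$ in terms of $\|\nabla^{k+2}\operatorname{Rm}\|_{L^2}$, and the order-matching between the highest derivative of $p$ appearing in the evolution equation and the dissipation $\nabla^{m+2}\operatorname{Rm}$ must be exactly tight; the constants must be tracked carefully through the assumed bound on $\|((n-1)\Delta_{g(t)}+s_0)^{-1}\|_{L(C^\alpha, C^{2+\alpha})}$, and this is where the hypothesis on uniform invertibility does its essential work. A secondary difficulty, inherent to fourth-order flows, is that the Kato-type computation of $\partial_t|\nabla^m\operatorname{Rm}|^2$ produces commutator terms of all intermediate orders, and closing the energy estimate requires a systematic deployment of interpolation inequalities in the spirit of \cite{St08, St11} to absorb them into the leading dissipation.
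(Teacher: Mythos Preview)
Your proposal is correct and follows essentially the same strategy as the paper: derive the schematic evolution of $\nabla^m\operatorname{Rm}$, bound the pressure derivatives through the elliptic equation for $p$ (this is exactly where the invertibility of $(n-1)\Delta_{g(t)}+s_0$ is used, as you note), obtain a differential inequality with dissipation $\int|\nabla^{m+2}\operatorname{Rm}|^2$, and close via a time-weighted telescoping energy.

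The only bookkeeping differences worth mentioning are these. First, the paper telescopes only through the \emph{even} derivatives, setting
\[
f_a(t)=\sum_{j=0}^{a}\frac{t^{j}}{j!}\int_M|\nabla^{2j}\operatorname{Rm}|^2\,d\mu,
\]
and recovers the odd-$m$ estimate at the end by the elementary interpolation $\|\nabla^{2a+1}\operatorname{Rm}\|_2^2\le\|\nabla^{2a}\operatorname{Rm}\|_2\|\nabla^{2a+2}\operatorname{Rm}\|_2$; your $\Phi_m(t)=\sum_{k}\varepsilon_k t^{k/2}\int|\nabla^k\operatorname{Rm}|^2$ works as well but is slightly redundant since the fourth-order dissipation already jumps by two derivatives. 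Second, the paper uses interpolation to drive the right-hand side of the differential inequality all the way down to $C_1\|\operatorname{Rm}\|_2^2+C_2\|p\|_2^2$ (Proposition 4.8), rather than leaving the intermediate sum $\sum_{j<m}\|\nabla^j\operatorname{Rm}\|_2^2$; this makes the telescoping cleaner but is not essential. Finally, for the highest pressure term $\nabla^{m+2}p$ the paper first integrates by parts to $\int\nabla^{m+2}\operatorname{Rm}\ast\nabla^m p$ and then invokes the bound $\int|\nabla^m p|^2\le\epsilon\int|\nabla^{m+2}\operatorname{Rm}|^2+C\|\operatorname{Rm}\|_2^2+C\|p\|_2^2$, which makes the absorption into the dissipation transparent; your direct $W^{k,2}$ route achieves the same thing.
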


The technique to derive the above estimate is standard (see, for example,
 \cite[Theorem 5.4]{St08}). We will prove the estimate through several subsections.

\subsection{Evolution equation of $\nabla_{g(t)}^a \operatorname{Rm}_{g(t)}$}
First we derive the evolution equation of $(0,4)$-curvature tensor $\operatorname{Rm} 
= \operatorname{Rm}_{g(t)}$, i.e.,  a formula for $(\partial_t  + \Delta_{g(t)}^2 ) 
\operatorname{Rm}$. We introduce a convenient notation
\begin{equation} \label{eq Bs m def notation}
B_s^k(\operatorname{Rm}) =\sum_{i_1+ \cdots +i_s =k} C_{i_1, \cdots, i_s} \nabla^{i_1} 
\operatorname{Rm} * \cdots * \nabla^{i_s} \operatorname{Rm}
\end{equation}
for some constants $C_{i_1, \cdots, i_s}$.

\begin{lemma}  
Let $(g(t), p(t))$ be a  smooth solution of CBF (\ref{eq cbf eq p ellipt}) on manifold $M^n$
 with constant scalar curvature $s_0$. Then 
\begin{equation} \label{eq evol of Riema curv cbf}
\partial_t \operatorname{Rm}  + \Delta^2 \operatorname{Rm}=  B_2^2 (\operatorname{Rm}) 
+  B_3^0 (\operatorname{Rm})  +  2(n-2) p \operatorname{Rm}+ (n-2) T (\nabla^2 p),
\end{equation}
where $(0,4)$-tensor $T (\nabla^2 p)$ is defined by
\[
T (\nabla^2 p)_{ijkl} = g_{jl} \nabla_i \nabla_k p - g_{jk}   \nabla_i \nabla_l p - g_{il} 
\nabla_j \nabla_k p  + g_{ik} \nabla_j \nabla_l p .
\]
\end{lemma}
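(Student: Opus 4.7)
The plan is to derive the evolution equation by substituting the CBF equation into the standard variation formula for the Riemann tensor and carefully identifying which terms contribute to the leading fourth-order operator $\Delta^2$ versus which terms collapse into the schematic remainders $B_2^2(\operatorname{Rm})$ and $B_3^0(\operatorname{Rm})$.

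I would start from the general first-variation formula for the $(0,4)$ Riemann tensor under an arbitrary metric deformation $\partial_t g_{ij}= v_{ij}$, namely
\begin{equation*}
\partial_t R_{ijkl} = \tfrac{1}{2}\bigl(\nabla_i\nabla_k v_{jl} + \nabla_j\nabla_l v_{ik} - \nabla_i\nabla_l v_{jk} - \nabla_j\nabla_k v_{il}\bigr) + \operatorname{Rm}\ast v,
\end{equation*}
where the $\operatorname{Rm}\ast v$ tail collects all algebraic terms coming from differentiating the Christoffel symbols' coefficients and from $\partial_t g_{lm}$ in raising/lowering indices. For CBF I would plug in $v_{ij}=2(n-2)(B_{ij}+pg_{ij})$. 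The $pg$ portion of $v$ splits cleanly: the double-derivative part of the variation formula immediately produces the tensor $(n-2)T(\nabla^2 p)$ (from the four terms $g_{jl}\nabla_i\nabla_k p$, etc.), and the $\operatorname{Rm}\ast v$ tail contributes $2(n-2)p\operatorname{Rm}$ plus terms absorbable into $B_3^0(\operatorname{Rm})$.

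For the Bach tensor portion of $v$, I would invoke the schematic expression (\ref{eq Bach definition}) which, because $S_{g(t)}\equiv s_0$ is constant (so $\nabla S$ and $\Delta S$ vanish), reduces to
\begin{equation*}
B_{ij} = \tfrac{1}{n-2}\Delta R_{ij} + \text{terms of schematic form }\operatorname{Rm}\ast\operatorname{Rm}.
\end{equation*}
Thus $2(n-2)B_{ij}= 2\Delta R_{ij}+B_2^0(\operatorname{Rm})$. Substituting this into $\tfrac{1}{2}(\nabla_i\nabla_k v_{jl}+\cdots)$ yields, as the principal symbol, the combination $\nabla_i\nabla_k\Delta R_{jl}+\nabla_j\nabla_l\Delta R_{ik}-\nabla_i\nabla_l\Delta R_{jk}-\nabla_j\nabla_k\Delta R_{il}$. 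Using the contracted second Bianchi identity $\nabla^j R_{ijkl}=\nabla_k R_{il}-\nabla_l R_{ik}$ together with the commutation $[\nabla,\Delta]=\operatorname{Rm}\ast\nabla$, this combination equals $-\Delta^2 R_{ijkl}$ modulo terms that are schematically $\operatorname{Rm}\ast\nabla^2\operatorname{Rm}$, $\nabla\operatorname{Rm}\ast\nabla\operatorname{Rm}$, and $\operatorname{Rm}\ast\operatorname{Rm}\ast\operatorname{Rm}$, i.e.\ exactly the content of $B_2^2(\operatorname{Rm})+B_3^0(\operatorname{Rm})$.

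The main obstacle is bookkeeping: verifying that after rewriting $2\nabla\nabla\Delta\operatorname{Rc}$ as $-\Delta^2\operatorname{Rm}$ via Bianchi and commutator identities, every leftover commutator and every contraction from the $\operatorname{Rm}\ast v$ tail of the Bach contribution has the correct derivative-count to land in either $B_2^2$ (two Rm factors, two extra derivatives) or $B_3^0$ (three Rm factors, no derivatives). A scaling check is reassuring: both $\Delta^2\operatorname{Rm}$ and each term in $B_2^2(\operatorname{Rm})$, $B_3^0(\operatorname{Rm})$ have weight $6$ in derivatives of $g$, as do $p\operatorname{Rm}$ and $T(\nabla^2 p)$ once one assigns $p$ the weight consistent with $\partial_t g \sim B\sim \nabla^4 g$. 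I would carry out the contraction bookkeeping once and then simply declare remaining pieces absorbed into $B_2^2(\operatorname{Rm})+B_3^0(\operatorname{Rm})$, avoiding writing out all constants.
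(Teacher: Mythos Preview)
Your proposal is correct and follows essentially the same route as the paper: both start from the variation formula \cite[(2.67)]{CLN6}, plug in $h=2(n-2)(B+pg)$, use the constant scalar curvature to reduce the Bach tensor via (\ref{eq Bach definition}) to $\tfrac{1}{n-2}\Delta R_{ij}+\operatorname{Rm}\ast\operatorname{Rm}$, read off the $T(\nabla^2 p)$ and $2(n-2)p\operatorname{Rm}$ terms from the conformal part, and then convert the four $\nabla\nabla\Delta\operatorname{Rc}$ terms into $-\Delta^2 R_{ijkl}$ plus $B_2^2+B_3^0$ via the Ricci identity and the second Bianchi identity (what the paper cites as \cite[(1.30) and (2.64)]{CLN6}). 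Your scaling check and bookkeeping remarks are exactly the right way to make sure nothing leaks out of the schematic bins.
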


\begin{proof}
From \cite[(2.67)]{CLN6} we have
\begin{align*}
\partial_t R_{ijkl}=   & \frac{1}{2} \left (\nabla_i \nabla_k h_{jl} - \nabla_i \nabla_l h_{jk}
 - \nabla_j\nabla_k h_{il} + \nabla_j \nabla_l h_{ik} \right ) + \frac{1}{2} \left ( R_{ijkp}
 h_{pl} + R_{ijpl}h_{pk} \right ),
\end{align*}
where $h_{ij} = 2(n-2) (B_{ij}(g)+pg_{ij})$. From (\ref{eq Bach definition}) and 
scalar curvature $S_{g(t)} =s_0$ we have

\begin{align*}
\partial_t R_{ijkl}=   & \nabla_i \nabla_k \Delta R_{jl} - \nabla_i \nabla_l \Delta R_{jk}- 
\nabla_j\nabla_k \Delta R_{il} + \nabla_j \nabla_l \Delta R_{ik} + B_2^2 (
\operatorname{Rm}) +  B_3^0 (\operatorname{Rm})  \\  
&+ 2(n-2) p R_{ijkl} +(n-2) \left ( g_{jl} \nabla_i \nabla_k p -  g_{jk}\nabla_i \nabla_l p
 - g_{il} \nabla_j \nabla_k p + g_{ik} \nabla_j \nabla_l p   \right ) .
\end{align*}
Here we treat $s_0$ as a $\operatorname{Rm}$ factor. The equality (\ref{eq evol of Riema
 curv cbf}) then follows from Ricci identity (\cite[(1.30)]{CLN6}) and a well-known formula
  (\cite[(2.64)]{CLN6}).
\end{proof}

From (\ref{eq evol of Riema curv cbf}) we may derive the following by a direct calculation
which we omit.

\begin{lemma} 
Let $(g(t), p(t))$ be a smooth solution of CBF (\ref{eq cbf eq p ellipt}) on manifold $M^n$
 with constant scalar curvature $s_0$. Then
\begin{align}
\partial_t (\nabla^a  \operatorname{Rm}) + \Delta^2 (\nabla^a  \operatorname{Rm}) 
= & B_2^{2+a} (\operatorname{Rm}) + B_3^{a} (\operatorname{Rm}) + 2(n-2) p 
\nabla^a \operatorname{Rm} \notag \\
& + \sum_{b=1}^a   \nabla^b p * \nabla^{a-b}  \operatorname{Rm} 
+(n-2) \nabla^a (T(\nabla^{2}p)).  \label{eq evol nabla rm cbf}
\end{align}
\end{lemma}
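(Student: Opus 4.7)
The plan is to prove \eqref{eq evol nabla rm cbf} by induction on $a$, with base case $a=0$ supplied by \eqref{eq evol of Riema curv cbf}. Given the identity at level $a$, I would apply $\nabla = \nabla_{g(t)}$ to both sides and reorganize the result into the prescribed schematic buckets $B_2^{3+a}(\operatorname{Rm})$, $B_3^{1+a}(\operatorname{Rm})$, the pressure-linear term, the $\sum_{b=1}^{a+1}\nabla^b p\ast\nabla^{a+1-b}\operatorname{Rm}$ sum, and the highest-order term $(n-2)\nabla^{a+1}(T(\nabla^2 p))$. On the right-hand side this is just the Leibniz rule: $\nabla$ of a $B_2^{2+a}$ expression is a $B_2^{3+a}$ expression, $\nabla$ of a $B_3^{a}$ expression is a $B_3^{1+a}$ expression, and $\nabla(2(n-2)p\,\nabla^a\operatorname{Rm})$ splits into the pressure-linear piece at level $a+1$ plus a new $\nabla p\ast\nabla^a\operatorname{Rm}$ term that merges into the sum; likewise $\nabla$ applied to the existing sum extends the range of summation.

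The essential work is on the left: I must commute $\nabla$ past $\partial_t$ and past $\Delta^2$ on $\nabla^a \operatorname{Rm}$. For the time commutator, the identity $\partial_t \nabla^a \operatorname{Rm} - \nabla \partial_t(\nabla^{a-1}\operatorname{Rm})$ involves the time derivative of Christoffel symbols, and under CBF
\begin{equation*}
\partial_t \Gamma \;\sim\; \nabla(\partial_t g) \;=\; 2(n-2)\,\nabla\!\bigl(B(g) + p\,g\bigr).
\end{equation*}
Using the schematic expression \eqref{eq Bach definition} for Bach tensor this yields contributions of the form $\nabla^3\operatorname{Rm} + \operatorname{Rm}\ast\nabla\operatorname{Rm} + \nabla p$, each then contracted against a copy of $\nabla^{a-1}\operatorname{Rm}$ (or lower derivatives of $\operatorname{Rm}$ when iterating). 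These produce precisely terms of type $B_2^{3+a}$, $B_3^{1+a}$, and $\sum \nabla^b p \ast \nabla^{a+1-b}\operatorname{Rm}$, which match the target buckets. The spatial commutator $[\nabla,\Delta^2]$ on a tensor is a standard Ricci-identity computation producing $\operatorname{Rm}\ast\nabla^3(\cdot) + \nabla\operatorname{Rm}\ast\nabla^2(\cdot) + \cdots$, which when applied to $\nabla^a\operatorname{Rm}$ contributes only to $B_2^{3+a}(\operatorname{Rm})$.

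The main obstacle is bookkeeping rather than novel analysis: I must verify that no term in the Leibniz expansion or the commutators escapes the prescribed schematic form. The two potentially dangerous outputs are (i) pressure derivatives beyond $\nabla^{a+2}p$ and (ii) curvature terms with too many total derivatives. Issue (i) is controlled because the only place $\nabla^2 p$ enters the base equation \eqref{eq evol of Riema curv cbf} is inside $T(\nabla^2 p)$, and all other $p$-dependence is linear with no derivatives on $p$; so after $\nabla^a$ the highest pressure derivative is $\nabla^{a+2}p$ inside $\nabla^a(T(\nabla^2 p))$, consistent with the statement. Issue (ii) is handled by noting that $\nabla B$ contributes at most $\nabla^3\operatorname{Rm}$ regardless of $a$, and this additional factor of $\nabla^3\operatorname{Rm}$ is paired with a factor of at most $\nabla^{a-1}\operatorname{Rm}$ coming from the commutator structure, giving total derivative count $2+a$, again matching $B_2^{2+a}$. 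Once these two checks are made, collecting terms across all commutators and Leibniz expansions produces \eqref{eq evol nabla rm cbf} at level $a+1$, closing the induction.
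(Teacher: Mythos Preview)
Your proposal is correct and follows the same route the paper has in mind: the paper simply says the lemma ``follows from \eqref{eq evol of Riema curv cbf} by a direct calculation which we omit,'' and your induction on $a$ with commutator bookkeeping is exactly that direct calculation spelled out. One small indexing slip: in the time-commutator step you write that $\nabla^3\operatorname{Rm}$ (from $\partial_t\Gamma$) pairs with $\nabla^{a-1}\operatorname{Rm}$; in fact $[\partial_t,\nabla]$ acts on $\nabla^a\operatorname{Rm}$, so the pairing is with $\nabla^a\operatorname{Rm}$, giving total order $3+a = 2+(a+1)$ and landing in $B_2^{2+(a+1)}$ as required at level $a+1$. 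This does not affect the argument.
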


\subsection{An interpolation inequality} \label{subsec interpol ineq}
We will need the following inequality (compare \cite[Corollary 5.5]{KS02} and
\cite[Lemma 10.3]{St08}).

\begin{lemma}
Let $(M^n,g)$ be a closed Riemannian manifold. 
Let integers $0 \leq i_1, \cdots, i_r \leq a \in \mathbb{N}$ and $i_1 + \cdots + i_r = 2a$.
 Then for any $\epsilon >0$ and nonnegative integers $r_1$ and $r_2$ there is a constant 
 $C =C(n, a, r, r_1, r_2, \epsilon)$ such that for any tensor $W$ of type $(r_1,r_2)$ on $M$ we have
\begin{equation} \label{eq interpol m+1 power}
\left | \int_M \nabla^{i_1} W * \cdots * \nabla^{i_r}W d \mu_g \right | \leq \epsilon \int_M|
 \nabla^{a+1} W|^2 d \mu_g + C \| W\|_{\infty}^{(a+1)(r-2)} \cdot\int_M |W|^2 d \mu_g,
\end{equation}
where $\| W\|_{\infty}$ is the $C^0$-norm.
\end{lemma}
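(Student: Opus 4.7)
The plan is to prove this in the classical style of Hamilton's tensor interpolation inequalities, combining H\"older's inequality, a Gagliardo--Nirenberg-type interpolation on closed manifolds, and Young's inequality. This is exactly the pattern used in the cited references \cite{KS02, St08}, so the main work is the careful bookkeeping of exponents rather than any genuinely new idea.

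First, I would reduce pointwise to a product of scalar norms,
\[
|\nabla^{i_1} W \ast \cdots \ast \nabla^{i_r} W| \leq C_0 \prod_{k=1}^r |\nabla^{i_k} W|,
\]
where $C_0$ depends only on $n$, $r$, $r_1$, $r_2$ and the contraction pattern of $\ast$. I would then apply H\"older's inequality to $\int_M \prod_k |\nabla^{i_k} W|\,d\mu_g$, choosing the exponents so that two copies of the undifferentiated $W$ are kept in $L^2$ (producing the $\|W\|_{L^2}^2$ on the right-hand side), while each differentiated factor $|\nabla^{i_k}W|$ with $i_k \geq 1$ is controlled in $L^{2(a+1)/i_k}$, and any $i_k = 0$ factors beyond the two $L^2$ slots are pulled out in $L^\infty$. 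Because $\sum i_k = 2a$, the total H\"older budget on the derivative factors is $a/(a+1)$, leaving precisely the residual exponent $a+1$ that accommodates the two $L^2$ slots.

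Second, I would invoke the standard closed-manifold interpolation (Hamilton's inequality), proved by iterated integration by parts: for $1 \leq i \leq a$,
\[
\int_M |\nabla^i W|^{2(a+1)/i}\,d\mu_g \leq C\,\|W\|_{L^\infty}^{2(a+1-i)/i}\int_M |\nabla^{a+1}W|^2\,d\mu_g,
\]
equivalently $\|\nabla^i W\|_{L^{2(a+1)/i}} \leq C\,\|\nabla^{a+1}W\|_{L^2}^{i/(a+1)}\|W\|_{L^\infty}^{(a+1-i)/(a+1)}$. Applying this to each derivative factor coming out of the H\"older step and collecting exponents, the $\|\nabla^{a+1}W\|_{L^2}$ exponents sum to $2a/(a+1)$ and the $\|W\|_{L^\infty}$ exponents sum to $r - 2 - 2a/(a+1)$, while the two $L^2$ slots contribute $\|W\|_{L^2}^2$.

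Finally, by Young's inequality with conjugate exponents $(a+1)/a$ and $a+1$, the derivative factor $\|\nabla^{a+1}W\|_{L^2}^{2a/(a+1)}$ can be absorbed into $\epsilon\,\|\nabla^{a+1}W\|_{L^2}^2$, and the residual power of $\|W\|_{L^\infty}$ collects to exactly $(a+1)(r-2)$, giving the desired bound $C_\epsilon\,\|W\|_{L^\infty}^{(a+1)(r-2)}\,\|W\|_{L^2}^2$ on the remaining term. The main obstacle is arranging the H\"older allocation so that the undifferentiated $W$'s sit in $L^2$ rather than in $L^\infty$; this is pure bookkeeping of exponents and does not pose any essential difficulty since the scaling $\sum i_k = 2a$ is precisely what makes the exponents balance. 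All other ingredients (Hamilton's interpolation via integration by parts, Young's inequality) are completely standard.
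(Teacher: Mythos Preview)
Your H\"older allocation is broken in two separate ways, and this is a genuine gap rather than mere bookkeeping.

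First, there need not be \emph{any} undifferentiated factors to place in $L^2$: for instance if $r=2$ then necessarily $i_1=i_2=a$, and more generally one can have all $i_k\geq 1$. So the plan of reserving ``two $L^2$ slots'' for copies of $W$ cannot even get started in these cases.

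Second, even when there are at least two factors with $i_k=0$, your exponents do not sum to $1$. Putting each differentiated factor in $L^{2(a+1)/i_k}$ consumes $\sum_{i_k\geq 1} \tfrac{i_k}{2(a+1)}=\tfrac{a}{a+1}$ of the H\"older budget, leaving only $\tfrac{1}{a+1}$; two $L^2$ slots would require an additional $1$, so the total is $\tfrac{a}{a+1}+1>1$. The phrase ``leaving precisely the residual exponent $a+1$'' is a miscount.

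The paper avoids both issues by interpolating at level $a$, not $a+1$, in the H\"older step: pull all $i_k=0$ factors out in $L^\infty$ and place each $i_k\geq 1$ factor in $L^{2a/i_k}$, so the H\"older sum is exactly $\sum i_k/(2a)=1$. Hamilton's interpolation \cite[Corollary~12.6]{Ha82} at level $a$ then yields the clean intermediate bound
\[
\left|\int_M \nabla^{i_1}W*\cdots*\nabla^{i_r}W\,d\mu_g\right|\leq C\,\|W\|_\infty^{r-2}\int_M|\nabla^a W|^2\,d\mu_g.
\]
The passage from level $a$ to level $a+1$, and the appearance of $\|W\|_{L^2}^2$, come from a \emph{second}, separate interpolation (Hamilton \cite[Corollary~12.7]{Ha82} plus Young):
\[
A\int_M|\nabla^a W|^2\,d\mu_g\leq \epsilon\int_M|\nabla^{a+1}W|^2\,d\mu_g + C\,A^{a+1}\int_M|W|^2\,d\mu_g,
\]
applied with $A=C\|W\|_\infty^{r-2}$, which produces exactly the exponent $(a+1)(r-2)$. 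The $\|W\|_{L^2}^2$ factor comes from this second step, not from the initial H\"older splitting.
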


\begin{proof} Assume $i_1, \cdots, i_l  \geq 1$ and $i_{l+1}, \cdots, i_r=0$. By the 
H\"{o}lder inequality we have
\begin{align*}
\left | \int_M \nabla^{i_1}W \ast \nabla^{i_2}W \ast \cdots \ast \nabla^{i_r}W d\mu_g \right |
 \leq  \| W\|_{\infty}^{r-l} \cdot \prod_{j=1}^l \left ( \int_M \left| \nabla^{i_j}W 
 \right |^{\frac{2a}{i_j}}d\mu_g \right)^{\frac{i_j}{2a}}.
\end{align*}
By using
\begin{equation} \label{eq Ham82 12.6}
\left ( \int_M \left | \nabla^{i_j}W  \right |^{\frac{2a}{i_j}} d\mu_g \right )^{\frac{i_j}{2a}}
 \leq C \| W \|_\infty^{1 - \frac{i_j}{a}} \left (\int_M \left |\nabla^a W \right |^2d \mu_g
  \right )^{\frac{i_j}{2a}}
\end{equation}
from \cite[Corollary 12.6]{Ha82} we have 

\begin{equation} \label{eq interpol m power de}
\left | \int_M \nabla^{i_1} W * \cdots * \nabla^{i_r}W d \mu_g \right| \leq C \| 
W\|_{\infty}^{r-2} \cdot  \int_M \big| \nabla^a W \big|^2 d\mu_g.
\end{equation}

It follows from \cite[Corollary 12.7]{Ha82} and Young's inequality that for any $A>0$ 
and $1 \leq i \leq a$ we have
\begin{equation} \label{eq interpolation key}
A \int_M |\nabla^i W|^2 d \mu_g \leq \epsilon \int_M |\nabla^{a+1} W|^2 d \mu_g + C 
A^{\frac{a+1}{a+1-i}} \int_M |W|^2 d \mu_g,
\end{equation}
where constant $C$ depends on $\epsilon, i, a$ but not on $A$. The lemma follows from
 combining (\ref{eq interpol m power de}) and (\ref{eq interpolation key}).
\end{proof}

\subsection{$L^2$-estimate of the derivatives of pressure function $p$}
We have

\begin{lemma}  \label{lem interpolation for p deriv via rm}
Let $(g(t), p(t))$ be a smooth solution of CBF (\ref{eq cbf eq p ellipt}) on closed
 manifold $M^n$ with constant scalar curvature $s_0$. Then for any $\epsilon \in 
 (0, 1), \, a \in \mathbb{N}$, and each time $t$ the pressure function $p$ 
 satisfies the following energy estimate
\begin{align}  \label{eq pressure deriv estimate A} 
\int_M |\nabla^a p (t)|^2 d\mu \leq  \epsilon \int_M |\nabla^{a+2}\operatorname{Rm}|
^2d \mu+ C_1\| \operatorname{Rm} (\cdot, t) \|_{2}^2 +C_2\cdot\| p (t) \|^2_2.
\end{align}
Here constant $C_1$ depends on $a, \| \operatorname{Rm} (\cdot, t) \|_\infty \doteqdot
\sup_{x \in M} | \operatorname{Rm} (\cdot, t)|_{g(t)}$, 
and  constant $C_2$ depends on 
$a, s_0$,  $\|  \operatorname{Rm} (\cdot, t) \|_{\infty}$, and $ \| p (t)\|_{\infty}$.
\end{lemma}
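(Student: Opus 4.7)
My plan is to exploit the elliptic equation
\[
L p := ((n-1)\Delta_{g(t)} + s_0) p = -(n-2) A(g) \cdot B(g) + \nabla_{g}^2 \cdot B(g) =: F,
\]
the second line of the CBF system \eqref{eq cbf eq p ellipt}, together with the interpolation inequality \eqref{eq interpol m+1 power}. By Lemma \ref{eq div B is fourth order}, $F$ is a differential expression in $g$ of order at most $4$, so in curvature terms it is schematically a sum of expressions $B_2^0(\operatorname{Rm}) + B_2^1(\operatorname{Rm}) + B_2^2(\operatorname{Rm})$ in the notation \eqref{eq Bs m def notation} (two factors of $\operatorname{Rm}$, total derivative order at most two). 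Under the invertibility of $L$ (implicit here, as in Theorems \ref{thm main short time exist cbf} and \ref{thm backward uniqu CBF}), standard Hilbert-space elliptic regularity on the closed manifold $M$ yields, for $a \geq 2$,
\[
\int_M |\nabla^a p|^2\, d\mu \leq C \sum_{k=0}^{a-2} \int_M |\nabla^k F|^2\, d\mu + C \|p\|_2^2,
\]
with $C$ depending on $n, a, s_0, \|\operatorname{Rm}\|_\infty$, and $\|p\|_\infty$ (the last entering through the lower-order $p$-terms that arise when curvature commutators are absorbed during iterated integration by parts). The cases $a=0$ and $a=1$ are respectively trivial and obtained by testing $Lp = F$ against $p$ and applying Cauchy--Schwarz.

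Next, expand $\nabla^k F$ for $0 \leq k \leq a-2$ as a sum of products $\nabla^{i_1}\operatorname{Rm} * \nabla^{i_2}\operatorname{Rm}$ with $i_1 + i_2 \leq k + 2 \leq a$, so that each summand of $|\nabla^k F|^2$ is a four-fold product $\prod_{\ell=1}^4 \nabla^{i_\ell}\operatorname{Rm}$ with $\sum i_\ell \leq 2a$. H\"older's inequality then lets me pull the two factors of smallest order out in $L^\infty$ (each controlled by $\|\operatorname{Rm}\|_\infty$), reducing the integrals to bounded multiples of $\int_M |\nabla^i \operatorname{Rm}|^2\, d\mu$ for various $i \leq a+1$, weighted by a power of $\|\operatorname{Rm}\|_\infty$.

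Finally, I apply \eqref{eq interpol m+1 power} with $W = \operatorname{Rm}$, $r = 2$, and the parameter ``$a$'' in that lemma taken equal to $a+1$, which absorbs each such intermediate integral into $\epsilon \int_M |\nabla^{a+2}\operatorname{Rm}|^2\, d\mu + C \int_M |\operatorname{Rm}|^2\, d\mu$. Summing all contributions produces the asserted inequality with the stated dependencies of $C_1$ and $C_2$. The main obstacle will be the bookkeeping of derivative counts: since $F$ already carries up to two derivatives of $\operatorname{Rm}$, the two-derivative gap between what the elliptic estimate yields ($\nabla^a \operatorname{Rm}$-type terms after expansion) and the interpolation target ($\nabla^{a+2}\operatorname{Rm}$) is exactly tight, so the four-factor products must be split judiciously to ensure that only a single $\epsilon$-term $\int |\nabla^{a+2}\operatorname{Rm}|^2$ is generated while every other contribution is absorbed cleanly into $C_1\|\operatorname{Rm}\|_2^2 + C_2\|p\|_2^2$.
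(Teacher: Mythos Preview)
Your outline is close to the paper's argument but has two issues worth flagging.

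First, a minor correction: the source term $F$ is not purely quadratic in curvature. Since $B_{ij}$ contains the term $A_{kl}W_{iklj}$, the product $A\cdot B$ has a cubic piece $\operatorname{Rm}*\operatorname{Rm}*\operatorname{Rm}$. In the paper's notation \eqref{eq p elliptic schema version} one has $F = B_2^2(\operatorname{Rm}) + B_3^0(\operatorname{Rm})$, not $B_2^{\leq 2}$. This is easy to accommodate via \eqref{eq interpol m+1 power}, but your four-factor bookkeeping must become six-factor for those terms.

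Second, and more substantive, the sentence ``standard Hilbert-space elliptic regularity \dots\ yields \dots\ with $C$ depending on $n, a, s_0, \|\operatorname{Rm}\|_\infty, \|p\|_\infty$'' is where the real work hides. The textbook $W^{a,2}$ estimate for $(n-1)\Delta+s_0$ has constants depending on the metric coefficients in $C^{a-2}$, i.e.\ on $\|\nabla^j\operatorname{Rm}\|_\infty$ for $j\leq a-2$, not merely on $\|\operatorname{Rm}\|_\infty$. To get the stated dependence you cannot treat the regularity as a black box: you must write out the commutator $[\nabla^{a-1},\Delta]p = \sum_{b=1}^{a-1}\nabla^{a-b-1}\operatorname{Rm}*\nabla^b p$ and estimate the resulting mixed integrals $\int \nabla^{a-b-1}\operatorname{Rm}*\nabla^b p * \nabla^{a-1}p\,d\mu$ by H\"older followed by \eqref{eq Ham82 12.6} applied separately to the $\operatorname{Rm}$ and $p$ factors, then \eqref{eq interpolation key}. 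It is precisely this step that produces the $\|p\|_\infty$ dependence --- not, as you suggest, the absorption of ``lower-order $p$-terms''.

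The paper carries this out directly: it multiplies \eqref{eq nabla a pf p to be} by $\nabla^{a-1}p$, integrates by parts to obtain an identity for $(n-1)\int|\nabla^a p|^2$, and then estimates each of the four resulting integrals explicitly (the $s_0$ term, the $B_2^{a+1}$ and $B_3^{a-1}$ terms, and the mixed commutator terms). Once you unpack your elliptic step properly you will be doing the same computation; the remaining interpolation you describe matches the paper's.
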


\begin{proof}
Using Lemma \ref{eq div B is fourth order} we may rewrite schematically the
 second equation in (\ref{eq cbf eq p ellipt}) as
\begin{equation} \label{eq p elliptic schema version}
    \left ( -(n-1)\Delta - s_0 \right ) p = B^2_2(\operatorname{Rm}) + B_3^0( 
    \operatorname{Rm}),
\end{equation}
and hence
\begin{equation} \label{eq nabla a pf p to be}
 \left( -(n-1)\Delta - s_0 \right ) \nabla^{a-1} p = B_2^{a+1}(\operatorname{Rm}) + B_3^{a-1}
(\operatorname{Rm})+\sum_{b=1}^{a-1} \nabla^{a-b-1} \operatorname{Rm} * \nabla^b p.
\end{equation}
Multiplying the above equation by $ \nabla^{a-1} p$ and using integration by parts we get
\begin{align}
(n-1)\int_M |\nabla^a p|^2 d\mu\notag = &  s_0 \int_M |\nabla^{a-1}p|^2 d\mu
 + \int_M  B_2^{a+1}(\operatorname{Rm}) * \nabla^{a-1}p d\mu \notag \\
& + \int_M B_3^{a-1}(\operatorname{Rm})  * \nabla^{a-1}p d \mu  +
\sum_{b=1}^{a-1}  \int_M \nabla^{a-b-1} \operatorname{Rm} *\nabla^b p  *
 \nabla^{a-1}p d \mu .  \label{eq est nabla k p L2}
\end{align}
Below we estimate each terms in the right-hand-side of the above equality.

Note that by (\ref{eq interpolation key}) we have
\[
\left |s_0 \int_M |\nabla^{a-1}p|^2 d\mu \right | \leq \epsilon \int_M |\nabla^{a}p|^2
 d \mu + C|s_0|^{a}  \int_M p^2 d \mu.
\]
The next two terms can be estimated by using H\"{o}lder inequality, (\ref{eq Ham82 12.6}),
and (\ref{eq interpolation key}),

\begin{align*} 
& \left | \int_M  B_2^{a+1}(\operatorname{Rm}) * \nabla^{a-1}p d\mu\right |  
+ \left | \int_M B_3^{a-1}(\operatorname{Rm})  * \nabla^{a-1}p d \mu \right |  \\
\leq~ & \epsilon \int_M |\nabla^{a+2}\operatorname{Rm}|^2 d \mu + C( \left \| 
\operatorname{Rm} \right \|_{\infty}^{2(a+2)}  + \left\|  \operatorname{Rm}
 \right \|_{\infty}^{\frac{4(a+2)}{3}} )\int_M \left | \operatorname{Rm} \right |^{2} d \mu  \\
 & +\epsilon \int_M |\nabla^{a}p|^2 d \mu + C \int_M p^2 d \mu.
\end{align*}

\vskip .1cm
The term $\int_M \nabla^{a-b-1} \operatorname{Rm} \ast \nabla^b p  \ast \nabla^{a-1}p 
d \mu$ can be estimated by using H\"{o}lder inequality, (\ref{eq Ham82 12.6}), and 
Young's inequality,
\begin{align*}
& \left | \int_M \nabla^{a-b-1} \operatorname{Rm} \ast \nabla^b p  \ast \nabla^{a-1}p
 d \mu \right | \\\leq & \left ( \int_M \left | \nabla^{a-b-1} \operatorname{Rm}  
 \right |^{\frac{2(a-1)}{a-b-1}} d \mu \right )^{\frac{a-b-1}{2(a-1)}} \cdot \left ( 
 \int_M \left | \nabla^b p \right |^{\frac{2(a-1)}{b}} d \mu \right )^{\frac{b}{2(a-1)}}  
\cdot \left ( \int_M \left | \nabla^{a-1}p \right |^{2} d \mu \right )^{\frac{1}{2}} \\
\leq & C \left \|  \operatorname{Rm}  \right \|_{\infty}^{\frac{2b}{a-b-1}} \int_M 
\left | \nabla^{a-1}\operatorname{Rm} \right |^{2} d \mu + C\| p\|_{\infty}^{
\frac{2(a-b-1)}{a+b-1}} \int_M \left | \nabla^{a-1}p \right |^2 d\mu  \\
\leq & \epsilon \int_M|\nabla^{a+2}\operatorname{Rm}|^2 d \mu + C \left \|
 \operatorname{Rm} \right\|_{\infty}^{\frac{2b(a+2)}{3(a-b-1)}}\int_M \left| 
 \operatorname{Rm} \right |^{2} d \mu \\
 & + \epsilon \int_M |\nabla^{a}p|^2 d \mu
  + C \| p\|_{\infty}^{\frac{2a(a-b-1)}{a+b-1}} \int_M p^2 d \mu,
\end{align*}
where we have used (\ref{eq interpolation key}) to get the last inequality.
Hence the lemma is proved.
\end{proof}

\vskip .1cm
Next we  bound $\int_M p^2 d \mu$. 

\begin{lemma} \label{lem est for p L2 by Rm}
Let $(g(t), p(t))$ be a smooth solution of CBF (\ref{eq cbf eq p ellipt}) on closed
 manifold $M^n$ with constant scalar curvature $s_0$. Assume that for each $t$ 
 operator $(n-1) \Delta_{g(t)} + s_0$ is invertible, then for any $\epsilon \in (0,1)$
  there exists a positive constant $C$ depending on $n$ and $\epsilon$ such that 
  for each time $t$ the pressure function satisfies
\[
\|p (t) \|_{2}^2 \leq \epsilon \int_M \left | \nabla^{3} \operatorname{Rm}\right |^2
 d\mu + C \left ( \left \| \operatorname{Rm}  (\cdot, t) \right \|_\infty^4 +\left \| \operatorname{Rm}
  (\cdot, t) \right \|_\infty^6  \right )  \| \operatorname{Rm} \|_2^2.
\]
\end{lemma}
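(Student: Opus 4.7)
\medskip
\noindent\textbf{Proof plan.} The plan is to combine the $L^2$ invertibility of the operator $L \doteqdot (n-1)\Delta + s_0$ with the schematic elliptic equation \eqref{eq p elliptic schema version} and then use the interpolation inequality \eqref{eq interpol m+1 power} (in particular its consequence \eqref{eq interpolation key}) to trade second derivatives of $\operatorname{Rm}$ for third derivatives at a small cost. The constant $C$ will pick up a factor coming from the $L^2 \to L^2$ operator norm of $L^{-1}$, which on a closed manifold is finite whenever $L$ is invertible (by the spectral theorem applied to the self-adjoint operator $L$, one has $\|u\|_2 \le |\lambda_1|^{-1}\|Lu\|_2$ where $\lambda_1$ is the eigenvalue of $L$ nearest zero).

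First, I would start from \eqref{eq p elliptic schema version}, which reads
\begin{equation*}
-Lp = B_2^2(\operatorname{Rm}) + B_3^0(\operatorname{Rm}),
\end{equation*}
and pair it with itself to obtain the $L^2$-estimate
\begin{equation*}
\|p\|_2^2 \;\le\; C\,\|Lp\|_2^2 \;\le\; C\int_M \bigl(|B_2^2(\operatorname{Rm})|^2 + |B_3^0(\operatorname{Rm})|^2\bigr)\, d\mu.
\end{equation*}
Unpacking the schematic notation \eqref{eq Bs m def notation}, the integrand is pointwise bounded by a linear combination of $|\operatorname{Rm}|^2|\nabla^2\operatorname{Rm}|^2$, $|\nabla\operatorname{Rm}|^4$, and $|\operatorname{Rm}|^6$. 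Pulling out $\|\operatorname{Rm}\|_\infty$ wherever possible yields
\begin{equation*}
\|Lp\|_2^2 \;\le\; C\,\|\operatorname{Rm}\|_\infty^2 \int_M |\nabla^2\operatorname{Rm}|^2\, d\mu + C\int_M |\nabla\operatorname{Rm}|^4\, d\mu + C\,\|\operatorname{Rm}\|_\infty^4\int_M |\operatorname{Rm}|^2\, d\mu.
\end{equation*}

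Next, I would apply the interpolation inequality \eqref{eq interpol m+1 power} (and its $\|\cdot\|_\infty$-free consequence \eqref{eq interpolation key}) with $a=2$ and $W=\operatorname{Rm}$ to the first two terms. The term $\int|\nabla^2\operatorname{Rm}|^2\, d\mu$ corresponds to $r=2$, so it is bounded by $\epsilon'\int|\nabla^3\operatorname{Rm}|^2\,d\mu + C\int|\operatorname{Rm}|^2\,d\mu$; while $\int|\nabla\operatorname{Rm}|^4\, d\mu$ corresponds to $r=4$ and $i_1=\cdots=i_4=1$, giving a bound of $\epsilon'\int|\nabla^3\operatorname{Rm}|^2\,d\mu + C\|\operatorname{Rm}\|_\infty^{6}\int|\operatorname{Rm}|^2\,d\mu$. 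Absorbing $\|\operatorname{Rm}\|_\infty^2$ into $\epsilon$ (by choosing $\epsilon'$ sufficiently small) and collecting terms produces the stated inequality with the $\|\operatorname{Rm}\|_\infty^4 + \|\operatorname{Rm}\|_\infty^6$ multiplier on $\|\operatorname{Rm}\|_2^2$.

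The only genuine obstacle, and the reason the invertibility hypothesis on $L$ is needed, is the first step: without invertibility there is no way to pass from $\|Lp\|_2$ back to $\|p\|_2$. Everything afterward is a mechanical application of $L^\infty$-H\"older bounds and the interpolation inequalities already recorded in the excerpt, so I expect the remainder to be routine bookkeeping.
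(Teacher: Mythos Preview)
Your approach is essentially the same as the paper's: invert $L$ in $L^2$, bound $\|Lp\|_2^2$ by curvature quantities via the schematic equation \eqref{eq p elliptic schema version}, then interpolate with \eqref{eq interpolation key}. The paper is terser---it silently absorbs $\int|\nabla\operatorname{Rm}|^4$ into $\|\operatorname{Rm}\|_\infty^2\int|\nabla^2\operatorname{Rm}|^2$ (via \eqref{eq interpol m power de}) rather than treating it separately---and it applies \eqref{eq interpolation key} directly with $A=\|\operatorname{Rm}\|_\infty^2$, which avoids your ``absorb $\|\operatorname{Rm}\|_\infty^2$ into $\epsilon$'' step and makes the $\|\operatorname{Rm}\|_\infty^6$ factor appear as $A^3$ automatically. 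Your observation that the constant also depends on $\|L^{-1}\|_{L^2\to L^2}$ is correct and is an omission in the lemma's stated dependence.
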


\begin{proof}
Since the operator $(n-1)\Delta +s_0$ is invertible, by equation
 (\ref {eq p elliptic schema version}) and the standard elliptic theory we have the following
\begin{align*}
\|p (t) \|_{2}^2 \leq &  C  \left ( \|  B^2_2(\operatorname{Rm}) \|_{2}^2 + 
\| B_3^0( \operatorname{Rm}) \|_{2}^2  \right ) \\
\leq & C \left( \left \| \operatorname{Rm}  (\cdot, t) \right \|_\infty^2 \int_M \left | \nabla^2
 \operatorname{Rm} \right |^2 d\mu+ \left \| \operatorname{Rm}  (\cdot, t) \right \|_\infty^4  
 \int_M \left | \operatorname{Rm}\right |^2 d\mu  \right ).
\end{align*}
The lemma now follows from applying (\ref{eq interpolation key}) to 
$\left \| \operatorname{Rm} (\cdot, t) \right \|_\infty^2 \int_M \left | \nabla^2 \operatorname{Rm}
 \right |^2 d\mu$.
\end{proof}

\begin{remark}
Note that this is the only place where we use the assumption that $(n-1) \Delta + s_0$ 
is invertible in the proof of Theorem \ref{thm int Shi curvature derivative est cbf}. 
We may replace the assumption by a upper bound of $\|p (t) \|_{2}$. 
As a consequence we may replace condition (ii) in Theorem
 \ref{thm when extension CBF possible} below by such a condition, and condition (ii) in 
 Theorem \ref{thm compactness sequence cbf} below by condition $\|p_i (t) \|_{2} 
 \leq K$ for all $i$ and $t$.
\end{remark}

\subsection{Differential inequality about $ \frac{d}{dt} \int_M \left | \nabla^a 
\operatorname{Rm} \right |^2 d \mu$} We have

\begin{proposition} \label{prop curv derv L2 evol ineq}
Let $(g(t), p(t))$ be a smooth solution of CBF (\ref{eq cbf eq p ellipt}) on closed
 manifold $M^n$ with constant scalar curvature $s_0$. 
 Then for any $\epsilon \in (0,1)$ and any $a \in \mathbb{N}$ there 
 exist constants $C_1$ and $C_2$ depending
  on $n, \epsilon, a, s_0, \, \| \operatorname{Rm} (\cdot, t) \|_{\infty}$, and $\| p (t) \|_{\infty}$ such that 
\begin{align} 
\frac{d}{dt} \int_M |\nabla^a \operatorname{Rm}|^2 d \mu  + (2- \epsilon) \int_M | 
\nabla^{a+2} \operatorname{Rm}|^2 d \mu \leq   C_1 \| 
\operatorname{Rm} \|_2^2 +C_2\|p \|_2^2. \label{eq L2 estimate new}
\end{align}
When $a=0$ we have 
\begin{equation} 
\frac{d}{dt} \int_M |\operatorname{Rm}|^2 d \mu + 2\int_M | \nabla^{2}
 \operatorname{Rm}|^2 d \mu \leq  C \left (1+  \| \operatorname{Rm} (\cdot, t) \|_{\infty}^2 
  +\|p (t) \|_{\infty} \right ) \cdot \| \operatorname{Rm} \|_2^2,  \label{eq rm no deri evol L2}
\end{equation}
where $C$ is a constant depending only on $n$.
\end{proposition}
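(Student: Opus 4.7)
\noindent\textbf{Proof proposal for Proposition \ref{prop curv derv L2 evol ineq}.}
The plan is a standard energy computation: differentiate $\int_M |\nabla^a \operatorname{Rm}|^2 d\mu$ in time, use the evolution equation \eqref{eq evol nabla rm cbf} together with integration by parts to extract the good fourth-order term $-2\int_M|\Delta \nabla^a\operatorname{Rm}|^2 d\mu$, and then absorb all of the remaining terms into $\epsilon \int_M |\nabla^{a+2}\operatorname{Rm}|^2 d\mu$ plus $\|\operatorname{Rm}\|_2^2$ and $\|p\|_2^2$ via the interpolation inequality \eqref{eq interpol m+1 power} and Lemma \ref{lem interpolation for p deriv via rm}. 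I would first record that $\partial_t d\mu = \tfrac{1}{2}\operatorname{tr}_g(\partial_t g) d\mu = n(n-2)p\, d\mu$ since $B$ is trace-free, and that $\partial_t|\nabla^a\operatorname{Rm}|_g^2 = 2\langle\partial_t \nabla^a\operatorname{Rm},\nabla^a\operatorname{Rm}\rangle_g + (\partial_tg)\ast\nabla^a\operatorname{Rm}\ast\nabla^a\operatorname{Rm}$, so the metric-variation contribution is schematically $(|\operatorname{Rm}|+|p|)|\nabla^a\operatorname{Rm}|^2$, which is harmless.

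The principal step is to plug \eqref{eq evol nabla rm cbf} in, integrate by parts twice on the $-\Delta^2$ piece to produce $-2\int_M|\Delta \nabla^a\operatorname{Rm}|^2\,d\mu$, and then convert this to $-2\int_M|\nabla^{a+2}\operatorname{Rm}|^2\,d\mu$ modulo curvature commutator terms of the schematic form $\int_M B_3^{2a}(\operatorname{Rm})\,d\mu$ via Bochner-type identities; the interpolation \eqref{eq interpol m+1 power} applied to $W=\operatorname{Rm}$ bounds these commutator terms by $\epsilon\int_M|\nabla^{a+2}\operatorname{Rm}|^2d\mu + C\|\operatorname{Rm}\|_\infty^{\star}\|\operatorname{Rm}\|_2^2$. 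The nonlinear curvature terms $\int B_2^{2+a}(\operatorname{Rm})\ast \nabla^a\operatorname{Rm}\,d\mu$ and $\int B_3^a(\operatorname{Rm})\ast\nabla^a\operatorname{Rm}\,d\mu$ are of the form $\int \nabla^{i_1}\operatorname{Rm}\ast\cdots\ast\nabla^{i_r}\operatorname{Rm}\,d\mu$ with total order $2(a+2)$ and $r=3,4$ respectively, so \eqref{eq interpol m+1 power} (with $a$ there equal to $a+1$ here) gives exactly the desired bound $\epsilon\int|\nabla^{a+2}\operatorname{Rm}|^2 + C\|\operatorname{Rm}\|_\infty^{\star}\|\operatorname{Rm}\|_2^2$.

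The main obstacle is the pressure-dependent terms $2(n-2)p\nabla^a\operatorname{Rm}$, $\sum_{b=1}^a\nabla^b p\ast \nabla^{a-b}\operatorname{Rm}$, and $(n-2)\nabla^a(T(\nabla^2 p))$. The first contributes $C\|p\|_\infty\|\nabla^a\operatorname{Rm}\|_2^2$ which we further interpolate to $\epsilon\int|\nabla^{a+2}\operatorname{Rm}|^2+C\|\operatorname{Rm}\|_2^2$ using \eqref{eq interpolation key}. For the middle sum and for the Hessian-type tensor $\nabla^a T(\nabla^2 p)$, after one or two integration-by-parts the integrals reduce to products of $\nabla^j p$ (with $j\le a+2$) against $\nabla^k\operatorname{Rm}$ (with $k\le a+2$); using Cauchy--Schwarz plus $W^{j,2}$-estimates we invoke Lemma \ref{lem interpolation for p deriv via rm} to trade $\int|\nabla^j p|^2$ for $\epsilon'\int|\nabla^{a+2}\operatorname{Rm}|^2+C(\|\operatorname{Rm}\|_2^2+\|p\|_2^2)$, so every such contribution can be absorbed. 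Choosing the various $\epsilon,\epsilon'$ sufficiently small so that the coefficient in front of $\int|\nabla^{a+2}\operatorname{Rm}|^2$ remains $\le \epsilon$ produces \eqref{eq L2 estimate new}.

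The $a=0$ case is handled directly and more cleanly: the sum $\sum_{b=1}^a$ is absent, and the term $(n-2)T(\nabla^2 p)\ast\operatorname{Rm}$ becomes, after integration by parts, $(n-2)\nabla^2\operatorname{Rm}\ast p\ast g$ (modulo lower order) which is $\le \tfrac{1}{2}\int|\nabla^2\operatorname{Rm}|^2+C\|p\|_\infty^2\|\operatorname{Rm}\|_2^2$ by Cauchy--Schwarz, so no appeal to Lemma \ref{lem interpolation for p deriv via rm} is needed and the explicit coefficient $2$ on $\int|\nabla^2\operatorname{Rm}|^2$ is preserved; the curvature nonlinearities $B_2^2+B_3^0$ are likewise bounded by $\tfrac{1}{2}\int|\nabla^2\operatorname{Rm}|^2 + C(1+\|\operatorname{Rm}\|_\infty^2)\|\operatorname{Rm}\|_2^2$ via \eqref{eq interpol m+1 power}, yielding \eqref{eq rm no deri evol L2}.
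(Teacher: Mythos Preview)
Your approach is essentially the paper's: differentiate, insert \eqref{eq evol nabla rm cbf}, integrate the bi-Laplacian by parts to produce $-2\int|\nabla^{a+2}\operatorname{Rm}|^2$ (up to $B_3^{2a+2}$ commutators), bound $B_3^{2+2a}$ and $B_4^{2a}$ by \eqref{eq interpol m+1 power}, and handle the pressure pieces by H\"older/Cauchy--Schwarz followed by Lemma~\ref{lem interpolation for p deriv via rm}. The paper does the mixed term $\int\nabla^a\operatorname{Rm}\ast\nabla^b p\ast\nabla^{a-b}\operatorname{Rm}$ slightly more explicitly (triple H\"older with exponents $2,\tfrac{2a}{a-b},\tfrac{2a}{b}$, then \eqref{eq Ham82 12.6} and Young), and for $\int\nabla^a\operatorname{Rm}\ast\nabla^{a+2}p$ it first integrates by parts twice to $\int\nabla^{a+2}\operatorname{Rm}\ast\nabla^a p$, then Cauchy--Schwarz, then \eqref{eq pressure deriv estimate A}; this is exactly what your sketch intends, but note that you must reduce the $p$-derivative order to at most $a$ before invoking Lemma~\ref{lem interpolation for p deriv via rm} (otherwise the lemma would output $\int|\nabla^{a+4}\operatorname{Rm}|^2$).

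There is a genuine slip in your $a=0$ paragraph. You bound $\int\langle T(\nabla^2 p),\operatorname{Rm}\rangle$ after integration by parts by $\tfrac12\int|\nabla^2\operatorname{Rm}|^2+C\|p\|_\infty^2\|\operatorname{Rm}\|_2^2$ and simultaneously assert that ``the explicit coefficient $2$ on $\int|\nabla^2\operatorname{Rm}|^2$ is preserved''; these are incompatible, since the $\tfrac12$ must be absorbed. The clean way (and presumably what the paper's ``careful check'' has in mind) is to observe that for $a=0$ this term is \emph{identically zero}: from the definition of $T$,
\[
\int_M\langle T(\nabla^2 p),\operatorname{Rm}\rangle\,d\mu \;=\; c\int_M R^{ij}\nabla_i\nabla_j p\,d\mu \;=\; -c\int_M (\nabla_i R^{ij})\nabla_j p\,d\mu \;=\; -\tfrac{c}{2}\int_M\nabla^j S\,\nabla_j p\,d\mu \;=\;0,
\]
since $S\equiv s_0$. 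With this the only nontrivial remainder at $a=0$ is $\int B_3^2(\operatorname{Rm})$, which after one integration by parts is $C\|\operatorname{Rm}\|_\infty\|\nabla\operatorname{Rm}\|_2^2$; this still costs an $\epsilon\int|\nabla^2\operatorname{Rm}|^2$ via interpolation, so strictly speaking one obtains $(2-\epsilon)$ rather than the stated $2$---but this is harmless for the application in the proof of Theorem~\ref{thm int Shi curvature derivative est cbf}, where only the coefficient $1$ is used.
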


\begin{proof}
From  (\ref{eq evol nabla rm cbf}) we have 
\begin{align}
\frac{d}{dt} \int_M |\nabla^a \operatorname{Rm} |^2 d \mu = & -2 \int_M  \left
| \nabla^{a+2}  \operatorname{Rm}  \right |^2 d \mu + \int_M   B_3^{2+2a}
 (\operatorname{Rm}) d \mu + \int_M B_4^{2a} (\operatorname{Rm})   d \mu \notag \\
& + \int_M  \nabla^a \operatorname{Rm} * \nabla^a (T(\nabla^{2}p)) d \mu + 
\sum_{b =0}^{a}  \int_M B_2^{2a-b}(\operatorname{Rm})*\nabla^b p d \mu,
 \label{eq wejust added it}
\end{align}
where we have used the following integration by parts to get the first term
 on the right-hand-side above
\begin{align*}
\int_M \langle\nabla^a \operatorname{Rm} ,  \Delta^2 (\nabla^a  \operatorname{Rm})
 \rangle d \mu=\int_M \left | \nabla^{a+2}  \operatorname{Rm}  \right |^2 d \mu 
 +\int_M  B_3^{2+2a} (\operatorname{Rm}) d \mu . 
\end{align*}

For the terms behind the summation sign in (\ref{eq wejust added it})
 and for $1 \leq b \leq a-1$ we use H\"{o}lder inequality and
 (\ref{eq Ham82 12.6}) to get
\begin{align*}
&\left | \int_M \nabla^a\operatorname{Rm}*\nabla^b p * \nabla^{a-b}
 \operatorname{Rm} d \mu  \right |\\
\leq & \left ( \int_M  \left | \nabla^a \operatorname{Rm} \right |^2d \mu 
\right )^{\frac{1}{2}} \cdot \left ( \int_M \left | \nabla^{a-b}\operatorname{Rm} 
\right |^{\frac{2a}{a-b}}  d \mu  \right )^{\frac{a-b}{2a}} \cdot \left ( \int_M \left | 
\nabla^b p \right |^{\frac{2a}{b}} d \mu \right )^{\frac{b}{2a}} \\
\leq & C  \|\operatorname{Rm} \|_\infty ^{\frac{b}{a}} \left ( \int_M  \left | 
\nabla^a\operatorname{Rm} \right |^2 d \mu \right )^{\frac{2a-b}{2a}} \cdot 
 \| p\|_{\infty}^{\frac{a-b}{a}}\left ( \int_M \left | \nabla^a p\right |^2  d \mu
   \right )^{\frac{b}{2a}} \\
\leq & C \|\operatorname{Rm} \|_\infty ^{\frac{2b}{2a -b}} \cdot\| p
 \|_{\infty}^{\frac{2(a-b)}{2a-b}}   \int_M \left | \nabla^{a} \operatorname{Rm} 
  \right |^2 d \mu +  C \int \left | \nabla^{a}p \right |^2 d \mu \\
\leq & 2 \epsilon \int_M \left | \nabla^{a+2} \operatorname{Rm}  \right |^2 d \mu 
+ C(\epsilon) \|\operatorname{Rm} \|_\infty ^{\frac{b(a+2)}{2a -b}} \cdot  \|
 p\|_{\infty}^{\frac{(a+2)(a-b)}{2a-b}}   \int_M \left |\operatorname{Rm}   
   \right |^2 d \mu  \\
& +C_1(\epsilon, a, \|\operatorname{Rm}  \|_{\infty} ) \int_M \left | 
\operatorname{Rm} \right |^2 d \mu  + C_2(\epsilon, a, s_0,
 \|\operatorname{Rm}\|_{\infty},  \| p\|_{\infty}  ) \int_M p^2 d \mu,
\end{align*}
where we have used Young's inequality  to get the second to last inequality, 
and (\ref{eq interpolation key}) and (\ref{eq pressure deriv estimate A}) to get the last inequality. 
When $b=a$ or $b=0$ the above estimate still holds and the proof is actually simpler.

By using interpolation inequality (\ref{eq interpol m+1 power}) and  integration by parts  we have
\begin{align*}
& \left |\int_M  B_3^{2+2a}
 (\operatorname{Rm}) d \mu \right| +  \left | \int_M B_4^{2a} (\operatorname{Rm}) d \mu \right | 
 \leq   \epsilon \int_M \left | \nabla^{a+2}
  \operatorname{Rm}  \right |^2 d \mu +C(\epsilon)  \|\operatorname{Rm} 
  \|_{\infty}^{a+2} \int_M \left | \operatorname{Rm}  \right |^2 d \mu.
  \end{align*}
By using  integration by parts  and  (\ref{eq pressure deriv estimate A})  
we have 
\begin{align*}
\left | \int_M \nabla^{a} \operatorname{Rm} * \nabla^{a+2} p \, d \mu \right | 
 \leq & \epsilon  \int_M \left | \nabla^{a+2}
 \operatorname{Rm} \right |^2 d \mu +  (4 \epsilon)^{-1} \int \left | \nabla^{a}p \right |^2 d \mu \\
\leq  & \epsilon \int_M \left | \nabla^{a+2} \operatorname{Rm}  \right |^2 d \mu +
C_1(\epsilon, a, \|\operatorname{Rm}\|_{\infty} ) \int_M \left | \operatorname{Rm} \right |^2 d \mu  \\
& +C_2(\epsilon, a,s_0, \|\operatorname{Rm} \|_{\infty},  \| p\|_{\infty} ) \int_M p^2 d \mu.
\end{align*}

Putting together  all calculations above we obtain estimate (\ref{eq L2 estimate new}).
 A careful check of the above calculation will produce the inequality for $a=0$.
\end{proof}

\subsection{Proof of Theorem \ref{thm int Shi curvature derivative est cbf}}
We use the idea in the proof of \cite[Theorem 5.4]{St08}.
From  Proposition \ref{prop curv derv L2 evol ineq} with $\epsilon = \frac{1}{4}$ 
we have the following. For $a \geq 2$
\begin{align*}
\frac{d}{dt} \int_M |\nabla^a \operatorname{Rm}|^2 d \mu + \frac{7}{4} \int_M
 | \nabla^{a+2} \operatorname{Rm}|^2 d \mu  \leq & C_1  \| \operatorname{Rm} 
 \|^2_2 +C_2  \| p  \|^2_2. 
\end{align*}
Let $\bar{T} \doteqdot  \min \{ \frac{\alpha}{K}, T \}$. Below $t \in (0, \bar{T}]$. 
Note that by applying Lemma \ref{lem est for p L2 by Rm} to bound the last term in 
the above inequality and using (\ref{eq interpolation key})
to bound $ \int_M \left | \nabla^{3} \operatorname{Rm}\right |^2
 d\mu $ by $ \frac{1}{4} \int_M | \nabla^{4} \operatorname{Rm}|^2 d \mu$ plus
something else,
 we get
\begin{align}
& \frac{d}{dt} \int_M |\nabla^a \operatorname{Rm}|^2 d \mu  + \frac{3}{2} \int_M 
| \nabla^{a+2} \operatorname{Rm}|^2 d \mu \leq  \frac{1}{2^{a+1}\bar{T}^{a-1}}  
\| \nabla^4\operatorname{Rm}\|^2_2+C_3(a,\bar{T})  \| \operatorname{Rm} \|^2_2 , 
 \label{eq shi est tem 12}
\end{align}
where constant $C_3(a,\bar{T})$ depends on $C_1$ and $C_2$. As a special case but using
different factor in front of $\| \nabla^4\operatorname{Rm}\|^2_2$, we have
 \begin{align}  \label{eq shi est tem 12 a=2}
& \frac{d}{dt} \int_M |\nabla^2 \operatorname{Rm}|^2 d \mu  + \int_M 
| \nabla^{4} \operatorname{Rm}|^2 d \mu \leq -  \frac{1}{2}  
\| \nabla^4\operatorname{Rm}\|^2_2+C_4( \bar{T})  \| \operatorname{Rm} \|^2_2.
\end{align}

 For $ a \geq 1$ we define function 
\begin{equation*}
    f_a(t) \doteqdot \sum_{j=0}^{a} \frac{t^{j}}{j!}  \int_M |\nabla^{2 j}\operatorname{Rm}|^2 d \mu.
\end{equation*}
Using (\ref{eq shi est tem 12}), (\ref{eq shi est tem 12 a=2}), 
and (\ref{eq rm no deri evol L2}) we compute
\begin{align}
\frac{d}{dt} f_a(t) =& \sum_{j=1}^{a} \frac{t^{j}}{j!}\left (  \frac{d}{dt} \int_M
 |\nabla^{2j}\operatorname{Rm}|^2 d \mu + \int_M |\nabla^{2j+2}\operatorname{Rm}|^2 
 d \mu  \right )  \notag   \\
& + \left ( \frac{d}{dt} \int_M | \operatorname{Rm}|^2 d \mu +  \int_M |\nabla^{2}
\operatorname{Rm}|^2 d \mu \right )  - \frac{t^{a}}{a!}  \int_M |\nabla^{2a+2}
\operatorname{Rm}|^2 d \mu \notag  \\
\leq  &  \| \operatorname{Rm} \|^2_2 \sum_{j=2}^{a} \left (\frac{t^{j}}{j!} C_3(2j,
 \bar{T}) \right )+ \|\nabla^4 \operatorname{Rm}\|_2^2  \left(\sum_{j=2}^a \left 
 (\frac{t^{j}}{j!} \cdot \frac{1}{2^{2 j+1}T^{2j-1}}  \right ) -\frac{t}{2}  \right ) \notag   \\
& + C_4(\bar{T} ) \cdot  t  \| 
\operatorname{Rm} \|^2_2 +  C  ( 1+ \| \operatorname{Rm} \|_{\infty}^2 
+\|p\|_{\infty} )  \cdot  \| \operatorname{Rm} \|^2_2  \notag    \\
 \leq &  C(n, s_0, \bar{T}, \| \operatorname{Rm} \|_{\infty}^2 ,
 \|p\|_{\infty} , a)  \cdot  \| \operatorname{Rm} \|^2_2.  \label{eq fk ineq for shi L2 name}
\end{align}
This implies (\ref{eq shi est in thm definite form}) when $m$ is even.

For (\ref{eq shi est in thm definite form}) with odd $m$ we use the following interpolation to get it,
\begin{align*}
\int_M |\nabla^{2a+1} \operatorname{Rm}|^2 d \mu \leq &\left (\int_M |\nabla^{2a} 
\operatorname{Rm}|^2 d \mu \right )^{1/2} \left ( \int_M |\nabla^{2a+2}
 \operatorname{Rm}|^2 d \mu \right  )^{1/2} 
\end{align*}
This finishes the proof of Theorem \ref{thm int Shi curvature derivative est cbf}.

\vskip .2cm
We end this section with  two direct applications of 
Theorem \ref{thm int Shi curvature derivative est cbf}.

\subsection{Characterizing finite time singularities} \label{subsec appl Shi type est singular time}
Recall that for a closed Riemannian manifold $(M^n,g)$ the Sobolev constant $C_S(g)$ 
is the smallest constant $C$ such that for any function $u \in C^1(M)$ we have
\begin{equation}
    \| u \|_{\frac{2n}{n-2}} \leq C \left (\| \nabla u \|_{2} + \operatorname{Vol}_g^{-\frac{1}{n}}
     \| u \|_{2} \right ),
\end{equation}
where $ \| u \|_{p}$ is the $L^p$-norm and  $\operatorname{Vol}_g$ is the volume of $M$. 
The first application of Theorem \ref{thm int Shi curvature derivative est cbf} is  

\begin{theorem} \label{thm when extension CBF possible}
Let $(g(t), p(t)), \, t \in [0, T)$, be a smooth solution of CBF (\ref{eq cbf eq p ellipt}) 
on closed manifold $M^n$ with constant scalar curvature $s_0$. We assume that there
 is a constant $K>0$ such that the curvature of $g(t)$ and  potential function $p(t)$ 
 satisfy the following conditions

\noindent (i) $\sup_{ (x,t) \in M \times  [0, T)} \left (  |\operatorname{Rm}(x, t) |_{g(t)}
+  | p (x,t)|  \right ) \leq K$,

\noindent (ii) the operator norm $\| ( (n-1) \Delta_{g(t)} + s_0)^{-1} \|_{L(C^{\alpha},
C^{2+\alpha})}\leq K$   for  $t \in [0, T)$, and

\noindent (iii) the Sobolev constant $C_S(g(t)) \leq K$  for  $t \in [0, T)$.

\noindent Then $(g(t), p(t))$ can be extended to a smooth solution of CBF on $[0,T+ \delta]$ 
for some $\delta >0$.  
\end{theorem}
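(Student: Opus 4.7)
The plan is to show that $g(t)$ converges smoothly as $t \to T^-$ to a smooth metric $g(T)$ with constant scalar curvature $s_0$ satisfying the hypotheses of Theorem \ref{thm main short time exist cbf}, so that Theorem \ref{thm main cbf better regu} yields a smooth continuation past $T$. The central task is to upgrade the $L^2$ derivative bounds of Theorem \ref{thm int Shi curvature derivative est cbf} to uniform pointwise bounds on $\nabla^m \operatorname{Rm}$ using the Sobolev constant bound (iii).

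First, volume is controlled: since $B$ is trace-free, $\partial_t d\mu_{g(t)} = n(n-2) p\, d\mu_{g(t)}$, so $|p|\leq K$ gives $\operatorname{Vol}(M, g(t)) \leq e^{n(n-2)KT}\operatorname{Vol}(M, g(0))$ for $t \in [0,T)$. Next, fix any $t_0 \in (\max(0, T-\alpha/K), T)$ and apply Theorem \ref{thm int Shi curvature derivative est cbf} with $t_0$ as the initial time (legitimate since the hypotheses there are satisfied on $[t_0, T)$ by (i) and (ii)). This produces, for every $m \in \mathbb{N}$, a constant $C_m$ independent of $t$ such that
\begin{equation*}
\int_M |\nabla_{g(t)}^m \operatorname{Rm}|^2 \, d\mu_{g(t)} \leq C_m \quad \text{for } t \in [t_0 + (T - t_0)/2, \, T).
\end{equation*}
Iterating the Sobolev inequality from (iii), applied to $|\nabla^{m}\operatorname{Rm}|^{q}$ via Kato's inequality in a Moser-type argument, converts these uniform $L^2$ bounds into uniform pointwise bounds $|\nabla_{g(t)}^m \operatorname{Rm}|_{g(t)} \leq C'_m$ on a common interval $[T-\delta', T)$. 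Combining Lemma \ref{lem interpolation for p deriv via rm} (iterated) with these pointwise curvature bounds and the pointwise bound on $p$ from (i) gives uniform $L^2$, and then by the same Sobolev iteration, uniform pointwise bounds on $\nabla_{g(t)}^m p$ for all $m$.

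With all covariant derivatives of $\operatorname{Rm}$ and $p$ uniformly bounded, the evolution equation $\partial_t g = 2(n-2)(B+pg)$ shows that $|\partial_t g|_{g(t)} \leq C$ uniformly, so the metrics $\{g(t)\}_{t \in [T-\delta', T)}$ are uniformly equivalent, and $g(t)$ is Cauchy in $C^k$ for every $k$ on a fixed background metric. Hence there is a smooth limit metric $g(T) := \lim_{t\to T^-}g(t)$, with $S_{g(T)} = s_0$ by continuity. For invertibility of $(n-1)\Delta_{g(T)} + s_0$, hypothesis (ii) gives uniformly bounded inverses of $(n-1)\Delta_{g(t)} + s_0$ on $C^{\alpha} \to C^{2+\alpha}$; since $g(t) \to g(T)$ smoothly, the operators converge in norm $C^{2+\alpha}\to C^{\alpha}$, and a standard Neumann-series perturbation argument shows the limit operator is invertible. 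Theorem \ref{thm main short time exist cbf} applied with initial data $g(T)$ then produces a $C^{4+\alpha,1+\alpha/4}$-extension, which is smooth by Theorem \ref{thm main cbf better regu}; uniqueness glues it to $(g(t), p(t))$ to yield the desired extension on $[0, T+\delta]$.

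The main obstacle is the Moser iteration step that promotes the $L^2$ derivative bounds to pointwise bounds: one must carefully track how the Sobolev constant bound $C_S(g(t)) \leq K$, the uniform equivalence of the evolving metrics (which must be bootstrapped simultaneously), and the interpolation inequality \eqref{eq interpol m+1 power} combine so that the iteration closes. Once this is done, the remaining passage to the limit and the perturbation argument for the elliptic operator are standard.
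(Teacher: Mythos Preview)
Your proposal is correct and follows essentially the same route as the paper: obtain uniform $L^2$ bounds on $\nabla^m\operatorname{Rm}$ from Theorem \ref{thm int Shi curvature derivative est cbf}, upgrade to pointwise bounds using the Sobolev assumption (iii), deduce smooth convergence $g(t)\to g(T)$, and then restart the flow at $g(T)$ via Theorem \ref{thm main short time exist cbf}. The paper is more terse---it outsources the $L^2\to L^\infty$ step to \cite[Theorem 6.2]{St08} and invokes Cheeger--Gromov compactness rather than your explicit Cauchy-in-$C^k$ argument---while you spell out volume control, the Neumann-series invertibility of $(n-1)\Delta_{g(T)}+s_0$, and pointwise control of $\nabla^m p$; these are fine additions but not new ingredients. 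One small remark: the ``bootstrap'' concern you flag about simultaneous metric equivalence is not really an obstacle here, since both the Sobolev inequality (iii) and the $L^2$ bounds from Theorem \ref{thm int Shi curvature derivative est cbf} are already stated with respect to $g(t)$, so the pointwise curvature bounds come first and metric equivalence follows afterward from $|\partial_t g|_{g(t)}\leq C$.
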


\begin{proof} From assumption (i), (ii), and Theorem 
\ref{thm int Shi curvature derivative est cbf} we get uniform bounds on $L^2$-norm of
 any $m$-th derivatives of the curvature, $\| \nabla^{m} \operatorname{Rm}(\cdot, t)
  \|_{L^2}\leq C(m,T,K)$ for $t \in [T/2, T)$. Then using assumption (iii) 
about the Sobolev constant  and arguing
   as in the proof of \cite[Theorem 6.2]{St08} we get uniform $C^k$-norm  bounds 
   of curvatures of $g(t)$ for $t \in [T/2,T)$. 
   
 Note that the uniform $C^4$-norm bounds
    imply the uniform equivalence of metric $g(t), \, t \in [T/2,T)$, with $g(T/2)$,
  hence we have a uniform lower bound of injectivity radius $\operatorname{inj}_{g(t)} 
  \geq \iota > 0$.
 By the Cheeger-Gromov compactness theorem of Riemannan manifolds
      we conclude that $g(t)$ converges smoothly to a metric called $g(T)$ as $t \to T^-$
       (no diffeomorphisms needed). Then we can use assumption (ii) and Theorem
        \ref{thm main short time exist cbf} to extend metric $g(T)$  to a solution 
        $g(t)$ of CBF  for $t \in [T, T +\delta ]$.
\end{proof}

\subsection{Compactness theorem for CBF}  \label{subsec appl Shi type compactness}
 The proof of the following theorem is standard and is also similar to that of 
\cite[Theorem 7.1]{St08}, we omit it.

\begin{theorem} \label{thm compactness sequence cbf}
Let $\{ (g_i(t), p_i(t))\}, \, t \in (\alpha, \omega)$, be a family of smooth solutions of CBF
 (\ref{eq cbf eq p ellipt})  on closed manifolds $M_i^n$ with constant scalar curvature $s_{0i}$,
  where $-\infty \leq \alpha <0 < \omega \leq \infty$. Let $\{q_i \in M_i \}$ be a sequence 
  of points. We assume that there is a constant $K>0$ such that for each $i$ the curvature of $g_i(t)$ 
  and potential function $p_i(t)$ satisfy the following conditions

\noindent (i) $\sup_{ (x,t) \in M_i \times (\alpha, \omega)} \left ( |\operatorname{Rm}_{g_i}(x,t)
 |_{g_i(t)}  + | p_i (x,t)| \right) \leq K$,

\noindent (ii) the operator norm $\| ( (n-1) \Delta_{g_i(t)} + s_{0i})^{-1} \|_{L(C^{\alpha}, 
C^{2+\alpha})}\leq K$   for  $t \in (\alpha, \omega)$,

\noindent (iii) the Sobolev constant  $C_S(g_i(t)) \leq K$ for $ t \in (\alpha, \omega)$, and

\noindent (iv) $\lim_{t \to \alpha} \int_{M_i} |  \operatorname{Rm}_{g_i}(\cdot, t)|_{g_i(t)}^2
 d \mu_{g_i(t)}  \leq K$.

\noindent Then sequence $\{(M_i,g_i(t), p_i(t), q_i) \}$ sub-converges in pointed 
Cheeger-Gromov $C^{\infty}$-topology to a complete solution $(M_{\infty}^n,
 g_{\infty}(t), p_{\infty}(t), q_{\infty}), \, t \in (\alpha, \omega)$ of CBF (\ref{eq cbf eq p ellipt}).  
\end{theorem}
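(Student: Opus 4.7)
The plan is to follow the standard Cheeger--Gromov compactness strategy adapted to fourth-order flows, as in the proof of \cite[Theorem 7.1]{St08}. The goal is to upgrade the hypothesized bounds to uniform $C^k$ control of both $g_i(t)$ and $p_i(t)$ on compact sub-intervals of $(\alpha,\omega)$, then apply a pointed Cheeger--Gromov compactness theorem at a fixed time slice, and finally pass to a limit in $t$ via an Arzel\`a--Ascoli argument.

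First I would combine hypothesis (iv) with Theorem \ref{thm int Shi curvature derivative est cbf} applied on sub-intervals of the form $[\alpha+\epsilon,\omega)$: conditions (i) and (ii) supply exactly the curvature bound, pressure bound, and invertibility needed to run the Shi-type $L^2$-estimate, so for every $m$ and every such sub-interval one obtains $\|\nabla_{g_i(t)}^m \operatorname{Rm}_{g_i}\|_{L^2(M_i)}\leq C(m,\epsilon,K)$. The Sobolev constant bound (iii) then converts these into uniform pointwise bounds $|\nabla^m\operatorname{Rm}_{g_i}|_{g_i(t)}\leq C(m,\epsilon,K)$ by iterating Sobolev embedding on $\nabla^{m+j}\operatorname{Rm}_{g_i}$. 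In parallel, the elliptic equation (\ref{eq p elliptic schema version}) for $p_i$, assumption (ii), and Lemma \ref{lem interpolation for p deriv via rm} yield uniform $C^k$-bounds on each $p_i(t)$ in terms of those of $\operatorname{Rm}_{g_i}$.

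Second, the uniform curvature bound combined with the uniform Sobolev constant produces a uniform positive lower bound on the injectivity radius of $(M_i,g_i(t))$ via Cheeger's theorem. Together with the $C^k$-bounds on the metric in harmonic coordinates, the pointed Cheeger--Gromov $C^\infty$-compactness theorem for Riemannian manifolds yields, after passing to a subsequence, a complete limit $(M_\infty, g_\infty(0), q_\infty)$ at the chosen time $t=0$. To extend convergence to all of $(\alpha,\omega)$, I would use Arzel\`a--Ascoli in the time variable: the CBF equation $\partial_t g_i = 2(n-2)(B(g_i)+p_i g_i)$, together with the uniform spatial $C^k$-bounds, gives uniform bounds on $\partial_t^j g_i$ for every $j$, hence equicontinuity of $g_i(\cdot)$ (and of $p_i(\cdot)$ via the elliptic equation) in time as maps into $C^k$. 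A diagonal argument produces smooth limits $g_\infty(t)$ and $p_\infty(t)$ on $(\alpha,\omega)$.

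Finally one verifies that $(g_\infty(t),p_\infty(t))$ solves CBF (\ref{eq cbf eq p ellipt}): all derivatives of $g_i$ and $p_i$ appearing in the two equations converge uniformly on compact subsets, so the system passes to the limit. The main bookkeeping obstacle is handling $p$, which is not governed by its own evolution equation but is coupled nonlocally through an elliptic equation; here assumption (ii) is essential, since it is what guarantees that $p_i$ and all its derivatives can be recovered from $\operatorname{Rm}_{g_i}$ uniformly in $i$, without which neither the Shi-type bootstrap nor the passage to the limit for the pressure would close.
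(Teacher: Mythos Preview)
Your proposal is correct and matches the paper's intended approach: the paper explicitly omits the proof, stating only that it is standard and similar to \cite[Theorem 7.1]{St08}, which is precisely the strategy you outline. Your bookkeeping of how hypotheses (i)--(iv) feed into Theorem \ref{thm int Shi curvature derivative est cbf}, the Sobolev-to-pointwise upgrade, the elliptic recovery of $p_i$, and the Arzel\`a--Ascoli step in time are all in line with how the paper handles the closely related Theorem \ref{thm when extension CBF possible}.
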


\section{Local integral and pointwise Shi's type estimate} 
\label{sect Improved Shi conf Bach flow}

In this section we prove a local version of the integral Shi's type estimate (Theorem 
\ref{thm improved Shi curvature derivative conseq}). 
At the end we give a proof of  Theorem \ref{thm ptwise Shi est for CBF-intro}.
 The basic ideas of the two proofs are from \cite[Theorem 4.4]{St13}  and  \cite[\S 5]{St13}, 
 respectively.

\subsection{A property about cutoff functions and localized interpolation inequalities}
 \label{subsec cutoff local interpol}

We will need the following when we use cutoff functions to localize later.
 
\begin{lemma} \label{lem cut off funct deriv est}
Let $(M^n, g(t)), \, t \in [0,T]$, be a smooth family of Riemannian manifolds. 
Then there  are constants 
\begin{align*}
C_1=C_1 \left ( \sup_{t \in [0,T]}  | \partial_t g |_{g(t)} ,  T \right ) \,
\text{ and  } \,\, C_2 = C_2 \left (
 \sup_{t \in [0,T]} (  | \partial_t g |_{g(t)} + |\nabla \partial_t g|_{g(t)}), T \right )
\end{align*}
such that  for any function $\eta \in C^{\infty}(M)$ we have pointwise-estimates
\begin{align*}
| \nabla \eta |_{g(t)} \leq C_1  |\nabla \eta |_{g(0)}  \, \text{ and } \,\, |\nabla^2
 \eta |_{g(t)} \leq C_2 \left ( |\nabla \eta |_{g(0)} t+ |\nabla^2 \eta |_{g(0)} \right )
  \, \text{for }  t \in [0, T]. 
\end{align*}
\end{lemma}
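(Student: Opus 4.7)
The plan is to exploit the fact that $\eta$ itself does not depend on $t$; only the metric (hence the inverse metric and the Christoffel symbols) evolves. So both inequalities reduce to controlling how the metric and the Levi--Civita connection change with $t$, which is governed by $\partial_t g$ and $\nabla \partial_t g$.

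For the first inequality, I would compute
\begin{equation*}
\frac{d}{dt}|\nabla \eta|_{g(t)}^{2} = \frac{d}{dt}\bigl(g^{ij}(t)\,\partial_i \eta\,\partial_j\eta\bigr) = -(\partial_t g)(\nabla\eta,\nabla\eta),
\end{equation*}
where the indices on $\partial_t g$ are raised using $g(t)$. This immediately gives
$\bigl|\tfrac{d}{dt}|\nabla\eta|_{g(t)}^{2}\bigr| \leq |\partial_t g|_{g(t)}\,|\nabla\eta|_{g(t)}^{2}$,
and Gronwall's inequality on $[0,T]$ yields $|\nabla\eta|_{g(t)}^{2}\leq e^{CT}|\nabla\eta|_{g(0)}^{2}$ with $C=\sup_{t}|\partial_t g|_{g(t)}$. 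As a byproduct, the metrics $g(t)$ and $g(0)$ are uniformly equivalent, so any $(0,k)$-tensor norm computed with $g(t)$ differs from the one computed with $g(0)$ by a multiplicative constant depending only on $\sup_t|\partial_t g|_{g(t)}$ and $T$.

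For the second inequality, the key point is that $(\nabla^{2}_{g(t)}\eta)_{ij}-(\nabla^{2}_{g(0)}\eta)_{ij}=-A^{k}_{ij}(t)\,\partial_k\eta$, where
\begin{equation*}
A^{k}_{ij}(t)\doteqdot\Gamma^{k}_{ij}(g(t))-\Gamma^{k}_{ij}(g(0))
\end{equation*}
is a tensor. Using the standard identity
\begin{equation*}
\partial_t\Gamma^{k}_{ij}(g(t))=\tfrac{1}{2}g^{kl}\bigl(\nabla_i(\partial_t g)_{jl}+\nabla_j(\partial_t g)_{il}-\nabla_l(\partial_t g)_{ij}\bigr),
\end{equation*}
I would integrate in time to get $|A(t)|_{g(t)}\leq C\cdot t\cdot\sup_{s\in[0,T]}|\nabla\partial_t g|_{g(s)}$, where $C$ also absorbs the metric equivalence from the first step. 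Combining with $|(\nabla^{2}_{g(0)}\eta)|_{g(t)}\leq C\,|\nabla^{2}\eta|_{g(0)}$ and $|d\eta|_{g(t)}\leq C|\nabla\eta|_{g(0)}$, this gives
\begin{equation*}
|\nabla^{2}\eta|_{g(t)}\leq C\bigl(|\nabla^{2}\eta|_{g(0)}+t\,|\nabla\eta|_{g(0)}\bigr),
\end{equation*}
with a constant of the advertised form.

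The steps are essentially bookkeeping with Gronwall and the variation formula for Christoffel symbols, so there is no real obstacle; the only thing to be careful about is consistently distinguishing the norm $|\cdot|_{g(t)}$ from $|\cdot|_{g(0)}$ and tracking how the constants depend on $\sup_{t}|\partial_t g|_{g(t)}$, $\sup_{t}|\nabla\partial_t g|_{g(t)}$, and $T$ (which enters only through the time integration and the exponential from Gronwall).
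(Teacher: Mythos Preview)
Your proposal is correct and follows essentially the same approach as the paper: the first inequality is obtained exactly as in the paper by differentiating $|\nabla\eta|_{g(t)}^{2}$ and applying Gronwall. For the second inequality the paper instead differentiates $|\nabla^{2}\eta|_{g(t)}^{2}$ directly (getting terms $\nabla\partial_t g * \nabla\eta * \nabla^{2}\eta + \partial_t g * \nabla^{2}\eta * \nabla^{2}\eta$) and applies a Gronwall-type inequality, whereas you estimate the Christoffel difference $A(t)$ by integrating $\partial_t\Gamma$ in time and then use the triangle inequality; these are equivalent bookkeepings of the same variation-of-connection argument.
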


\begin{proof} The first estimate follows from
\[
\partial_t |\nabla \eta |^2 =\partial_t g_{ij}(t) \partial_i \eta  \partial_j \eta 
\leq |\partial_t g| \cdot |\nabla \eta|^2.
\] 
For the second estimate we express the Hessian in local coordinates as
\[
(\nabla^2 \eta )_{i j} (t) = \partial_i \partial_j \eta -\Gamma_{ij}^k (t) \partial_k \eta,
\]
and we compute the derivative
\begin{align*}
\partial_t |\nabla^2 \eta |^2= & \nabla \partial_t g * \nabla \eta *\nabla^2 \eta 
+ \partial_t g * \nabla^2 \eta  * \nabla^2 \eta    \\
\leq & C(n)  \sup_{t \in [0,T]}  |\nabla \partial_t g | \cdot \sup_{t \in [0,T]}| \nabla \eta
 | \cdot  | \nabla^2 \eta | + \sup_{t \in [0,T]} | \partial_t g | \cdot  |
  \nabla^2 \eta |^2.
\end{align*}
Using the first estimate and applying the Gronwall-type inequality we get the second estimate.
\end{proof}

\vskip .1cm
The following local interpolation inequalities are simple consequence from \cite[\S 5]{KS02}
 (see also \cite[\S 10]{St08}) which are analog of  (\ref{eq interpolation key}) and 
 (\ref{eq interpol m+1 power}).

\begin{lemma} \label{cor weight interpol ineq}
Let $(M^n,g)$ be a Riemannian manifold and let $\eta$ be a $C^1$ function
which satisfies $0 \leq \eta \leq 1$ and $|\nabla \eta | \leq \Lambda$. 
We assume that set $\{ x \in M, \, \eta(x) >0 \}$ is precompact in $M$. 
Let $a \in \mathbb{N}$ and let $W$ be any $C^{a+1}$-tensor of type $(r_1,r_2)$. 
Then 

\noindent (i) For $i=1, 2, \cdots, a$, and constants $A>0, \, \epsilon >0, \, s \geq 2a$, 
we have

\begin{equation} \label{eq interpolation key with eta}
A \int_M |\nabla^i W|^2 \eta^s d \mu \leq \epsilon \int_M |\nabla^{a+1} W|^2 
\eta^{s+2a+2 -2i} d \mu + C A^{\frac{a+1}{a+1-i}}  \|W\|^2_{2, \eta >0} ,  
\end{equation}
where constant $C =C(n,a, r_1,r_2, s, \Lambda, \epsilon)$. 

\noindent (ii) For $0 \leq i_1, \cdots, i_r \leq a \in \mathbb{N}$ with $i_1 +
 \cdots + i_r = 2a$, and   constants  $\epsilon >0, s \geq 2a$,  we have

\begin{equation} \label{eq interpol m only power m+1 power}
\left | \int_M \nabla^{i_1} W * \cdots * \nabla^{i_r}W \eta^s d \mu_g \right |
 \leq  \epsilon \int_M | \nabla^{a+1} W|^2 \eta^{s+2} d \mu + C \| 
 W \|_{\infty}^{(a+1)(r-2)} \| W\|^{2}_{2,\eta >0}, 
\end{equation}
where constant $C=C(n, a, r, r_1, r_2, s, \Lambda, \epsilon)$. 
\end{lemma}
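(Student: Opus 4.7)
The plan is to derive the weighted estimates from their unweighted counterparts (\ref{eq interpolation key}) and (\ref{eq interpol m+1 power}) by the Kuwert-Sch\"atzle integration-by-parts technique, following the blueprint of \cite[\S 5]{KS02} and \cite[\S 10]{St08}; the essentially new content is bookkeeping the $\eta$-exponent.

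For part (i), the core step is a weighted Gagliardo-Nirenberg-type reduction. Integration by parts gives, for any $s' \geq 2$,
\[
\int_M |\nabla^i W|^2 \eta^{s'} d\mu = -\int_M \langle \nabla^{i-1}W, \nabla^{i+1}W \rangle \eta^{s'} d\mu - s' \int_M \langle \nabla^{i-1}W, \nabla^i W \otimes \nabla \eta \rangle \eta^{s'-1} d\mu.
\]
Applying Cauchy-Schwarz together with $|\nabla \eta| \leq \Lambda$ and absorbing via Young's inequality yields
\[
\int_M |\nabla^i W|^2 \eta^{s'} d\mu \leq \delta \int_M |\nabla^{i+1}W|^2 \eta^{s'+2} d\mu + C(\delta, \Lambda) \int_M |\nabla^{i-1}W|^2 \eta^{s'-2} d\mu.
\]
Iterating upward to push the high derivative to $\nabla^{a+1}W$ and downward to bring the low derivative to $W$ shifts the $\eta$-exponent by $\pm 2$ at each step; the hypothesis $s \geq 2a$ is exactly what keeps every intermediate exponent nonnegative. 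This produces the multiplicative interpolation
\[
\int_M |\nabla^i W|^2 \eta^s d\mu \leq C \left( \int_M |\nabla^{a+1}W|^2 \eta^{s+2a+2-2i} d\mu \right)^{\frac{i}{a+1}} \left( \|W\|^2_{2, \eta>0} \right)^{\frac{a+1-i}{a+1}},
\]
and Young's inequality with conjugate exponents $\tfrac{a+1}{i}$ and $\tfrac{a+1}{a+1-i}$ converts this into the stated additive form with the precise $A^{(a+1)/(a+1-i)}$ dependence.

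For part (ii), the template of the unweighted proof in the excerpt carries over. Order the indices so that $i_1, \ldots, i_l \geq 1$ and $i_{l+1} = \cdots = i_r = 0$, split the weight as $\eta^s = \prod_{j=1}^l \eta^{s i_j/(2a)}$, and apply H\"older's inequality with exponents $2a/i_j$:
\[
\left| \int_M \nabla^{i_1}W * \cdots * \nabla^{i_r}W \, \eta^s d\mu \right| \leq \|W\|_\infty^{r-l} \prod_{j=1}^l \left( \int_M |\nabla^{i_j}W|^{2a/i_j} \eta^{s i_j/a} d\mu \right)^{i_j/(2a)}.
\]
A weighted analog of \cite[Cor.\ 12.6]{Ha82}, established by the same integration-by-parts argument used in part (i), bounds each factor by $\|W\|_\infty^{1 - i_j/a} \bigl( \int_M |\nabla^a W|^2 \eta^{s'} d\mu \bigr)^{i_j/(2a)}$ for an appropriate $s' \geq s$. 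Multiplying the bounds reduces the original integral to $C \|W\|_\infty^{r-2} \int_M |\nabla^a W|^2 \eta^{s'} d\mu$, and then applying part (i) with $i = a$ and $A = C \|W\|_\infty^{r-2}$ produces the stated split $\epsilon \int_M |\nabla^{a+1}W|^2 \eta^{s+2} d\mu$ plus $C \|W\|_\infty^{(a+1)(r-2)} \|W\|^2_{2, \eta>0}$.

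The main obstacle is the $\eta$-exponent arithmetic: confirming the exact factor $s + 2a + 2 - 2i$ in part (i) and $s + 2$ in part (ii) requires carefully accounting for how many integration-by-parts steps are used upward versus downward and how derivatives falling on $\eta$ accumulate. Once this bookkeeping is in place, the dependence of constants on $\Lambda$ is immediate from $|\nabla \eta| \leq \Lambda$, and the proof is essentially a transcription of the standard Kuwert-Sch\"atzle calculation.
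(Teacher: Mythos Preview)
Your proposal is correct and follows exactly the route the paper indicates: the paper does not give an independent proof of this lemma but simply declares it a ``simple consequence from \cite[\S 5]{KS02} (see also \cite[\S 10]{St08})'' analogous to the unweighted inequalities (\ref{eq interpolation key}) and (\ref{eq interpol m+1 power}). Your write-up is in fact more detailed than anything in the paper itself, and the integration-by-parts reduction, the iteration with $\eta$-exponent shifts of $\pm 2$, and the final Young's inequality step are precisely the Kuwert--Sch\"atzle mechanism the authors are invoking.
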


\subsection{Localized integral version of Shi's type estimate} 
\label{subsec Localized integral version of Shi}

Let $(g(t), p(t))_{ t \in [0, T)}$ 
be a local smooth solution of CBF (\ref{eq cbf eq p ellipt}) on manifold $M^n$ 
with constant scalar curvature $s_0$, and let $\eta$ be a $C^1$ function
which satisfies $0 \leq \eta \leq 1$ and $|\nabla \eta |_{g(t)} \leq \Lambda$. 
In this subsectione we further assume that $\{ x \in M, \, \eta(x) >0 \}$ is precompact in $M$. 
The logic steps of the proof of Theorem \ref{thm improved Shi curvature derivative conseq}
 is similar to that in \S \ref{sect Shi conformal Bach f}.

\vskip .1cm
First we prove a localized version of (\ref{eq pressure deriv estimate A}).

\begin{lemma} \label{lem 4.6 localized}
For any $\epsilon \in (0, 1), \, a \in \mathbb{N}$, and $s \geq 2a$ we have
 for each time $t$
\begin{equation} \label{eq pressure deriv estimate eta s ver}
\int_M \left |  \nabla^a p (t) \right |^2 \eta^s d \mu \leq  \epsilon \int_M |\nabla^{a+2}
 \operatorname{Rm}|^2 \eta^{s+4} d\mu + C_1 \| \operatorname{Rm} \|_{2, 
 \eta >0}^2 + C_2 \| p  \|^2_{2, \eta >0}, 
\end{equation}
where constant $C_1$ depends on $n, a, s_0, \| \operatorname{Rm} (\cdot, t) \|_{\infty,
 \eta >0}, s, \Lambda$ and  constant $C_2$ further depends on $ \| p(t) \|_{\infty, \eta >0}$.
 
\end{lemma}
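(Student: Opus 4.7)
The plan is to mirror the proof of the global estimate (\ref{eq pressure deriv estimate A}) in Lemma \ref{lem interpolation for p deriv via rm}, replacing the global interpolation inequalities (\ref{eq interpol m+1 power}) and (\ref{eq interpolation key}) by their localized counterparts (\ref{eq interpol m only power m+1 power}) and (\ref{eq interpolation key with eta}) from Lemma \ref{cor weight interpol ineq}, and carefully keeping track of the $\eta$-weights generated by integration by parts. The starting point is again the schematic elliptic equation (\ref{eq p elliptic schema version}) and its $\nabla^{a-1}$ derivative (\ref{eq nabla a pf p to be}). I would pair this identity with $\nabla^{a-1} p \cdot \eta^{s+2}$ (note the shift in the exponent, chosen so that after integration by parts the leading term naturally carries $\eta^{s}$ on $|\nabla^a p|^2$) and integrate over $M$.

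Integration by parts on $\int_M \langle -\Delta \nabla^{a-1} p,\, \nabla^{a-1} p\rangle \eta^{s+2}\,d\mu$ produces the main term $\int_M |\nabla^a p|^2 \eta^{s+2}\,d\mu$ plus a commutator term $\int_M \nabla^a p * \nabla^{a-1} p * \nabla \eta \cdot \eta^{s+1}\,d\mu$, which by Cauchy--Schwarz and the bound $|\nabla \eta|\leq \Lambda$ is absorbed in half of the leading term plus a multiple of $\int_M |\nabla^{a-1} p|^2 \eta^s\,d\mu$; the latter is controlled by (\ref{eq interpolation key with eta}) applied with $W = p$. The $s_0$-term on the right is treated identically. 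Thus modulo a factor of $1/2$ in front of the leading piece, the analysis reduces to estimating the curvature--pressure products on the right-hand side of (\ref{eq nabla a pf p to be}), tested against $\nabla^{a-1} p$ against the weight $\eta^{s+2}$.

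Each of those products is handled by H\"older's inequality as in the global case, with one factor being of the form $\nabla^{i}\operatorname{Rm}$ or $\nabla^{i} p$ and the weight split via $\eta^{s+2} = \eta^{\alpha_1} \cdot \eta^{\alpha_2} \cdots$ so that each resulting integral has weight $\geq 2a$ (to apply Lemma \ref{cor weight interpol ineq}). For the curvature factors, (\ref{eq interpol m only power m+1 power}) gives a bound by $\epsilon \int_M |\nabla^{a+2}\operatorname{Rm}|^2 \eta^{s+4}\,d\mu + C\|\operatorname{Rm}\|_{\infty,\eta>0}^{N}\|\operatorname{Rm}\|_{2,\eta>0}^2$ for suitable $N$. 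For the intermediate pressure factors $\nabla^b p$ with $1 \leq b \leq a-1$, (\ref{eq interpolation key with eta}) with $W = p$ bounds them by $\epsilon \int_M |\nabla^a p|^2 \eta^s\,d\mu + C \|p\|_{2,\eta>0}^2$, so they are partially absorbed into the leading left-hand side.

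The main technical obstacle will be the bookkeeping of $\eta$-exponents: every time interpolation (\ref{eq interpolation key with eta}) is invoked on a partial derivative of $\operatorname{Rm}$ it produces a \emph{higher} power of $\eta$ on the $|\nabla^{a+2}\operatorname{Rm}|^2$ term, so the weight splits in the H\"older steps must be arranged so the final exponent never exceeds $s+4$, exactly as in the proof of \cite[Theorem 4.4]{St13}. Once that bookkeeping is done, one chooses $\epsilon$ small enough to absorb the $\nabla^a p$ terms back into the left-hand side and collects the constants; the ones depending only on $\|\operatorname{Rm}\|_{\infty,\eta>0}$, $n, a, s_0, s, \Lambda$ form $C_1$, while the terms carrying factors of $\|p\|_{\infty,\eta>0}$ (from the Young's-inequality step applied to mixed products $\nabla^{a-b-1}\operatorname{Rm} \ast \nabla^b p \ast \nabla^{a-1} p$) are grouped into $C_2$, yielding (\ref{eq pressure deriv estimate eta s ver}).
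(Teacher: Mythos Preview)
Your proposal is correct and follows essentially the same route as the paper's proof: multiply (\ref{eq nabla a pf p to be}) by $\nabla^{a-1}p$ against a power of $\eta$, integrate by parts, control the $\nabla\eta$ boundary terms using $|\nabla\eta|\leq\Lambda$, and then estimate every remaining product via H\"older and the localized interpolation inequalities of Lemma~\ref{cor weight interpol ineq}. One small slip: integration by parts on $\int_M\langle -\Delta\nabla^{a-1}p,\nabla^{a-1}p\rangle\eta^{s+2}\,d\mu$ leaves the exponent $s+2$ (not $s$) on the principal term $|\nabla^a p|^2$, so your shift is unnecessary---the paper simply multiplies by $\eta^s$, which gives the stated left-hand side directly; since the lemma is for all $s\geq 2a$ this is only a cosmetic relabeling.
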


\begin{proof} 
We start with multiplying equation (\ref{eq nabla a pf p to be}) by $\nabla^{a-1} p 
\cdot \eta^s$ and using integration by parts. 
Note that the $\nabla \eta $ term is bounded  $\Lambda$. 
To finish the estimate we need to use  interpolation inequalities in 
Lemma \ref{cor weight interpol ineq},
and to be careful with the power of $\eta$ when applying H\"{o}lder inequality.
\end{proof}

\vskip .2cm
Next we establish a differential inequality about $\frac{d}{dt} \int_M |\nabla^a \operatorname{Rm} 
|^2 \eta^s d \mu$.

\begin{proposition} \label{prop curv derv L2 evol ineq cutoff}
We  assume that  $|\nabla^2 \eta (x) |_{g(t)}\leq \Lambda$. 
For $a \in \mathbb{N}$ and  $s \geq 2a$ we have
\begin{align} 
\frac{d}{dt} \int_M |\nabla^a \operatorname{Rm}|^2  \eta^s  d \mu + (2- \epsilon)
 \int_M | \nabla^{a+2} \operatorname{Rm}|^2  \eta^{s}  d \mu \leq  C_1 \|
  \operatorname{Rm} \|_{2, \eta >0}^2 + C_2 \| p \|^2_{2, \eta >0},
   \label{eq L2 estimate new cutoff}
\end{align}
where constants $C_1$ and $C_2$ are as in Lemma \ref{lem 4.6 localized}.
\end{proposition}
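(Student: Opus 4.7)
The plan is to mirror the proof of Proposition \ref{prop curv derv L2 evol ineq}, but carrying the cutoff weight $\eta^{s}$ throughout the computation and replacing each global interpolation/integration-by-parts step by its localized counterpart from Lemmas \ref{lem cut off funct deriv est}, \ref{cor weight interpol ineq}, and \ref{lem 4.6 localized}. Since $\eta$ is time-independent (only its norms $|\nabla\eta|_{g(t)}, |\nabla^{2}\eta|_{g(t)}$ depend on $t$, and these are uniformly bounded by $\Lambda$ by hypothesis), the time derivative passes through $\eta^s$ cleanly and only acts on the tensor and on the measure.

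First, I would differentiate to obtain
\[
\frac{d}{dt}\int_{M}|\nabla^{a}\mathrm{Rm}|^{2}\eta^{s}\,d\mu
= 2\int_{M}\langle \partial_{t}(\nabla^{a}\mathrm{Rm}),\nabla^{a}\mathrm{Rm}\rangle\,\eta^{s}\,d\mu + \int_{M}(\partial_{t}g)*(\nabla^{a}\mathrm{Rm})^{*2}\,\eta^{s}\,d\mu,
\]
then substitute (\ref{eq evol nabla rm cbf}) for $\partial_{t}(\nabla^{a}\mathrm{Rm})$. The $\partial_{t}g$ error term is pointwise bounded by $C(\|\mathrm{Rm}\|_{\infty,\eta>0}+\|p\|_{\infty,\eta>0})|\nabla^{a}\mathrm{Rm}|^{2}\eta^{s}$ since $\partial_{t}g=2(n-2)(B(g)+pg)$, which after applying (\ref{eq interpolation key with eta}) with $i=a$ is absorbed into $\epsilon\int|\nabla^{a+2}\mathrm{Rm}|^{2}\eta^{s+4}\,d\mu + C\|\mathrm{Rm}\|_{2,\eta>0}^{2}$, and the first summand is in turn absorbed on the left since $\eta^{s+4}\le\eta^{s}$.

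The central step is to integrate by parts twice in the term $-2\int\langle\nabla^{a}\mathrm{Rm},\Delta^{2}\nabla^{a}\mathrm{Rm}\rangle\eta^{s}\,d\mu$, which produces
\[
-2\int_{M}|\nabla^{a+2}\mathrm{Rm}|^{2}\eta^{s}\,d\mu + \int_{M}B_{3}^{2+2a}(\mathrm{Rm})\,\eta^{s}\,d\mu + \mathcal{E}_{\eta},
\]
where $\mathcal{E}_{\eta}$ collects all the commutator terms in which at least one derivative falls on $\eta^{s}$. Each such term has the schematic form $\int \nabla^{i}\mathrm{Rm}*\nabla^{j}\mathrm{Rm}*\nabla^{k}\eta^{s}\,d\mu$ with $k\in\{1,2\}$ and $i+j+k = 2a+3$, and is pointwise bounded using $|\nabla\eta|,|\nabla^{2}\eta|\le\Lambda$ by an expression of the form $C\Lambda\,|\nabla^{i}\mathrm{Rm}||\nabla^{j}\mathrm{Rm}|\eta^{s-k}$ with $i+j\le 2a+2$; Lemma \ref{cor weight interpol ineq}(ii) then absorbs $\mathcal{E}_{\eta}$ into $\epsilon\int|\nabla^{a+2}\mathrm{Rm}|^{2}\eta^{s+2}\,d\mu + C\|\mathrm{Rm}\|_{2,\eta>0}^{2}$, again using $\eta^{s+2}\le\eta^{s}$.

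The remaining low-order contributions in (\ref{eq evol nabla rm cbf})—namely the pure curvature terms $B_{4}^{2a}(\mathrm{Rm})$ arising after one integration by parts of $B_{2}^{2+a}$, and the $p$-coupled terms $\sum_{b=1}^{a}\nabla^{b}p*\nabla^{a-b}\mathrm{Rm}$ together with $\nabla^{a}(T(\nabla^{2}p))$—are handled exactly as in the global case but with $\eta^{s}$ carried through. The pure curvature terms are absorbed by (\ref{eq interpol m only power m+1 power}). For the pressure terms, after H\"older and (\ref{eq Ham82 12.6})-type weighted analogs one obtains terms of the form $\int|\nabla^{c}p|^{2}\eta^{\tilde s}\,d\mu$ with $c\le a+2$ and $\tilde s\ge 2c$, which Lemma \ref{lem 4.6 localized} bounds by $\epsilon\int|\nabla^{a+2}\mathrm{Rm}|^{2}\eta^{\tilde s+4}\,d\mu + C_{1}\|\mathrm{Rm}\|_{2,\eta>0}^{2}+C_{2}\|p\|_{2,\eta>0}^{2}$; once more $\eta^{\tilde s+4}\le \eta^{s}$ permits absorption into the leading negative term. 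Collecting all contributions yields (\ref{eq L2 estimate new cutoff}).

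The main obstacle is the careful bookkeeping of $\eta$-powers: each integration by parts or interpolation step either lowers or raises the exponent of $\eta$, and one must verify that every residual weighted-$\nabla^{a+2}\mathrm{Rm}$ term sits at exponent $\ge s$ (so absorption works) while every weighted-$p$ or lower-order $\mathrm{Rm}$ term sits at exponent $\ge 2a$ (so Lemmas \ref{cor weight interpol ineq} and \ref{lem 4.6 localized} apply). Choosing $s\ge 2a$ in the hypothesis is precisely what makes this bookkeeping go through, and choosing $\epsilon$ small enough at each step (with the number of applications depending only on $a$ and $n$) keeps the coefficient of $\int|\nabla^{a+2}\mathrm{Rm}|^{2}\eta^{s}$ on the left above $2-\epsilon$.
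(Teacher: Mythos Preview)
Your approach matches the paper's: differentiate with the weight $\eta^{s}$, integrate by parts in the $\Delta^{2}$-term to produce the good $-2\int|\nabla^{a+2}\mathrm{Rm}|^{2}\eta^{s}$ plus boundary terms carrying $\nabla\eta$, $(\nabla\eta)^{2}$, $\nabla^{2}\eta$, and absorb everything via Lemma~\ref{cor weight interpol ineq} and Lemma~\ref{lem 4.6 localized}. The paper's proof is in fact slightly more schematic than yours.

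There is, however, one step that fails as written. You claim the metric-variation term $\int_{M}(\partial_{t}g)*(\nabla^{a}\mathrm{Rm})^{*2}\eta^{s}\,d\mu$ is pointwise bounded by $C(\|\mathrm{Rm}\|_{\infty,\eta>0}+\|p\|_{\infty,\eta>0})|\nabla^{a}\mathrm{Rm}|^{2}\eta^{s}$ ``since $\partial_{t}g=2(n-2)(B(g)+pg)$.'' This is false: by \eqref{eq Bach definition} the Bach tensor is schematically $\nabla^{2}\mathrm{Rm}+\mathrm{Rm}*\mathrm{Rm}$, so $|\partial_{t}g|$ is \emph{not} controlled by $\|\mathrm{Rm}\|_{\infty}+\|p\|_{\infty}$. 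The fix is immediate and is what the paper actually does in \eqref{eq wejust added it}: the $B$-part of $\partial_{t}g$ contributes terms of the form $\nabla^{2}\mathrm{Rm}*(\nabla^{a}\mathrm{Rm})^{2}$ and $\mathrm{Rm}*\mathrm{Rm}*(\nabla^{a}\mathrm{Rm})^{2}$, which are absorbed into the schematic expressions $B_{3}^{2+2a}(\mathrm{Rm})$ and $B_{4}^{2a}(\mathrm{Rm})$ already present, and then handled by \eqref{eq interpol m only power m+1 power}; only the $pg$-part gives the zero-order $p|\nabla^{a}\mathrm{Rm}|^{2}$ term (and $\partial_{t}d\mu$, being $n(n-2)p\,d\mu$ since $B$ is trace-free, contributes likewise). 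With this correction your argument goes through. A minor bookkeeping slip: in your $\mathcal{E}_{\eta}$ description the total order should be $i+j+k=2a+4$ (not $2a+3$), so the $k=1$ boundary terms carry $B_{2}^{2a+3}(\mathrm{Rm})$ as in the paper's display; these are still absorbed by Cauchy--Schwarz and \eqref{eq interpolation key with eta}.
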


\begin{proof} 
The proof is similar to Proposition \ref{prop curv derv L2 evol ineq}, here we only give a
rough sketch. From equation (\ref{eq evol nabla rm cbf}) and that  $\eta$ is independent of $t$, 
we have
\begin{align*}
& \frac{d}{dt} \int_M |\nabla^a \operatorname{Rm} |^2 \eta^s d \mu   + 2 \int_M 
\left | \nabla^{a+2}  \operatorname{Rm}  \right |^2 \eta^s  d \mu\\
=& \int_M B_3^{2+2a} (\operatorname{Rm}) \eta^s  d \mu   + \int_M B_4^{2a}
 (\operatorname{Rm}) \eta^s d \mu + \int_M  \nabla \eta  * B_2^{2a+3}
  (\operatorname{Rm})\eta^{s-1}  d \mu \\
& + \int_M  (\nabla \eta)^2 * B_2^{2a+2} (\operatorname{Rm}) \eta^{s-2}
  d \mu + \int_M  \nabla^2 \eta *  B_2^{2a+2} (\operatorname{Rm}) \eta^{s-1} 
   d \mu +  \int_M p  |\nabla^a \operatorname{Rm}|^2 \eta^s  d \mu\\
&+ \sum_{b =1}^{a}   \int_M \nabla^a \operatorname{Rm}*\nabla^b p * 
\nabla^{a-b} \operatorname{Rm} \eta^s  d \mu\notag + \int_M  \nabla^a 
\operatorname{Rm} * \nabla^a (T(\nabla^{2}p)) \eta^s  d \mu,
\end{align*}
where we have applied integration by parts to $\int_M \langle\nabla^a \operatorname{Rm} , 
\Delta^2 (\nabla^a  \operatorname{Rm}) \rangle \eta^s  d \mu$.

Let 
\[
L_{\epsilon} \doteqdot \epsilon\int_M | \nabla^{a+2} \operatorname{Rm}|^2  \eta^{s}
  d \mu+  C_1 \| \operatorname{Rm} \|_{2,\eta >0}^2 + C_2 \| p \|^2_{2, \eta >0}.
\]
By the localized interpolation inequalities in Lemma \ref{cor weight interpol ineq},
 $p(t) \leq  \| p(t) \|_{\infty, \eta >0}$, and $|\nabla \eta | \leq \Lambda$, 
we conclude that all of the terms on the right-hand-side of the above display
 are bounded by $(\Lambda + 1) L_{\epsilon}$ 
except those terms which contain function $p$. For them,
by Cauchy-Schwarz inequality and the localized interpolation inequalities, we have
\begin{align*}
\int_M \left ( \left | \ \nabla^a \operatorname{Rm} * \nabla^{a+2} p \eta^s \right | + 
\left | \nabla^a \operatorname{Rm}*\nabla^b p * \nabla^{a-b} \operatorname{Rm} 
\eta^s  \right | \right )d \mu 
\leq  L_{\epsilon} +   C(\epsilon, \Lambda) \int_M \left | \nabla^{a}p
 \right |^2 \eta^s d \mu.
\end{align*}
 Note that by Lemma \ref{lem 4.6 localized} we have $\int_M \left | \nabla^{a}p
  \right |^2 \eta^s d \mu \leq L_{\epsilon}$, hence inequality (\ref{eq L2 estimate 
  new cutoff}) follows from combining the inequalities above.
  \end{proof}

\vskip .2cm
Finally we give a proof of

\begin{theorem} \label{thm improved Shi curvature derivative conseq}
Let $(M^n, g(t), p(t))_{t \in [0, T ]}$ be a local smooth solution of CBF (\ref{eq cbf eq p ellipt}). 
Fix $r > 0$ and $q \in M$ we assume ball $B_{g(0)}(q, 2r ) $ is precompact 
in $M$ and 
\begin{equation} \label{eq assum partial t derv g RM p}
\sup_{B_{g(0)}(q, 2r) \times [0,T ]} \left ( |\operatorname{Rm}(\cdot,\cdot)| + |p(\cdot)| 
+|\partial_t g| + |\nabla \partial_t g | \right ) \leq K
\end{equation}
for some constant $K>0$. Then for $m \in \mathbb{N}$ we have
\[
\int_{ B_{g(0)}(q,r )}| \nabla^m \operatorname{Rm}(\cdot, t) |^2 d \mu
  \leq \frac{C} {t^{m/2} },
\]

where $C$ is a constant which depends on  $n, m, r, g(0), T, K$, and 
\[
\sup_{t \in [0,T ]}  \left ( \| \operatorname{Rm} (\cdot, t) \|^2_{L^2(B_{g(0 )}(p,2r ))}
+  \| p (t) \|^2_{L^2(B_{g(0 )}(p,2r ))} \right ).
\]
\end{theorem}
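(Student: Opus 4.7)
The plan is to mirror the argument that proves the global Shi-type $L^2$-estimate (Theorem \ref{thm int Shi curvature derivative est cbf}), but with the localized evolution inequality of Proposition \ref{prop curv derv L2 evol ineq cutoff} in place of its global counterpart, together with a well-chosen cutoff function adapted to the initial ball $B_{g(0)}(q,2r)$.

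First, I would fix a smooth cutoff $\eta \in C^{\infty}(M)$ with $0 \leq \eta \leq 1$, $\eta \equiv 1$ on $B_{g(0)}(q,r)$, $\operatorname{supp}\eta \subset B_{g(0)}(q,2r)$, and $|\nabla \eta|_{g(0)} + |\nabla^{2}\eta|_{g(0)} \leq C(n,r)$. By Lemma \ref{lem cut off funct deriv est}, combined with the hypothesis (\ref{eq assum partial t derv g RM p}) on $|\partial_{t}g|$ and $|\nabla \partial_{t}g|$, the quantities $|\nabla \eta|_{g(t)}$ and $|\nabla^{2}\eta|_{g(t)}$ remain controlled by a constant depending only on $K, r, T$; so the parameter $\Lambda$ appearing in Proposition \ref{prop curv derv L2 evol ineq cutoff} is uniform in $t$, and the constants $C_{1}, C_{2}$ there are uniform because $\|\operatorname{Rm}\|_{\infty,\eta>0}$ and $\|p\|_{\infty,\eta>0}$ are bounded by $K$.

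Next, fix the power $s=4m$ (so that the hypothesis $s \geq 2a$ of Proposition \ref{prop curv derv L2 evol ineq cutoff} holds with $a = 2j$ for every $0 \leq j \leq m$) and introduce the weighted cumulative energy
\begin{equation*}
f_{m}(t) \doteqdot \sum_{j=0}^{m} \frac{t^{j}}{j!} \int_{M} \bigl|\nabla^{2j} \operatorname{Rm}(\cdot,t)\bigr|_{g(t)}^{2} \, \eta^{4m} \, d\mu_{g(t)}.
\end{equation*}
Differentiating $f_{m}$ and applying Proposition \ref{prop curv derv L2 evol ineq cutoff} with $a = 2j$ and $s = 4m$, the good term $-(2-\epsilon)\frac{t^{j}}{j!}\int |\nabla^{2j+2}\operatorname{Rm}|^{2}\eta^{4m}\,d\mu$ generated at level $j$ absorbs the bad term $\frac{t^{j}}{j!}\int |\nabla^{2j+2}\operatorname{Rm}|^{2}\eta^{4m}\,d\mu$ produced when the time-prefactor $\frac{t^{j+1}}{(j+1)!}$ of level $j+1$ is differentiated. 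This telescoping is essentially the same as the one used to obtain (\ref{eq fk ineq for shi L2 name}) in the proof of Theorem \ref{thm int Shi curvature derivative est cbf}, and it yields
\begin{equation*}
\frac{d}{dt} f_{m}(t) \leq C \left( \|\operatorname{Rm}(\cdot,t)\|^{2}_{L^{2}(\operatorname{supp}\eta)} + \|p(t)\|^{2}_{L^{2}(\operatorname{supp}\eta)} \right),
\end{equation*}
with $C$ depending on $n, m, r, g(0), T, K$. Integrating this from $0$ to $t \in (0,T]$, bounding $f_{m}(0)$ and the integrand on the right by the supremum quantities in the statement, and using $\eta \equiv 1$ on $B_{g(0)}(q,r)$, gives
\begin{equation*}
\frac{t^{m}}{m!} \int_{B_{g(0)}(q,r)} |\nabla^{2m} \operatorname{Rm}|^{2} \, d\mu_{g(t)} \leq f_{m}(t) \leq C',
\end{equation*}
which is the desired bound for even orders. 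For odd orders, proceed as at the end of the proof of Theorem \ref{thm int Shi curvature derivative est cbf}: the Cauchy--Schwarz interpolation $|\nabla^{2k+1}\operatorname{Rm}|^{2} \leq |\nabla^{2k}\operatorname{Rm}|\,|\nabla^{2k+2}\operatorname{Rm}|$, applied on a slightly enlarged cutoff region and combined with the already-proven even estimates at orders $2k$ and $2k+2$, yields the $t^{-(2k+1)/2}$ decay.

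The main obstacle, and what forces the localized setup to be more intricate than the global one, is the bookkeeping of the powers of $\eta$. One must choose $s$ large enough that the hypothesis $s \geq 2a$ of Proposition \ref{prop curv derv L2 evol ineq cutoff} is satisfied simultaneously at every level $j \leq m$, and one must also ensure that the "good" parabolic term at level $j$ dominates the "bad" telescoping term at level $j+1$ after all applications of H\"older's inequality and the local interpolation inequalities of Lemma \ref{cor weight interpol ineq}. The uniform choice $s = 4m$ sidesteps this by making all relevant weighted integrals involve the same power of $\eta$, but a more refined choice (e.g.\ $s_{j}$ increasing in $j$) would also work and is what will ultimately matter if one needs sharper dependence on $r$.
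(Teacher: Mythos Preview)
Your proposal is correct and follows essentially the same route as the paper: the paper likewise fixes a cutoff supported in $B_{g(0)}(q,2r)$, controls its derivatives along the flow via Lemma \ref{lem cut off funct deriv est}, feeds Proposition \ref{prop curv derv L2 evol ineq cutoff} into a telescoping cumulative energy, and then interpolates for odd orders. The only differences are cosmetic---the paper uses a level-dependent exponent $\eta^{4j+2}$ together with inductively chosen constants $\beta_j$ in place of your uniform $\eta^{4m}$ and factorial weights $1/j!$---and one slip in your write-up: the interpolation $|\nabla^{2k+1}\operatorname{Rm}|^{2} \leq |\nabla^{2k}\operatorname{Rm}|\,|\nabla^{2k+2}\operatorname{Rm}|$ is not a pointwise inequality but only holds after integration by parts (with the cutoff), as at the end of the proof of Theorem \ref{thm int Shi curvature derivative est cbf}.
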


\begin{proof} First we show the estimate for even $m = 2a$.
Let $\eta$ denote a smooth cutoff function defined by 
\[ 
\eta(x) = \begin{cases}  & 0 \quad \text{if } x \notin B_{g(0)}(q, 2r), \\
& \in [0,1] \quad \text{if } x \in B_{g(0)}(q, 2r) \setminus B_{g(0)}(q, r), \\
& 1  \quad \text{if } x \in B_{g(0)}(q, r).
\end{cases}
\]
Obviously $\eta$ satisfies $|\nabla \eta |_{g(0)} + | \nabla^2 \eta |_{g(0)} \leq C(r, g(0))$.
 By Lemma \ref{lem cut off funct deriv est} and (\ref{eq assum partial t derv g RM p}) we have
\[
\sup_{t \in [0,T ]} |\nabla \eta |_{g(t)} + | \nabla^2 \eta |_{g(t)} \leq 
 \Lambda (n, T, K, r, g(0)).
\]

Let $\beta_a =1$ and let $\beta_j, j=0,1, \cdots, a-1$, be nonnegative constants
 to be determined below. We define
\[
f_a(t) \doteqdot \sum^a_{j=0} \beta_{j} t^{j} \int_M  | \nabla^{2j}
\operatorname{Rm}(\cdot, t) |^2 \eta^{4j+2} d \mu .
\]
It follows from Proposition \ref{prop curv derv L2 evol ineq cutoff} 
with $\epsilon = \frac{1}{2}$ that
\begin{align*}
\frac{d}{dt} f_a(t) \leq & \sum^a_{j=1}\int_M  | \nabla^{2j}\operatorname{Rm}(
\cdot, t) |^2 \eta^{j+2} d \mu\cdot \left ( -  \frac{3}{2}  \beta_{j-1}t^{j-1}+ C_3 (j)
 \beta_{j} t^{j}  + j \beta_{j} t^{j -1} \right )  \\
& + C \left (1+  \| \operatorname{Rm} (\cdot, t) \|_{\infty, \eta >0}^2  +\|p(t)\|_{\infty, \eta >0} 
\right ) \| p \|^2_{2, \eta >0} 
\end{align*}
where 
\[
C_3(j) = C_1(j) \| \operatorname{Rm}(\cdot, t) \|_{2, 
\eta >0}^2 + C_2(j) \| p (t) \|^2_{2, \eta >0}
\]
and $C_1(j)$ and $C_2(j)$ are given in Proposition \ref{prop curv derv L2 evol ineq cutoff}.
 It is clear that by an appropriate inductive choice of the constants $\beta_i$ starting from  
 $\beta_a =1$ we obtain
\[
\frac{d}{dt}f_a(t)  \leq C \left (1+  \| \operatorname{Rm} (\cdot, t) \|_{\infty, \eta >0}^2 +
\|p(t)  \|_{\infty, \eta >0} \right )  \| p \|^2_{2, \eta >0}.
\]
Integrating this ODE  we get the required estimate for even $m =2a$.

For odd $m=2a-1$ the estimate follows from the interpolation inequality and  the even case.
 \end{proof}

\begin{remark} \label{rk about part t nabla g bound}
From the CBF equation (\ref{eq cbf eq p ellipt})  we know that $|\partial_t g| +
 |\nabla \partial_t g | $ in  (\ref{eq assum partial t derv g RM p}) can be bounded by assuming 
$$ \max_{a=0,1, \cdots, 3} \sup_{B_{g(0)}(q, 2r) \times [0,T ]} |\nabla^a
\operatorname{Rm} (\cdot, \cdot)| \leq K.
$$
\end{remark}

\subsection{ Shi's type pointwise-estimate}\label{sec appl Shi type est better}
Recall that in the proof of Theorem \ref{thm when extension CBF possible} 
we use the assumption of Sobolev constant's bound to get $C^0$-bound of the derivatives
 of curvatures. In Theorem \ref{thm ptwise Shi est for CBF-intro}  we remove the assumption. 
Now we give a proof of the theorem which is similar to that of  \cite[Theorem 4.4]{St13}.

\vskip .1cm
We define function 
\[
f_m(x, t, g) \doteqdot \sum^m_{ j =1} \left | \nabla^{j} \operatorname{Rm}(x,t) 
\right |^{ \frac{2}{2+j}} \quad \text{ on }M \times (0,T]. 
\]
We claim that for all sufficiently large $m \geq 3$ there is a constant $C = C(n, m)$ such that
\[
f_m(x, t, g) \leq C \left ( K +  t^{- \frac{1}{2}} \right ). 
\]
The claim implies the theorem and we will prove it by a contradiction argument.

Suppose the claim is false, then there is a sequence $\{(M_i^n , g_i (t), p_i(t))\}_{
t \in [0,T_i]}$  of complete  solutions to CBF (\ref{eq cbf eq p ellipt}) 
 with constant scalar curvature $s_{0i}$
 which satisfy the assumption of the theorem, together with points $(x_i , t_i )$  such that 
\[
\lim_{i \to \infty} \frac{f_m(x_i , t_i , g_i )}{K + t_i^{- \frac{1}{2}}} = \infty.
\]
Without loss of generality we may choose the points $(x_i , t_i )$ such that 
\begin{equation} \label{eq subsec 64 tem us 1}
\frac{f_m(x_i , t_i , g_i )}{K + t_i^{- \frac{1}{2}}} \geq \frac{1}{2} \sup_{M_i
 \times (0,T_i ]}  \frac{f_m(x, t, g_i )}{K + t^{- \frac{1}{2}}}. 
\end{equation}
Let constants
\[
\lambda_i \doteqdot f_m(x_i , t_i, g_i) \quad \text{ and } \quad  \tilde{s}_{0i} =
 \lambda_i^{-1} s_{0i}.
\]
We define the scaled metrics and potential functions by
\[
\tilde{g}_i (\tilde{t}) \doteqdot \lambda_i g \left ( t_i + \frac{\tilde{t}}{\lambda^2_i}\right ),  
\quad \tilde{p}_i( \tilde{t}) =\lambda^{-2}_i p_i \left ( t_i + \frac{\tilde{t}}{\lambda^2_i} \right ).
\]

By the scaling property of CBF $(\tilde{g}_i (\tilde{t}), \tilde{p}_i(\tilde{t}))$ satisfies CBF  
(\ref{eq cbf eq p ellipt}) with constant scalar curvature $\tilde{s}_{0i}$,
 and  exists on $\tilde{t} \in [-t_i \lambda^2_i , 0]$. 
 These solutions have the following simple properties.

\vskip .1cm
\noindent (P1) By the definition of $f_m$ we have
\begin{equation*}
f_m(x, \tilde{t}, \tilde{g}_i) = \lambda_i^{-1} f_m \left (x, t_i +
 \frac{\tilde{t}}{\lambda_i^2}, g_i \right ),
\end{equation*}
hence  $f_m(x_i , 0, \tilde{g}_i) = 1$.

\vskip .1cm
\noindent (P2)  Note that
\[
t^{\frac{1}{2}}_i \lambda_i = \frac{f_m(x_i , t_i, g_i )}{t_i^{-\frac{1}{2}}} \geq
 \frac{f_m(x_i , t_i, g_i )}{K + t_i^{-\frac{1}{2}}} \to \infty,
\]
we conclude that the solutions  $(\tilde{g}_i(\tilde{t}), \tilde{p}_i(\tilde{t}))$ exist on $[-1, 0]$ 
for $i$ sufficiently large.

\vskip .1cm 
\noindent (P3) 
By assumption (\ref{eq curv bdd improv ptwise shi assump}) we have that for any $(x,\tilde{t}) 
\in M_i \times [-1,0]$

$$\left | \operatorname{Rm}_{\tilde{g}_i} (x, \tilde{t}) \right 
| \leq\frac{K}{\lambda_i} \to 0  \quad \text{ as } i \to \infty.  $$

\vskip .1cm 
\noindent (P4) From (P1) we have that for $(\tilde{x} , \tilde{t} ) \in M_i \times [-1, 0]$
\begin{align*}
f_m(\tilde{x}, \tilde{t} , \tilde{g}_i)  = \frac{f_m \left (\tilde{x} ,t_i +\frac{\tilde{t}}
 {\lambda_i^2}, g_i \right )} {\lambda_i} = \frac{f_m \left (\tilde{x} ,t_i +
 \frac{\tilde{t}}{\lambda_i^2}, g_i \right )}{f_m(x_i ,t_i, g_i )} \leq  2 \cdot 
 \frac{K + \left (t_i + \frac{\tilde{t}}{\lambda_i^2} \right )^{-\frac{1}{2}}}{ K
 +t_i^{-\frac{1}{2}} } \leq 3,
\end{align*}
where we have used (\ref{eq subsec 64 tem us 1}) and (P2) to get the first inequality above.

\vskip .1cm
Let metric  $\tilde{h}_i (\tilde{t})$, defined on some ball $B(0, 2r) \subset
 \mathbb{R}^n$ with $\tilde{t} \in[-1,0]$, be a local lifting of $\tilde{g}_i (\tilde{t})$
  near $x_i$ by exponential map $\exp_{x_i}^{\tilde{g}_i (0)}$. By (P4) and
   (\ref{eq curv bdd improv ptwise shi assump}) we have a uniform bound of curvatures
\[
\sum_{j=0}^m| \nabla^{j}_{\tilde{h}_i (\tilde{t})} \operatorname{Rm}_{\tilde{h}_i
 (\tilde{t})} (\tilde{x})|_{\tilde{h}_i (\tilde{t})}  \leq C \quad  \text{ for } \tilde{x} \in B(0, 2r).
\]
This implies that metrics $\tilde{h}_i (\tilde{t})$ are uniformly equivalent to Euclidean 
metric on $B(0, 2r)$. Hence by Cheeger-Gromov  compactness theorem of Riemannian manifolds the
 sequence $\{ \tilde{h}_i (\tilde{t}) \}$ sub-converges to $\tilde{h}_{\infty}(\tilde{t})$
  in $C^{m-2-\alpha}$-topology with $\tilde{t} \in [-1,0]$. 
  We can improve the convergence of $\{\tilde{h}_i (\tilde{t}) \}$ to 
  $C^{\infty}$-convergence for $\tilde{t} \in [-\frac{1}{2},0]$ by
   using Theorem \ref{thm improved Shi curvature derivative conseq}. 
   Note that by (\ref{eq curv bdd improv ptwise shi assump}) and the scaling property
    we have uniform upper bounds for
\[
\sup_{(\tilde{x},\tilde{t}) \in B(0, 2r) \times [-1, 0]} | \tilde{p}_i(\tilde{x}, 
\tilde{t})|\quad \text{ and } \quad \| \tilde{p}_i ( \tilde{t} ) \|_{L^2(B(0,2r ))}.
\]
By Remark \ref{rk about part t nabla g bound} and (P4)  we may apply Theorem
\ref{thm improved Shi curvature derivative conseq} to $\tilde{h}_i (\tilde{t})$ on 
$B(0,2r ) \times [-1,0]$ to conclude that for each fixed $a \in \mathbb{N}$

\[
\int_{ B_{(0,r )}}| \nabla^a_{\tilde{h}_i (\tilde{t})} \operatorname{Rm}_{\tilde{h}_i 
(\tilde{t})}(\cdot) |^2 d \mu_{\tilde{h}_i (\tilde{t})}
\]
has a uniform upper bound.

Since metrics $\tilde{h}_i (0)$ are uniformly equivalent to the Euclidean metric
 on $B(0, r )$, it follows that the Sobolev constants for metric $\tilde{h}_i (0)$ on 
 $B(0, 2r )$ are uniformly bounded. As in the proof of Theorem \ref{thm when 
 extension CBF possible} it follows that the $C^a$-norms of the curvature of 
 $\tilde{h}_ i (0)$ on $B(0, \frac{1}{2}r )$ are uniformly bounded  for all $a$. 
 Thus by taking a further sub-sequence we conclude that $\tilde{h}_ i (0)$ is 
sub-convergent to $\tilde{h}_{\infty}(0)$ in  $C^\infty$,
 hence $f_m(0, 0, \tilde{h}_{\infty}) = 1$ from  (P1). 
 However, $\operatorname{Rm}_{\tilde{h}_{\infty}} = 0$ on $B(0, \frac{1}{2}r )$,
we get a contradiction and the theorem is proved.


\bibliographystyle{natbib}

\end{document}